\documentclass[10pt]{amsart}
\pdfoutput=1

\usepackage{amsmath}
\usepackage{amssymb}
\usepackage{amsthm}
\usepackage{mathtools}
\usepackage{verbatim}
\usepackage{float}
\usepackage{enumerate}
\usepackage{bm}
\usepackage{algorithm}
\usepackage{algpseudocode}
\usepackage[export]{adjustbox}

\usepackage{tikz}
\usepackage{subcaption}
\usetikzlibrary{hobby}
\usetikzlibrary{fadings}
\usetikzlibrary{positioning}
\usetikzlibrary{arrows.meta, bending, decorations.markings}
\tikzset{
    base/.style = {rectangle, rounded corners, draw=black, minimum width=4cm, minimum height=1cm, text centered, font=\sffamily},
    box/.style = {rectangle, rounded corners, minimum width=5cm, minimum height=2cm, text centered, draw=gray, fill=black!15},
    header/.style={
    label={[rectangle, fill=white, draw, anchor=center, minimum width=2cm, node font=\ttfamily, name=\tikzlastnode-header]north:{#1}}}}
\tikzstyle{arrow} = [very thick, ->, >=stealth]

\usepackage{shortcuts}

\begin{document}

\author{Linhang Huang}
\email{lhhuang@uw.edu}
\pagestyle{plain}
\title{Electrostatic Skeletons and\\ Condition of Strict Descent}

\begin{abstract}
Given a precompact domain $\Omega \subseteq \R^2$, the electrostatic skeleton of $\Omega$ is defined as a positive measure inside $\Omega$, supported on a set with no simple loops, which generates $\partial \Omega$ as an equipotential curve. Eremenko conjectured that every convex polygon admits a unique electrostatic skeleton. This conjecture has since been proven for triangles and regular polygons. In this paper, we will prove the conjecture for quadrilaterals with a line of symmetry using arguments from conformal geometry. We will also discuss a natural condition that implies the existence of electrostatic skeletons.
\end{abstract}

\maketitle

\vspace{-0pt}

\section{Introduction and Results}
\subsection{Introduction}
Given a finite, compactly supported positive Borel measure $\mu$, the \textit{logarithmic potential} of $\mu$ is defined as $U^\mu: \C \to (-\infty,\infty]$ where
\begin{equation*}
    U^\mu (z) = \int \log \frac{1}{|z-w|} ~d\mu(w).
\end{equation*} 
   
An electrostatic skeleton is a positive measure supported on a tree inside the polygon that recovers the Green's function with its potential and, in particular, it produces the polygon as a level set of its potential. A formal definition is as follows. 
 
\begin{quote} \textbf{Definition.} A  positive measure $\mu$ is the \textit{electrostatic skeleton} for a precompact domain $\Omega$ if $\mu$ generates the same logarithmic potential as the equilibrium measure $\mu_E$ outside $\Omega$ and $\supp \mu$ is inside $\overline \Omega$ and contains no simple loops.
\end{quote}

\begin{figure}[h!]
    \begin{subfigure}{.45\textwidth}
      \centering
      \includegraphics[width=0.9\linewidth]{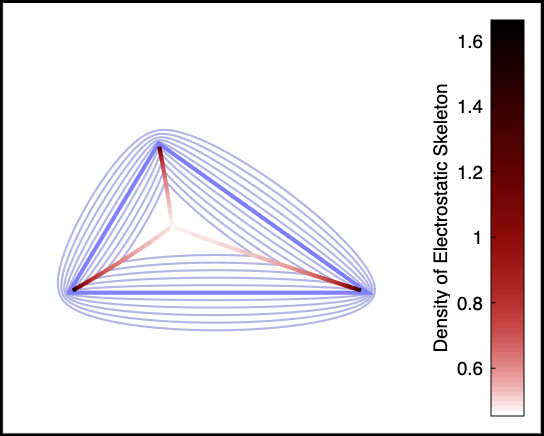}
    \end{subfigure}%
    \begin{subfigure}{.45\textwidth}
      \centering
      \includegraphics[width=0.9\linewidth]{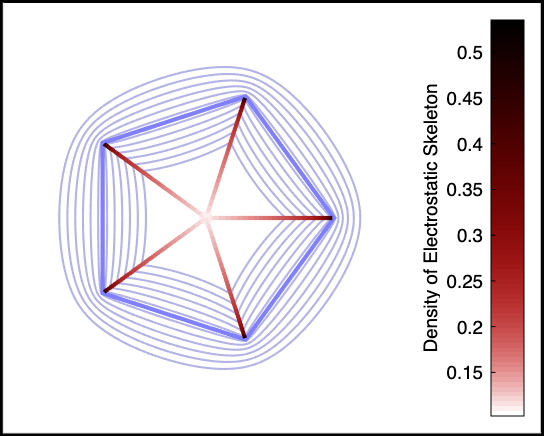}
    \end{subfigure}
    \caption{Electrostatic skeletons for a triangle and the regular pentagon, with level sets of the logarithmic potentials in blue.}
    \label{fig:tri_penta}
\end{figure}

Note that the condition $U^{\mu}|_{\Omega^C} = U^{\mu_E}|_{\Omega^C}$ is equivalent to requiring that $-U^\mu - \log\mathrm{Cap}(\Omega)$ is the Green's function of $\Omega$ on $\C \backslash \Omega$. The question of understanding which sets have an electrostatic skeleton was first raised by E.B. Saff in 2003 \cite{Lundberg2013lemniscate}. Saff asked whether every convex polygon admits an electrostatic skeleton. Since then,  Lundberg and Ramachandran \cite{Lundberg2015electrostatic} proved the existence and uniqueness of electrostatic skeleton for triangles (and also simplices in higher dimensional generalization) using the subharmonic extension of the Green's function (see also \cite{eremenko2013electrostatic}).  The existence result for regular polygons was proven by Lundberg and Totik \cite{Lundberg2013lemniscate}, who also mentioned its relation to the concept of \textit{lemniscate growth}. In particular, they show that if the support of the skeleton does not intersect the boundary of the domain, then domain can be shrunk in a way the preserve the Green's function. In a sense, the electrostatic skeleton outlines the maximum domain of conformal invariance. In an example of this phenomenon is for the ellipse which can be shrunk to the linear segment between foci. The example skeletons for a triangle and a regular pentagon are plotted in Figure~\ref{fig:tri_penta}.\\

The task of finding the electrostatic skeleton is essentially the one of finding fictitious charges on a tree-like subset inside the domain which can replace the equilibrium charges. This relates to the classic technique in the study of electrostatics called the \textit{method of image charges} \cite{Jeans_2009}. In the context of potential theory, the measure for the electrostatic skeleton would generate the equilibrium measure as the \textit{balayage} on the boundary of the domain. A similar question with potentials generated by Lebesgue measure was also considered by Gustafsson in \cite{Gustafsson1998mother}. Electrostatic skeletons have also been shown by Díaz, Saff and Stylianopoulos \cite{mina2005zero} to relate to Bergman orthogonal polynomials.

\begin{figure}[h!]
    \begin{subfigure}{.5\textwidth}
      \centering
      \includegraphics[width=0.95\linewidth]{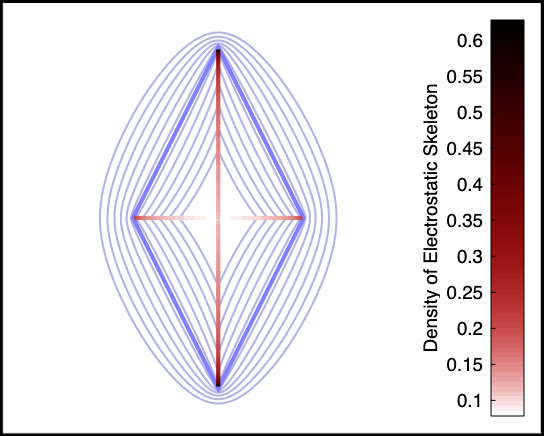}
    \end{subfigure}%
    \begin{subfigure}{.5\textwidth}
      \centering
      \includegraphics[width=0.95\linewidth]{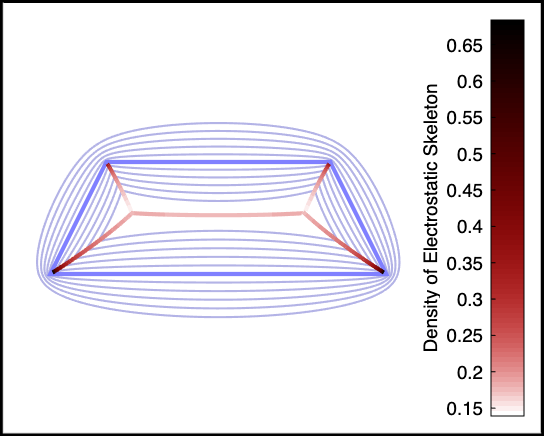}
    \end{subfigure}
    \caption{Electrostatic skeletons for a kite shape and an isosceles trapezoid, with level sets of the logarithmic potentials in blue}
    \label{fig:kite_trape}
\end{figure}

\subsection{Geometric Results}
We prove the existence of electrostatic skeletons for symmetric convex quadrilaterals: we will first show that the convex kite shapes (convex quadrilaterals symmetric with respect to one of their diagonals)  admit electrostatic skeletons (see Fig.~\ref{fig:kite_trape}).

\begin{theorem}\label{theo_kite} Each convex kite shape admits an electrostatic skeleton.
\end{theorem}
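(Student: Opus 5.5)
We will build the skeleton from the analytic continuation of the Green's function across $\partial\Omega$, taking the descent-line approach of Lundberg--Ramachandran for triangles. Let $\Omega$ be a convex kite symmetric about the diagonal joining the vertices $A$ and $C$, and let $B,D$ be the other two vertices. Let $\phi:\C\setminus\overline\Omega\to\C\setminus\overline{\mathbb D}$ be the exterior Riemann map with $\phi(\infty)=\infty$. Then $g=\log|\phi|$ is the Green's function with pole at infinity. By Schwarz reflection across each side of the polygon, $g$ continues harmonically into $\Omega$. On the side $S$ this continuation is the function $-g\circ R_S$, where $R_S$ is reflection in the line containing $S$. The candidate potential inside $\Omega$ is
\[
u(z)=\max_{S}\bigl(-g(R_S z)\bigr)
\]
over the four sides, that is, the largest of the reflected continuations. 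This $u$ is subharmonic as a maximum of harmonic functions. It agrees with $-g$ near each side provided the correct branch wins there. We then set $\mu=\frac{1}{2\pi}\Delta u$ on $\Omega$, together with the symmetric extension across the boundary. The measure $\mu$ is supported on the ``ridge'' set where two branches tie, and $-U^\mu-\log\mathrm{Cap}$ reproduces $g$ outside $\Omega$ because the continuation is $C^1$ across $\partial\Omega$. So the two things to verify are: (i) near each side $S$, the branch $-g\circ R_S$ is the maximal one, so that $u$ is continuous with $g$ across $\partial\Omega$ and produces no mass on $\partial\Omega$; and (ii) the tie set is a tree.

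Step (i) rests on a monotonicity property of $g$ along inward normals, a ``condition of strict descent''. We need to show that $-g\circ R_S$ is strictly decreasing as one moves from $S$ into $\Omega$ along normal lines, and that the branches compare correctly; the latter amounts to showing that the regions where each branch dominates are the Voronoi-like cells bounded by curves emanating from the vertices. We would prove this using the Schwarz--Christoffel representation
\[
\phi^{-1}(w)=c\int^w \prod_{k}(1-\bar w_k\zeta)^{\alpha_k}\,\frac{d\zeta}{\zeta^2}.
\]
The symmetry places the prevertices of $A$ and $C$ at $\pm1$ and those of $B$ and $D$ at conjugate points. Convexity gives $|\nabla g|$ a monotonic behaviour along each side: $|\phi'|$ increases away from each vertex (with angle $<\pi$) toward the side's interior and then decreases. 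This pins down the direction of steepest descent of the reflected branches.

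For step (ii) we use the symmetry axis $AC$. By symmetry the tie set is symmetric, and the tie between the two sides adjacent at $A$ lies on the axis, as does the tie at $C$. The tie curves starting from $B$ and $D$ are the bisector-like level curves $\{g\circ R_{S_1}=g\circ R_{S_2}\}$ of the two adjacent branches. Each is a real-analytic arc starting at the vertex, since the gradients differ there. We will show that each arc meets the axis exactly once, by a monotonicity argument along the arc: the difference of the two other branches changes sign once. This gives a tree consisting of a segment of the axis with two symmetric legs toward $B$ and $D$, possibly degenerating to a single point when a four-way tie occurs. The measure on each arc has density $\frac{1}{2\pi}|\partial_n(u_1-u_2)|>0$, so it is positive.

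We expect the main obstacle to be step (i): showing that the reflected continuation $-g\circ R_S$ stays well defined and single-valued on the whole region where it is supposed to dominate. It is also the hardest place to prove the strict descent estimate, since reflections across different sides have singularities at the reflected images of other corners, and we must rule out those entering the relevant cell. Our first attempt would be to confine each cell to the triangle formed by the side and the angle bisectors at its endpoints, which lies in $\Omega$. Then analyticity follows from reflecting the exterior across that side only. We would then prove descent there via the argument of $\phi'$, in the way done for triangles, and use the kite symmetry to reduce the number of cases to two types of sides.
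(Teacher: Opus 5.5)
\textbf{Genuine gap: step (ii).} The gap comes from your choice of $u$ as the \emph{global} maximum of all four reflected branches. With that choice, the support of $\Delta u$ is the whole set where the maximum is attained twice. Identifying it as ``diagonal plus two legs'' therefore requires comparing every pair of branches, including branches of opposite sides. In the cell touching side $AB$, the branch of $AB$ must beat the branch of the opposite side $CD$ as well as its neighbours. Also, the tie curve leaving $B$ must never re-enter that cell across the axis after it first reaches the axis. Otherwise pieces of the non-adjacent tie sets enter the support, and nothing rules out loops. Your sketch (``the difference of the two other branches changes sign once'') gives no mechanism for either point. The paper itself has only numerical evidence that the global maximum has a tree-like ridge set for general quadrilaterals.

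A more basic input is also missing. You need that the tie set of the two branches meeting at $B$ is a single arc that leaves $B$, never returns to the two lines through $B$, and therefore must cross the diagonal $AC$. ``The gradients differ there'' is not available. $B$ is a vertex of both reflected polygons, so both branches are singular at $B$; indeed $|\nabla g|$ blows up at a convex corner. And a local statement at $B$ would not give the global crossing anyway.

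\textbf{How the paper handles both points, with no descent condition.} Lemma~\ref{no_branch} supplies the global structure of the tie set.
\begin{itemize}
\item Take $z\neq B$ on one boundary ray of the wedge at $B$. The two branches equal $-g(z)$ and $-g(z')$, where $z'$ is the mirror image of $z$ in the other line. Since $\partial\Omega=\operatorname{supp}\mu_E$ lies on $z$'s side of that line, $U^{\mu_E}(z)>U^{\mu_E}(z')$, so $g(z)<g(z')$ and the tie set avoids the wedge boundary.
\item A Phragm\'en--Lindel\"of argument excludes nodal domains of the difference that miss the wedge boundary.
\item Hence the tie set is one analytic arc from $B$ to $\infty$. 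It first meets the segment $AC$ at some point $c$, and by symmetry the arc from $D$ meets $AC$ at the same $c$.
\end{itemize}
Then, instead of the global maximum, define the extension \emph{piecewise}. Cut $\Omega$ by these two arcs and the diagonal into four regions, and on each region use the branch of the side it touches. Subharmonicity becomes a purely local check along each arc: there the extension equals the maximum of the two adjacent branches, because the two sides of the arc lie in different nodal regions. Add removability at $c$ and at the vertices, and the tree property is built in.

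\textbf{Step (i) is not an obstacle.} Each $-g\circ R_S$ is harmonic on the whole open half-plane bounded by the line of $S$ that contains $\Omega$, because $R_S\overline{\Omega}$ lies in the other closed half-plane. So reflected corners can never enter $\Omega$. Near the interior of $S$, this branch dominates simply because the other branches are values of $-g$ at points off $\overline{\Omega}$, hence negative, while it vanishes on $S$. No Schwarz--Christoffel or monotonicity estimates are needed for kites.
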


Our second result deals with a second family of quadrilaterals: we consider the isosceles trapezoid where the line of symmetry does not pass through any vertex and show that all these shapes admit electrostatic skeletons.

\begin{theorem} \label{theo_trap} Each isosceles trapezoid admits an electrostatic skeleton.
\end{theorem}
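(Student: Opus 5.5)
We will follow the standard strategy behind the triangle case of Lundberg and Ramachandran: construct a subharmonic extension of the Green's function of the complement into the trapezoid whose Laplacian is a positive measure on a tree. Let $P$ be the isosceles trapezoid, $g$ the Green's function of $\widehat{\C}\setminus \overline P$ with pole at $\infty$, and $\phi:\widehat{\C}\setminus\overline{\mathbb D}\to\widehat{\C}\setminus\overline P$ the exterior Riemann map, given by a Schwarz--Christoffel formula
\begin{equation*}
\phi'(w)=C\prod_{j=1}^4\Bigl(1-\frac{w_j}{w}\Bigr)^{\alpha_j},
\end{equation*}
with $\alpha_j=1-\theta_j/\pi\in(0,1)$ the exterior angle parameters. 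Since $P$ is symmetric about one axis, the prevertices $w_j$ can be chosen symmetric under complex conjugation, and only one real parameter remains free. On each side of $P$, the normal derivative $\partial_n g$ is real-analytic, and the Schwarz reflection of $g$ across that side gives a harmonic continuation $-g$ into $P$. The candidate potential inside $P$ is
\begin{equation*}
u=\max_k\bigl(-g\circ R_k\bigr),
\end{equation*}
where $R_k$ is reflection in the $k$-th side, suitably continued. More intrinsically, we continue $g$ across each edge and take the maximum of the four branches. Then $u$ is subharmonic, as a maximum of harmonic functions. It agrees with the extension across each edge near that edge, and its Laplacian lives on the set where two branches tie. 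That set is a union of analytic arcs, and the masses on it are positive because maxima of harmonic functions have positive jumps in normal derivative. It remains to show that this tie set is a tree (no loops) in $\overline P$, and that each branch continues harmonically to the whole region where it is the maximum.

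The key steps, in order, are as follows.
\begin{enumerate}
\item Describe the analytic continuation of $g$ across each side through $\phi$. The continuation across a side corresponds to reflecting $w\mapsto 1/\bar w$ on the arc of $\partial\mathbb D$ between consecutive prevertices. Write $F_k$ for the resulting branch of $-g$ in $P$.
\item Use the symmetry to reduce to a half trapezoid. There are two parallel sides, bottom $B$ and top $T$, and two congruent legs $L,L'$ exchanged by the symmetry.
\item Show that the ``strict descent'' property holds: along rays moving inward from each side, the branch $F_k$ decreases strictly until it meets another branch. This is the natural condition advertised in the abstract. We plan to state it as a general criterion: if each continued branch has no critical points in its dominance region, then the tie set is a forest attached to the vertices and the measure is a skeleton.
\item Verify this criterion for trapezoids by tracking level curves in the $w$-plane, where the branches are explicit integrals of the Schwarz--Christoffel density.
\end{enumerate}

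The expected skeleton shape, as in Fig.~\ref{fig:kite_trape}, consists of bisector-like arcs from the four vertices. The two arcs from the bottom corners meet at a point on the axis, the two arcs from the top corners meet at another point on the axis, and a central segment of the axis joins these two points. We will show that the two legs' regions never meet across the axis except along this central segment. This is where symmetry is used: on the axis, $F_L=F_{L'}$. The tie set between $F_B$ and $F_T$ must therefore cross the axis at exactly one point or segment, and we will show that segment is nondegenerate with positive density.

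The main obstacle is step 4: proving that the continued branches remain harmonic, with no branch-point singularity of the continuation, throughout their dominance regions, and that no loop forms, uniformly over the one-parameter family of trapezoids. My first attempt is a conformal monotonicity argument. Each tie arc is a level curve of $F_j-F_k$. The harmonic function $F_j-F_k$ vanishes on the bisector-type curve from a vertex and has constant sign on each subdomain, so by the maximum principle a tie loop would enclose a region where $F_j-F_k$ is harmonic with zero boundary values. That forces $F_j\equiv F_k$, which is a contradiction. For the singularity issue, we will locate the singularities of each continuation as reflections of the other prevertices. Using the ordering of angles in a trapezoid (two acute angles, two obtuse angles, with equal angles paired), we will show these reflected points lie outside the region where that branch is maximal. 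If a direct estimate fails, we will fall back on a continuity argument in the shape parameter. The regular case (the square) and the degenerate limits (toward a triangle) are known, and we will show that the tree topology cannot change along the family, because a change would require a critical point of some $F_j-F_k$ on the axis, which strict descent excludes.
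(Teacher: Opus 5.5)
\textbf{Gap 1: the tree property is never proved.} Your plan never shows that the tie set of $u=\max_k F_k$, or of any extension you build, contains no loop. That is the one nontrivial fact.

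Your maximum-principle argument only excludes a loop lying inside a single zero set $\{F_j=F_k\}$; this is the content of Lemma~\ref{no_branch}. A loop in the tie set of a maximum of four functions is in general made of arcs of several different $S_{i,j}$. On the region it encloses, no single difference $F_j-F_k$ vanishes on the whole boundary, so nothing is contradicted.

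Your step~3 criterion does not close this. $F_k=-g\circ R_k$ has no critical points in the half-plane bounded by $\ell_k$ containing $P$, and it decreases strictly along every inward ray from $L_k$. This follows because $g=\log|f|$ with $f$ conformal and its level curves are convex (Lemma~\ref{convex}). So your hypothesis holds for every convex polygon, and as a general criterion it would prove existence of skeletons for all convex polygons, which is open.

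What actually matters is strict monotonicity of the common value $g_i=g_j$ along the tie curve, i.e.\ $\nabla g_i\not\parallel\nabla g_j$ on $S_{i,j}$. The paper proves this for the leg--base curves on the vertex's side of the axis (Lemma~\ref{trape}):
\begin{enumerate}
\item a critical point of $g_1$ along $S_{1,2}$ there forces a tangency of the convex curves $g_1^{-1}(c)$ and $g_2^{-1}(c)$;
\item its mirror image is a tangency of $g_1^{-1}(c)$ and $g_4^{-1}(c)$;
\item gluing the three level arcs gives a symmetric $C^1$ convex curve starting and ending outside $\Omega$, which is impossible.
\end{enumerate}
This monotonicity makes ``the first meeting point of $S_{1,2}$ and $S_{2,3}$'' meaningful. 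The paper then assembles the tree by hand from pieces of the $S_{i,j}$ and defines the extension piecewise. Subharmonicity uses only the local nodal-domain structure from Lemma~\ref{no_branch}, never the claim that the extension is the global maximum, which the paper regards as hard in general. Your step~4, which you rightly call the crux, remains an unexecuted plan.

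\textbf{Gap 2: the expected shape covers only one of two cases, so the continuity fallback fails.}
\begin{enumerate}
\item If $S_{1,2}$ and $S_{2,3}$ reach the axis $\ell$ before meeting, the central piece lies on $\ell$ (part of $S_{2,4}$, the tie between the legs), as you predict.
\item If they meet first at a point $w$ off the axis, then $g_1=g_2=g_3$ at $w$. The central piece is then an arc of $S_{1,3}$, the tie between the parallel sides, crossing $\ell$ at a single point.
\end{enumerate}
Both cases occur: tall narrow and short wide nearly rectangular trapezoids realize them, with the square as the transition where four arcs meet at the center. So the topology does change within the family, and no argument can show it stays constant along a deformation from the square or from the triangle limit.

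\textbf{A non-issue.} The singularities you worry about do not exist. $R_k$ maps the open half-plane bounded by $\ell_k$ that contains $P$ off $\overline P$. Hence $F_k=-g\circ R_k$ is harmonic on that entire half-plane, and no analysis of reflected prevertices is needed.
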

For proofs of both Theorems~\ref{theo_kite} and \ref{theo_trap}, we leverage heavily with the line of symmetry. Thus it is hard to generalize the methods for a generic convex polygon. 

\subsection{Structural Results} \label{notation}
We now discuss the main result: all convex polygons that satisfy what we call the "strict descent" condition admit electrostatic skeletons. First note that the Green's function $g$ of a polygon $\Omega$ is a harmonic function defined on the outside of the polygon. The defining property of $g$ is that it is a non-negative function with logarithmic growth outside the polygon and vanishes on the edges of the polygon. Given an edge $[a,b]$ of the polygon, we can reflect the Green function using Schwarz reflection with respect to each side by setting \begin{equation*}
    \tilde{g}(z) = -g\left(a + \frac{b-a}{\overline{b-a}}(\overline{z-a})\right).
\end{equation*}

Given a convex polygon $\Omega$, we denote its vertices as $z_1,\dots,z_n \in \mathbb{C}$ and edges which are finite line segments $L_1,\dots,L_n$ (see Figure~\ref{fig:show}) such that \begin{equation*}
    L_i \cap L_{i+1} = \set{z_i}. 
\end{equation*} We shall denote the Schwarz reflection of $g$ with respect to $L_i$ by $g_i$. We denote the line containing the line segment $L_i$ by $\ell_i$. Note that given each pair of $g_i$ and $g_j$, we can extend both functions to the component of $\C \backslash (\ell_i \cup \ell_j)$ that contains $\Omega$. We denote the closure of this component (in $\C$) by $W_{i,j}$. Note that both $g_i$ and $g_j$ can be further extended continuously to $\partial W_{i,j}$ (on $\ell_i$ for instance, we have $g_i(z) = -g(z)$). We set \begin{equation*}
    S_{i,j} := \set{z\in W_{i,j}: g_i(z) = g_j(z)}.
\end{equation*}

\begin{figure}[h!]
    \centering
    \begin{tikzpicture}[scale=0.7]
        \node[inner sep=0pt] (julia) at (0,0){\includegraphics[width=0.5\linewidth]{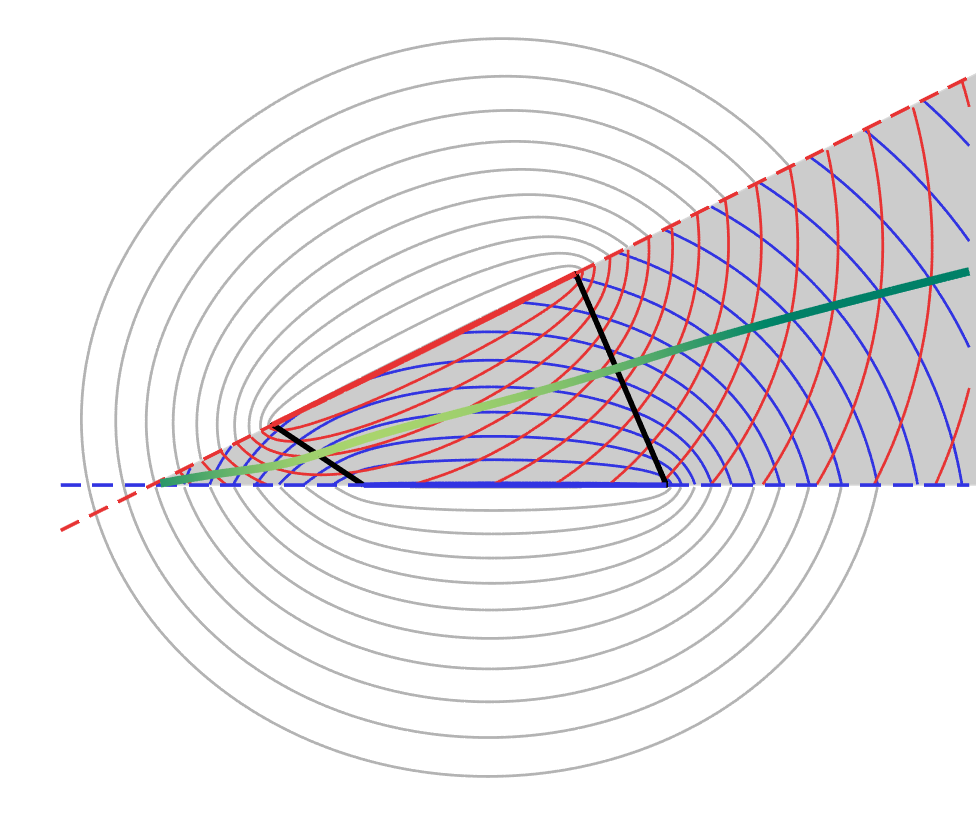}};
        \node[red,fill=white] at (-0.7,1.3) {\Large {$L_3$}};
        \node[blue,fill=white] at (0.5,-1.2) {\Large {$L_1$}};
        \node[fill=white] at (-1,-1.2) {\Large $z_4$};
        \node[fill=white] at (2,-1.2) {\Large $z_1$};
        \node[fill=white] at (0.6,1.8) {\Large $z_2$};
        \node[fill=white] at (-2.3,0.4) {\Large $z_3$};
        \node[right, teal!80!white] at (4.5,1.4) {\large $S_{1,4}$};
        \node[right, fill=black!20!white] at (3,0.4) {\large $W_{1,4}$};
        \node[red] at (-4.2,-1.3) {\large $\ell_3$};
        \node[blue] at (-4.2,-0.5) {\large $\ell_1$};
    \end{tikzpicture}
    \caption{Level sets of $g_1$ and $g_3$ are highlighted in red and blue respectively. The wedge $W_{1,4}$ is gray area between $\ell_1$ and $\ell_3$ that contains the polygon.}
    \label{fig:show}
\end{figure}

\begin{definition}[Condition of Strict Descent]\label{con_sd1}
   We say a convex polygon $\Omega$ satisfies \textit{the strict descent condition} if  given any two distinct Schwarz reflections $g_i$ and $g_j$, for any $z\in S_{i,j}$ such that $\nabla g_i(z)$ is parallel to $\nabla g_j(z)$, we have $\nabla g_i(z) \cdot \nabla g_j(z) < 0$. 
\end{definition}

The motivation for the strict descent condition is that we want to construct the electrostatic skeleton on where different Schwarz reflections coincide. Loosely speaking, this condition implies that we can build the skeleton by looking at level sets of the Schwarz reflection with increasingly smaller value. 

\begin{theorem}[Main Result] \label{theo_descent}
    Each polygon that satisfies the strict descent condition admits an electrostatic skeleton. Moreover, the support of the skeleton is piecewise analytic and consists of at most $2n-3$ analytic curves and the measure is equivalent to $\mathcal{H}^1$ on the support.
\end{theorem}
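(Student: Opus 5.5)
The plan is to build a subharmonic extension $G$ of the Green's function $g$ into $\Omega$ and then take $\mu = \frac{1}{2\pi}\Delta G$ restricted to $\Omega$. The candidate is
\[
G(z) = \min_{i} g_i(z), \qquad z\in \Omega,
\]
with $G = g$ outside $\Omega$. This is the same idea used for triangles via the subharmonic extension of Green's function.

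The first task is to check that each $g_i$ is defined and harmonic on all of $\Omega$, and that $G$ glues continuously with $g$ across $\partial\Omega$. Convexity places $\Omega$ in every wedge $W_{i,j}$, and along $L_i$ the function $g_i$ equals $-g=0$. We also need the right minimum to be attained near each edge: near $L_i$ the function $g_i$ is negative with inward normal derivative $-\partial_n g$, and the other $g_j$ are larger there. This should follow from comparing reflections near edges using the monotonicity of $g$ along rays from the polygon, and from the fact that $\partial_n g>0$ on open edges.

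If $G$ is a minimum of harmonic functions, it is superharmonic, so the signs need care. The skeleton must be a positive measure with $U^\mu$ matching $U^{\mu_E}$ outside $\Omega$. That requires $-G$ (extending $-g-\log\mathrm{Cap}$) to be superharmonic, i.e.\ $G$ subharmonic. Since $g_i<0$ inside near the edges, the correct choice is therefore the \emph{maximum}, $G=\max_i g_i$. This is subharmonic, and its Laplacian is concentrated on the ``corner set'' $\Sigma=\bigcup_{i\ne j}\{g_i=g_j=G\}$. Gluing with $g$ across $\partial\Omega$ then needs the jump of normal derivatives to be nonnegative. Because $g_i$ reflects $g$, it is $C^1$-matched across $L_i$ (outward derivative of $g$ equals inward derivative of $g_i$), so $\partial\Omega$ carries no mass and $\Delta G$ lives only on $\Sigma\cap\Omega$. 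Green's identity plus the logarithmic growth gives total mass $1$ and $U^\mu=-G-\log \mathrm{Cap}(\Omega)$ outside $\Omega$. On each analytic arc of $\Sigma$ the density is $|\nabla(g_i-g_j)|/2\pi$, which is positive away from degenerate points, giving equivalence with $\mathcal H^1$.

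The main obstacle is the structure of $\Sigma$: it must contain no loops, be a finite union of analytic arcs, and number at most $2n-3$. This is where strict descent enters. On $S_{i,j}$ the set $\{g_i=g_j\}$ is a regular analytic curve wherever $\nabla g_i\neq\nabla g_j$. Strict descent says that where the gradients are parallel they point in opposite directions, so the difference is never critical, and moreover $G$ has no interior local minima on $\Sigma$. I would run a descent argument: follow $\Sigma$ in the direction of decreasing $G$ starting from the vertices $z_k$, where $g_k=g_{k+1}=0$ and a branch emanates. The level sets $\{G=t\}$ then shrink as $t$ decreases.

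A loop in $\Sigma$ would bound a region $D\subset\Omega$ on which $G$ is harmonic, equal to a single $g_i$. Then $g_i$ would attain a maximum principle contradiction or an interior extremum of $G$ restricted to $\Sigma$, and strict descent forbids both. The graph is therefore a tree whose leaves are the $n$ vertices. Finiteness of the junction points (triple points of three equal reflections) follows by analyticity and compactness. Euler's formula for a tree with $n$ leaves and all internal vertices of degree $\ge 3$ then gives at most $n-2$ internal vertices, hence at most $2n-3$ edges, i.e.\ analytic curves.

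Proving that every point of $\Sigma$ connects downward to a junction or a global minimum, without accumulation, is the delicate step. I would handle it by a gradient-flow/level-set continuity argument in $t$.
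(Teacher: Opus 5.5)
Your analytic setup is sound. $G=\max_i g_i$ glued to $g$ is continuous and subharmonic: for $z\in L_i^\circ$ and $l\neq i$ one has $g_l(z)=-g(R_l z)<0=g_i(z)$, where $R_l$ is reflection in $\ell_l$, so only $g_i$ is active near open edges. Lemma~\ref{subhar} then gives the right exterior potential, and the density $|\nabla(g_i-g_j)|/2\pi$ is positive.

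\textbf{The gap.} The claim that $\Sigma=\supp\Delta G$ has no loop is not established, and it is the whole content of the theorem. Your argument rules out extrema of $g_l$ on the boundary of a bounded face $D=\{G=g_l\}^\circ$ only at smooth points of $\partial D$. There, an extremum along an arc of $S_{l,j}$ is a critical point, hence equals $m_{l,j}$; that point is a strict maximum along $S_{l,j}$, and $\nabla g_l$ points into $\{g_l>g_j\}$ there, so it can be neither the top nor the bottom of $D$.

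The extrema can instead sit at triple points where $g_j=g_k=g_l$. There the three gradients are pairwise non-parallel, and strict descent says nothing. For example, take $\nabla g_l(q)=(0,-1)$, $\nabla g_j(q)=(\tfrac{\sqrt3}{2},-\tfrac32)$, $\nabla g_k(q)=(-\tfrac{\sqrt3}{2},-\tfrac32)$.
\begin{itemize}
\item The point $q$ is then a vertex $\beta_{j,k}$ of the level loop $\{G=-t\}$ at which a previously inactive $g_l$ enters.
\item $g_l$ decreases into its newly created face.
\item If that face is later pinched off at another vertex, its boundary is a simple loop in $\Sigma$ carrying positive mass.
\end{itemize}
In this picture $G|_\Sigma$ has no local extremum at either junction, because $\Sigma$ continues upward through the top and downward through the bottom. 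So the principle that a loop forces an extremum forbidden by strict descent is false.

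For the same reason, your proposed level-set continuity argument would need the level sets of $\max_i g_i$ to remain regular loops. The paper explicitly says it cannot prove this for $g_{\max}$. Your route would therefore prove more than the theorem, and it needs an idea you have not supplied.

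\textbf{The missing idea: abandon the global maximum.} The paper builds a different extension, pointwise no larger than $\max_i g_i$, along the shrinking loops $(t,\bm{\beta},\bm{g})$.
\begin{itemize}
\item On each region it uses only the functions already on the current loop.
\item It switches between adjacent $g_i,g_j$ along $\beta_{i,j}$ on the correct nodal side (Lemma~\ref{cuts}). The extension is therefore locally $\max(g_i,g_j)$ and stays subharmonic even where some inactive $g_l$ is larger.
\item An inactive function is never reintroduced, which is exactly what excludes the scenario above.
\end{itemize}
The rest of the argument runs as follows. Lemma~\ref{inward} and Proposition~\ref{breakdown} keep the loops regular until they collapse or split along a non-crossing partition of the index tuple. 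Each step is a balayage (Lemma~\ref{replace1}). The retraction of Lemma~\ref{replace2} keeps the support simply connected. Corollary~\ref{tri} gives termination. The bound $2n-3$ counts the $n$ curves $\gamma_{i,i+1}$ plus at most $n-3$ diagonals of the partition. Your Euler-formula count would reproduce $2n-3$, but only once the tree property is known.
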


Theorem~\ref{theo_descent} implies that the existence of electrostatic skeleton can be reduced to a somewhat explicit analytic property of the Green's function of the polygon. The bound $2n-3 = n + (n-3)$ comes from a correspondence between the shape of the skeleton and a partition of $n$-gon. Thus the number of arcs that are not rooted at the vertices is bounded by the number of non-crossing matching (see Figure~\ref{fig:hepta}).

We argue that the strict descent condition is quite natural and should hold for generic polygons. The author has not found any convex polygon that does not satisfy the strict descent condition. This suggests the following strenghtening of the existing conjecture (see above).

\begin{conjecture} \label{conj_descent}
    All convex polygons satisfy the strict descent condition.
\end{conjecture}

Whether or not a convex polygon satisfies the strict descent condition is a property of its Riemann map: as such, it can be explicitly computed for any arbitrary example. While, in principle, being completely explicit (using only the exterior Schwarz-Christoffel mapping and Schwarz reflection), we have been unable to deal with the problem at a general level.

\begin{figure}[h!]
    \begin{subfigure}{.55\textwidth}
      \centering
      \begin{tikzpicture}[scale=1]
          \node at (0,0) {\includegraphics[width=0.95\linewidth]{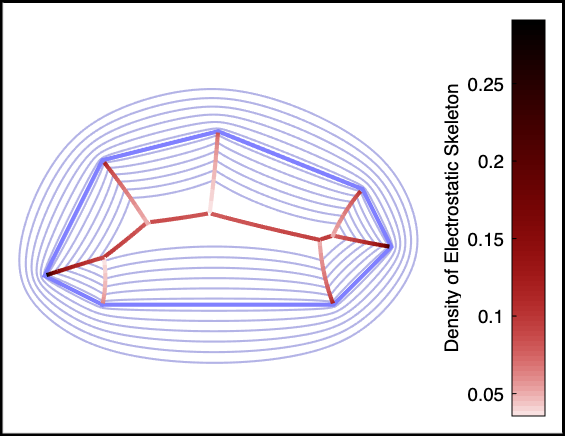}};
          \node[fill=white] at (-0.8,-1.4) {$L_1$};
          \node[fill=white] at (1.3,-1) {$L_2$};
          \node[fill=white] at (1.5,0.3) {$L_3$};
          \node[fill=white] at (0.3,1.1) {$L_4$};
          \node[fill=white] at (-1.5,1.3) {$L_5$};
          \node[fill=white] at (-2.8,0.4) {$L_6$};
          \node[fill=white] at (-2.7,-1.3) {$L_7$};
      \end{tikzpicture}
      
    \end{subfigure}
    \begin{subfigure}{.4\textwidth}
    \centering
      \begin{tikzpicture}[line width = 1.5]        
      \draw[red] ({2*cos(360/7)},{2*sin(360/7)}) -- ({2*cos(360/7*6)},{2*sin(360/7*6)});
        \draw[red] ({2*cos(360/7)},{2*sin(360/7)}) -- ({2*cos(360/7*5)},{2*sin(360/7*5)});
        \draw[red] ({2*cos(360/7*2)},{2*sin(360/7*2)}) -- ({2*cos(360/7*5)},{2*sin(360/7*5)});
        \draw[red] ({2*cos(360/7*3)},{2*sin(360/7*3)}) -- ({2*cos(360/7*5)},{2*sin(360/7*5)});
          \draw ({2*cos(0)},{2*sin(0)}) -- ({2*cos(360/7)},{2*sin(360/7)}) --
          ({2*cos(360/7*2)},{2*sin(360/7*2)}) --
          ({2*cos(360/7*3)},{2*sin(360/7*3)}) --
          ({2*cos(360/7*4)},{2*sin(360/7*4)}) --
          ({2*cos(360/7*5)},{2*sin(360/7*5)}) --
        ({2*cos(360/7*6)},{2*sin(360/7*6)})  -- cycle;
        \node[right] at  ({2*cos(0)},{2*sin(0)}) {$3$};
        \node[above right] at  ({2*cos(360/7)},{2*sin(360/7)}) {$4$};
        \node[above] at  ({2*cos(360/7*2)},{2*sin(360/7*2)}) {$5$};
        \node[left] at  ({2*cos(360/7*3)},{2*sin(360/7*3)}) {$6$};
        \node[left] at  ({2*cos(360/7*4)},{2*sin(360/7*4)}) {$7$};
        \node[below] at  ({2*cos(360/7*5)},{2*sin(360/7*5)}) {$1$};
        \node[below] at  ({2*cos(360/7*6)},{2*sin(360/7*6)}) {$2$};
      \end{tikzpicture}
    \end{subfigure}
    \caption{Electrostatic skeleton for a heptagon and its corresponding partition of (regular) heptagon.}
    \label{fig:hepta}
\end{figure}

\section{Preliminary}
\subsection{Logarithmic Potential and Green's Function}
We review some basic properties of logarithmic potentials that will be used for the proofs. 
We refer to Saff's survey \cite{Saff2010logarithmic} and the book of Saff and Totik \cite{saff1997logarithmic} for further details.

\begin{lemma}[Riesz decomposition, see {\cite[Theorem 2.4]{Saff2010logarithmic}}]\label{log}
    Given a finite, compactly supported Borel measure $\mu$,
    \begin{enumerate}[(a)]
        \item $U^\mu$ is superharmonic. In particular, it is lower-semicontinuous,
        \item $U^\mu$ is finite and harmonic on $\C \backslash \supp \mu$.
    \end{enumerate}
    Conversely, given a superharmonic function $f:\C \to (-\infty ,\infty]$ which is harmonic outside a compact set $K$, there exists a unique positive Borel measure \begin{equation*}
        \mu = -(2\pi)^{-1}\Delta f,
    \end{equation*}
    which supported on $K$ such that $U^\mu - f$ is constant. 
\end{lemma}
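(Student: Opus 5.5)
This is the classical Riesz decomposition. I would prove it by standard potential theory, with the two directions handled separately.

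For (a), I would first show $U^\mu$ is lower semicontinuous. The kernel $w\mapsto \log\frac{1}{|z-w|}$ is bounded below on the compact set $\supp\mu$, uniformly for $z$ in a bounded region. Write $U^\mu$ as the increasing limit of the continuous truncations
\[
U_N^\mu(z) = \int \min\!\left(N,\log\tfrac{1}{|z-w|}\right)d\mu(w).
\]
A monotone limit of continuous functions is lower semicontinuous. Next I would establish the super-mean value inequality. For each fixed $w$, the function $z\mapsto \log\frac{1}{|z-w|}$ is superharmonic, so its circle average about $z$ is at most its value at $z$; in fact the average equals $\min\bigl(\log\frac1r,\log\frac1{|z-w|}\bigr)$. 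Integrating against $d\mu(w)$ and applying Fubini/Tonelli, which is justified by the lower bound on the kernel, gives $\frac{1}{2\pi}\int_0^{2\pi}U^\mu(z+re^{i\theta})\,d\theta\le U^\mu(z)$. Finally, $U^\mu\not\equiv+\infty$, since it is finite off $\supp\mu$ by (b).

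For (b), take $z\notin\supp\mu$. The kernel is then smooth and harmonic in $z$ on a neighborhood of $z$, uniformly in $w\in\supp\mu$. Differentiating under the integral, justified by dominated convergence and the finiteness of $\mu$, shows $U^\mu$ is finite and $C^\infty$ there with $\Delta U^\mu=0$.

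For the converse, given $f$ superharmonic and harmonic off $K$, the distributional Laplacian $-\Delta f$ is a positive distribution, hence a positive Radon measure; its support lies in $K$ because $f$ is harmonic on $\C\setminus K$. So $\mu=-(2\pi)^{-1}\Delta f$ is a finite positive measure supported on $K$. Since $\Delta\log\frac{1}{|z|}=-2\pi\delta_0$, we get $\Delta U^\mu=-2\pi\mu=\Delta f$ in distributions. Thus $h=f-U^\mu$ is harmonic on $\C$ by Weyl's lemma, after adjusting on a polar set; both functions are superharmonic, so they are determined by their values outside a null set.

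The main obstacle is showing $h$ is constant. Outside a large disk, $f$ is harmonic on an annulus extending to infinity, and $U^\mu(z)=-\mu(\C)\log|z|+O(1/|z|)$. I would need to know $f$ has at most logarithmic growth, $f(z)=-c\log|z|+O(1)$, which holds in the intended setting: $f$ harmonic near $\infty$ with a logarithmic pole, as for Green's functions. This forces $c=\mu(\C)$ via the flux $\oint\partial_n f=-2\pi\mu(\C)$. Then $h$ is a bounded entire harmonic function, hence constant by Liouville. Uniqueness follows because any two such measures have the same Laplacian.
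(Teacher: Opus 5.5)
The paper does not prove this lemma; it only cites Saff's survey, so there is no argument in the paper to compare with. Your proof is the standard one and it is correct:
\begin{itemize}
\item the truncations $U^\mu_N$ give lower semicontinuity;
\item the identity $\frac{1}{2\pi}\int_0^{2\pi}\log\frac{1}{|z+re^{i\theta}-w|}\,d\theta=\min\bigl(\log\frac1r,\log\frac1{|z-w|}\bigr)$, together with Tonelli, gives the super-mean-value inequality;
\item in the converse, you correctly pass from $\Delta(f-U^\mu)=0$ to an everywhere identity $f=U^\mu+H$. This works because a superharmonic function is the limit of its area means, so agreement almost everywhere suffices.
\end{itemize}

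Your one departure from the statement is the added growth hypothesis at $\infty$. This is not a gap in your proof but a genuine defect of the statement as written. Take $f(z)=\mathrm{Re}\,z$: it is harmonic (hence superharmonic) on $\mathbb{C}$ and harmonic off any compact $K$. Yet $\mu=-(2\pi)^{-1}\Delta f=0$, and $U^\mu-f=-\mathrm{Re}\,z$ is not constant. So a condition like $f(z)=O(\log|z|)$ is needed. It does hold where the paper actually uses the converse, e.g.\ for $-g_0=-\log|z|+O(1)$ in the proof of Lemma~\ref{subhar}.

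Under that hypothesis you can also drop the flux computation. The function $h=f-U^\mu$ is entire harmonic with $|h(z)|=O(\log|z|)$. The gradient estimate $|\nabla h(z)|\le\frac{2}{R}\sup_{D(z,R)}|h|\to0$ as $R\to\infty$ then shows directly that $h$ is constant, and $c=\mu(\mathbb{C})$ follows for free.
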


Here the Laplacian is defined in the distributional sense as in \cite[Section~3.7]{ransford1995potential}. Given a compact set $D \subseteq \C$ with positive capacity, the Green's function $g:\C \backslash \Int D \to \R$ is the unique harmonic function that vanishes on $\partial D$ and satisfies $g(z) = \log |z| +O(1)$. When $\hat\C \backslash D$ is simply-connected, we have $g(z) = \log|f(z)|$, where $f: \hat\C \backslash D \to \hat \C\backslash \D$ is the unique Riemann mapping function satisfying $f(\infty) = \infty$ and $f'(\infty) > 0$.  

\begin{lemma}[Equipotential Lines for Convex Domains {\cite[Theorem~2.9]{pommerenke1975univalent}}]\label{convex} Let $D$ be a compact convex set and $f:\hat \C \backslash D \to \hat \C \backslash \overline \D$ be the biholomorphic mapping such that $f(\infty) = \infty$ and $f'(\infty) >0$. Then for any $r>1$, the set $\set{z:|f(z)|=r}$, which is the equipotential line of the Green's function, has strictly convex interior.
\end{lemma}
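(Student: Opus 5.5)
The plan is to pass to the inverse map $\psi = f^{-1} : \hat\C\backslash\overline\D \to \hat\C\backslash D$ and to express convexity of the image curves $C_r := \psi(\set{|w|=r})$ through the turning of their tangent. Write $\psi(w) = cw + c_0 + c_1 w^{-1} + \cdots$ with $c>0$, and define the holomorphic function
\begin{equation*}
    h(w) = 1 + \frac{w\psi''(w)}{\psi'(w)}, \qquad |w|>1 .
\end{equation*}
Since $\psi'$ never vanishes (univalence) and $\psi'(\infty)=c\neq 0$, $h$ extends holomorphically to $\infty$ with $h(\infty)=1$. For $w = re^{i\theta}$, the tangent direction of $C_r$ at $\psi(w)$ has argument
\begin{equation*}
    \beta(r,\theta) = \theta + \tfrac{\pi}{2} + \arg\psi'(re^{i\theta}),
\end{equation*}
and a direct computation gives $\partial_\theta \beta(r,\theta) = \operatorname{Re} h(re^{i\theta})$. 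Moreover $\beta(r,\theta+2\pi)-\beta(r,\theta) = 2\pi$ because $\arg\psi'$ is single-valued on $|w|>1$. Hence the lemma reduces to showing $\operatorname{Re} h > 0$ on $|w|>1$. Indeed, $C_r$ is then a smooth Jordan curve whose tangent angle is strictly increasing with total turning $2\pi$. Such a curve bounds a strictly convex region, by the classical fact that a closed curve with monotone tangent angle and total turning exactly $2\pi$ is convex, and strict monotonicity rules out segments on it.

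To get $\operatorname{Re} h \ge 0$ I would use the convexity of $D$ through its boundary. First I would reduce to the case where $\partial D$ is a smooth strictly convex curve, by approximating $D$ from outside with such sets $D_k \downarrow D$ (for example, smoothed outer parallel bodies). By Carathéodory kernel convergence, the exterior maps satisfy $\psi_k \to \psi$ locally uniformly on $|w|>1$, so $h_k \to h$ locally uniformly there as well. In the smooth case, $\psi_k$ extends $C^1$ up to $|w|=1$ (Kellogg–Warschawski). The boundary tangent angle $\beta_k(1,\theta)$ is then nondecreasing in $\theta$ by convexity of $D_k$, so $\operatorname{Re} h_k \ge 0$ on the unit circle, in the sense of a nonnegative measure $d_\theta\beta_k(1,\theta)$.

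Next, $\operatorname{Re} h_k$ is harmonic on $|w|>1$, including at $\infty$, and it is the Poisson integral (for the exterior disk) of this boundary measure. In the smooth case this follows by letting $r\downarrow 1$ in $\partial_\theta\beta_k(r,\cdot)$. Hence $\operatorname{Re} h_k \ge 0$ throughout $|w|>1$. Passing to the limit gives $\operatorname{Re} h \ge 0$.

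Finally, $\operatorname{Re} h$ is a nonnegative harmonic function on the connected set $\hat\C\backslash\overline\D$ with value $1$ at $\infty$. By the minimum principle it cannot attain the value $0$ at an interior point without being identically $0$. Therefore $\operatorname{Re} h>0$ strictly, which gives the strict convexity of each $C_r$, $r>1$, as explained above. Since $\set{z : |f(z)|=r} = C_r$, this is the claim.

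The main obstacle I expect is the boundary step: justifying that nonnegativity of $\operatorname{Re} h$ transfers from the boundary (where $\partial D$ may have corners, as for polygons) to the interior. I would handle this as above, through smooth convex approximation combined with the Poisson representation. An alternative is to work directly with the Herglotz representation of $\log \psi'$, using the fact that the tangent-angle function of a general convex curve is monotone and so defines a positive measure.
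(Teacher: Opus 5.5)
Your proof is correct. The paper gives no argument for this lemma; it is quoted from Pommerenke, so there is no in-paper proof to match. What you give is a self-contained derivation via the classical criterion $\operatorname{Re}\bigl(1+w\psi''(w)/\psi'(w)\bigr)>0$ on $|w|>1$. The following steps are all sound:
\begin{itemize}
\item the reduction, namely $\partial_\theta\beta=\operatorname{Re}h$ and total turning $2\pi$ from single-valuedness of $\arg\psi'$;
\item the outer approximation combined with kernel convergence;
\item the strictness step, which uses $h(\infty)=1$ and the minimum principle.
\end{itemize}

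The one place to tighten is the boundary step. ``Letting $r\downarrow 1$ in $\partial_\theta\beta_k(r,\cdot)$'' needs $h_k$ to be continuous up to $|w|=1$. You can fix this in either of two ways:
\begin{itemize}
\item take $\partial D_k$ of class $C^{2,\alpha}$, so that the minimum principle applies directly; or
\item with only $C^{1,\alpha}$, write the bounded harmonic function $\arg\psi_k'$ as the exterior Poisson integral of its boundary values and integrate by parts. This gives $\operatorname{Re}h_k(re^{i\theta})=\frac{1}{2\pi}\int P_r(\theta-t)\,d\beta_k(1,t)\ge 0$, with $P_r$ the Poisson kernel of the exterior disk.
\end{itemize}

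The paper only ever applies the lemma to convex polygons, and there your ``alternative'' route is immediate. The exterior Schwarz--Christoffel formula is $\psi'(w)=c\prod_k(1-w_k/w)^{\alpha_k}$, with $|w_k|=1$, $\alpha_k\in(0,1)$ and $\sum_k\alpha_k=2$. It gives
\begin{equation*}
\operatorname{Re}\Bigl(1+\frac{w\psi''(w)}{\psi'(w)}\Bigr)=\sum_k\frac{\alpha_k}{2}\operatorname{Re}\frac{w+w_k}{w-w_k}=\sum_k\frac{\alpha_k}{2}\,\frac{|w|^2-1}{|w-w_k|^2}>0,
\end{equation*}
with no boundary regularity theory or approximation needed.

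The two routes trade off as follows. Your smooth-approximation route handles general convex $D$ directly, at the cost of Kellogg--Warschawski. The polygonal route is explicit and uses exactly the Schwarz--Christoffel tools the paper already relies on. It reaches general $D$ by polygonal approximation plus the same kernel-convergence and minimum-principle steps you use.
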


The task of finding an electrostatic skeleton is related to finding a subharmonic extension of $g$ inside the original domain $\Omega$, as indicated by the following Lemma.

\begin{lemma} \label{subhar} 
    Given a Jordan domain $\Omega$ with its Green's function $g:\C\backslash \Omega\to \R$, suppose $\tilde g$ is a subharmonic function over $\C$ such that $\tilde g|_{\C \backslash \Omega} = g$. Then we have $g(z) = -U^\mu(z) - \log \mathrm{Cap}(\Omega)$ for $z\in \C \backslash \Omega$, where $\mu = (2\pi)^{-1}\Delta \tilde g$.
\end{lemma}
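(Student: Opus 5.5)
The plan is to apply Lemma~\ref{log} directly to $-\tilde g$ and then identify the additive constant from the growth of $g$ at infinity.

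\textbf{Setting up.} Since $\tilde g$ is subharmonic on $\C$, the function $f := -\tilde g$ is superharmonic on $\C$. We want $f$ to be harmonic outside a compact set. On $\C\backslash\overline\Omega$ we have $\tilde g = g$, and $g$ is harmonic there. So $f$ is harmonic outside the compact set $K=\overline\Omega$ (recall $\Omega$ is precompact).

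\textbf{Riesz decomposition.} By the converse part of Lemma~\ref{log}, there is a unique positive Borel measure
\begin{equation*}
\mu = -(2\pi)^{-1}\Delta f = (2\pi)^{-1}\Delta \tilde g
\end{equation*}
supported on $K$ such that $U^\mu - f$ is constant. That is, $U^\mu + \tilde g \equiv c$ on $\C$. In particular
\begin{equation*}
g(z) = -U^\mu(z) + c \quad \text{for } z\in \C\backslash\Omega,
\end{equation*}
using continuity of both sides up to $\partial\Omega$ from outside. The equality holds off $\overline\Omega$, and on $\partial\Omega$ we have $\tilde g = g$ by hypothesis.

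\textbf{Identifying the constant.} This is the step needing care. First I will check that $\mu$ is a finite measure. Its total mass is determined by the flux of $\tilde g$ through a large circle $|z|=R$, where $\tilde g = g = \log|z| + O(1)$. Since $g=\log|F|$ with $F$ the exterior Riemann map, $g$ is harmonic near $\infty$ in the sense $g(z)-\log|z|\to \log F'(\infty)$. By Green's theorem,
\begin{equation*}
\mu(\C)=\frac{1}{2\pi}\int_{|z|=R}\partial_n g\,ds = 1.
\end{equation*}

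Next I compare asymptotics at infinity. Because $\mu$ has mass $1$ and compact support, $U^\mu(z) = -\log|z| + o(1)$. Hence
\begin{equation*}
c = \lim_{z\to\infty}\bigl(g(z)-\log|z|\bigr).
\end{equation*}
The standard fact $g(z) = \log|z| - \log\mathrm{Cap}(\Omega) + o(1)$ then gives $c = -\log \mathrm{Cap}(\Omega)$. This fact can be seen from $F'(\infty) = 1/\mathrm{Cap}(\Omega)$, or equivalently from $g = -U^{\mu_E} - \log\mathrm{Cap}(\Omega)$ for the equilibrium measure $\mu_E$ of mass one. This yields the claimed identity.

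\textbf{Main obstacle.} The only delicate points are justifying the flux computation, so that the mass is $1$ and the constant matches, and the boundary behaviour on $\partial\Omega$. For the latter, $U^\mu$ is only lower semicontinuous, but equality on $\partial\Omega$ follows directly from $U^\mu = c-\tilde g$ holding everywhere on $\C$.
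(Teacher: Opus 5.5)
Your proof is correct, but it takes a different route from the paper. The paper never applies the Riesz decomposition to $\tilde g$ on all of $\mathbb{C}$. Instead it compares $\tilde g$ with the particular extension $g_0$ ($=0$ on $\Omega$), for which it takes $(2\pi)^{-1}\Delta g_0=\mu_E$ and $g=-U^{\mu_E}-\log\mathrm{Cap}(\Omega)$ as known. It then uses Green's identity on a smooth Jordan domain $\mathcal{O}\supset\overline\Omega$ to show that the potential of $(2\pi)^{-1}\Delta\tilde g$ at points outside $\mathcal{O}$ depends only on $\tilde g$ near $\partial\mathcal{O}$, where $\tilde g=g_0$. This localization is the same idea used in Lemma~\ref{replace1}. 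It avoids any mass computation or analysis at $\infty$, but as written it only gives $U^\mu=U^{\mu_E}$ on $\mathbb{C}\backslash\overline\Omega$.

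Your global identity $\tilde g=-U^\mu-\log\mathrm{Cap}(\Omega)$ on all of $\mathbb{C}$ gives more. It covers $\partial\Omega$ directly, and it shows explicitly that $\mu(\mathbb{C})=1$.

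One point to tighten concerns the converse part of Lemma~\ref{log}. As stated, it needs a growth condition at $\infty$. For example, $f(z)=-\log|z|+\operatorname{Re}z$ has $-(2\pi)^{-1}\Delta f=\delta_0$, but $U^{\delta_0}-f=-\operatorname{Re}z$ is not constant. In general, Weyl's lemma only gives that $\tilde g+U^\mu$ is harmonic on $\mathbb{C}$. Your own computations supply what is missing: since $\mu(\mathbb{C})=1$ and $\tilde g=\log|z|+O(1)$, this entire harmonic function is bounded near $\infty$, hence constant by Liouville. So do the mass computation before, rather than after, asserting that $U^\mu+\tilde g$ is constant.
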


\begin{proof}
    Let $g_0$ be the extension of $g$ such that $g_0(z) =0$ for $z\in \Omega$. Then we know that the equilibrium measure $\mu_E = (2\pi)^{-1}\Delta g_0$ satisfies that $g = -U^{\mu_E} -\log \mathrm{Cap}(\Omega)$ outside $\Omega$. So it suffices to prove that $U^{\mu_E} = U^\mu$ for any subharmonic extension $\tilde g$ of $g$ and $\mu = (2\pi)^{-1}\Delta \tilde{g}$. Given any $C^{2}$ Jordan domain $\mathcal{O}$ containing $\overline{\Omega}$ and any $z$ outside $\mathcal{O}$, the function $w\mapsto -\log|z-w|$ is harmonic inside $\mathcal{O}$. By the Gauss-Green formula, we have \begin{equation*}
        U^\mu (z) = \int \log \frac{1}{|z-w|}d\mu(w) = -\int_{\partial \mathcal{O}} \log\frac{1}{|z-\zeta|} \pder {\tilde{g}}{\mathbf{n}}(\zeta) |d\zeta|.
    \end{equation*} However, $\tilde g$ equals $g$ on $\partial \mathcal{O}$. Thus the value of $U^\mu$ outside $\mathcal{O}$ is the same as $U^{\mu_E}$. Since $\mathcal{O}$ is arbitrary, the result then follows.
\end{proof}

\subsection{Geometry of the zero sets}
Recall the notations established in Section~\ref{notation}.

\begin{lemma}\label{no_branch}
    $S_{i,j}$ consists of a single analytic curve. If $\ell_i \cap \ell_j = \set{z_{i,j}}$, then $S_{i,j}$ intersects $\partial W_{i,j}$ only at $z_{i,j}$. If $\ell_i$ and $\ell_j$ are parallel (intersect at $\infty$), then $S_{i,j}$ is disjoint from $\partial W_{i,j}$ and it intersects every line perpendicular to $\ell_i$.
\end{lemma}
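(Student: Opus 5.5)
Write $h = g_i - g_j$ on $W_{i,j}$. Each $g_k$ is harmonic on the region where it is defined: it is the reflection of $g$ across $\ell_k$, and $g$ is harmonic off $\Omega$. The half-plane bounded by $\ell_i$ that contains $\Omega$ is mapped by the reflection onto the opposite half-plane, which lies outside $\Omega$. So $g_i$ is harmonic on the open half-plane containing $\Omega$, hence on the interior of $W_{i,j}$, and the same holds for $g_j$. Therefore $h$ is harmonic on $\Int W_{i,j}$, continuous up to the boundary, and $S_{i,j} = h^{-1}(0)$.

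\textbf{Step 1: boundary values of $h$.} On $\ell_i$ we have $g_i = -g$. The point of $\ell_i$ reflected across $\ell_j$ lies outside $\Omega$, and away from $z_{i,j}$ it is not on $\partial\Omega$, so $g_j>0$ there. Also $g\ge 0$, with $g>0$ off $L_i$. Hence $h = -g - g_j < 0$ on $\ell_i \setminus\{z_{i,j}\}$. Symmetrically, $h = g_i + g > 0$ on $\ell_j\setminus\{z_{i,j}\}$. When $\ell_i$ and $\ell_j$ intersect, $h(z_{i,j})=0$, since both functions vanish at a vertex of $\Omega$ or at the corresponding reflected point. So $S_{i,j}$ meets $\partial W_{i,j}$ only at $z_{i,j}$, which gives the boundary claims.

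For the parallel case, $S_{i,j}$ meets every perpendicular line by the intermediate value theorem. The perpendicular segment runs from $\ell_i$, where $h<0$, to $\ell_j$, where $h>0$. We should check there is no degenerate point at which $g$ vanishes on both lines at once. This is fine: $L_i$ and $L_j$ are disjoint parallel edges, and at each point the reflected values are positive.

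\textbf{Step 2: $S_{i,j}$ is a single curve, with no branching.} Suppose a critical point of $h$ lay on $S_{i,j}$. Near it, $h^{-1}(0)$ would consist of at least four arcs, and at least four sign sectors would alternate $+,-,+,-$. By the maximum principle, each component of $\{h>0\}$ and of $\{h<0\}$ in the interior must reach the boundary of $W_{i,j}$, including $\infty$ in the unbounded wedge. The boundary, however, splits into just one negative arc ($\ell_i$) and one positive arc ($\ell_j$). At infinity we use $g_k(z) = -\log|z| + O(1)$ along the reflected directions. This makes $h$ bounded near $\infty$ inside the wedge, or we treat $\infty$ as the single common point in the parallel case.

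A planar topology argument then finishes the step. Each positive component must touch $\ell_j$ and each negative component must touch $\ell_i$. Two positive components separated by a negative component would force that negative component to be enclosed away from $\ell_i$, which is a contradiction. Hence $\{h>0\}$ and $\{h<0\}$ are each connected, and $h^{-1}(0)$ contains no closed loops; a loop would bound a region where $h\equiv 0$ by the maximum principle. It also has no critical points. Therefore $S_{i,j}$ is a single analytic arc running from $z_{i,j}$ to $\infty$, or from $\infty$ to $\infty$ in the parallel case.

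\textbf{Main obstacle.} The hardest part is the behaviour at $\infty$ in unbounded wedges. The maximum principle needs control of $h$ there, and components could escape to $\infty$ without reaching $\ell_i$ or $\ell_j$. I would first show that $h$ is bounded on $W_{i,j}$ using the logarithmic asymptotics: both $g_i$ and $g_j$ behave like $-\log|z|+O(1)$, so $h=O(1)$. Then I would apply the Phragmén–Lindelöf principle in the wedge, whose opening is less than $\pi$, or in the strip, to conclude that a component of $\{h>0\}$ not touching $\ell_j$ would force $h\le 0$ on it, which is impossible.
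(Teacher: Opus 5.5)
Your Step 1 does not establish the sign of $h$ on the boundary lines, and that sign is the crux of the lemma. Take $z\in(\ell_i\cap W_{i,j})\setminus\{z_{i,j}\}$ and let $z'$ be its reflection across $\ell_j$. Then $g_i(z)=-g(z)$ and $g_j(z)=-g(z')$. Since $z'$ lies outside $\overline{\Omega}$, we have $g(z')>0$, so $g_j(z)<0$, not $g_j(z)>0$ as you assert.

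Hence $h(z)=g(z')-g(z)$ is a difference of two positive numbers. Positivity of $g$ tells you nothing about its sign, or even that it is nonzero. Without this you cannot exclude $S_{i,j}$ meeting $\ell_i$ at many points. The same error affects your claim on $\ell_j$ and your intermediate value argument in the parallel case. Also, at $z_{i,j}$ the two functions need not vanish: both equal $-g(z_{i,j})$, which is nonzero unless $z_{i,j}$ is a vertex.

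The missing idea is a real comparison of $g(z)$ with $g(z')$. The paper uses the representation $g(\zeta)=\int\log|\zeta-w|\,d\mu_E(w)-\log\mathrm{Cap}(\Omega)$. Because $\partial\Omega$ lies in the closed half-plane of $\ell_j$ containing $z$, every $w\in\partial\Omega$ satisfies $|z-w|\le|z'-w|$. The inequality is strict for $w\notin L_j$, a set of positive $\mu_E$-measure, so $g(z)<g(z')$.

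This yields $h>0$ on $\ell_i\setminus\{z_{i,j}\}$ and, symmetrically, $h<0$ on $\ell_j\setminus\{z_{i,j}\}$. These are the opposite of the signs you wrote, and they agree with the paper's later description $\Omega_i=\{g_i>g_j\}$.

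With this corrected, your Step 2 is essentially the paper's argument. Every nodal component of $h$ must reach $\partial W_{i,j}$ by the maximum principle plus Phragm\'en--Lindel\"of, and your bound $h=O(1)$ is enough there. Each boundary ray or line lies in one sign region and so meets exactly one component. Your Jordan-curve argument then rules out saddle points and loops on $S_{i,j}$, a step the paper leaves implicit.
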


\begin{figure}[h!]
    \centering
        \begin{tikzpicture}[scale=0.7]
        \node[inner sep=0pt] (julia) at (0,0){\includegraphics[width=0.5\linewidth]{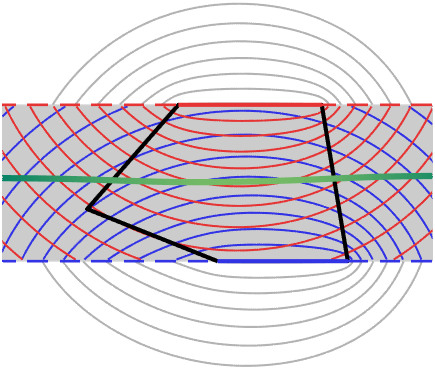}};
        \node[red,fill=white] at (0.7,2.3) {\Large {$L_j$}};
        \node[blue,fill=white] at (0.9,-2.3) {\Large {$L_i$}};
        \node[right, teal!80!white] at (4.4,0) {\large $S_{i,j}$};
        \node[right, fill=black!20!white] at (3,1) {\large $W_{i,j}$};
        \node[red] at (-4.8,1.8) {\large $\ell_j$};
        \node[blue] at (-4.8,-1.8) {\large $\ell_i$};
    \end{tikzpicture}
    \caption{The case where $\ell_i$ and $\ell_j$ are parallel}
    \label{fig:parallel}
\end{figure}

\begin{proof}
    Note that in the case when $\ell_i$ and $\ell_j$ intersects at $z_{i,j}$, we have \begin{equation*}
        -g_i(z_{i,j}) = -g_j(z_{i,j}) = g(z_{i,j}).
    \end{equation*} Thus we have $z_{i,j} \in S_{i,j}$. Given any $z\in (\ell_i \cap W_{i,j})\backslash \set{z_{i,j}}$, we set $z'$ to be the reflection of $z$ with respect to $\ell_j$ (see Figure~\ref{fig:reflection}). We then have that \begin{equation*}
        -g_i(z) = g(z),\quad -g_j(z) = g(z').
    \end{equation*}
    Since the support of the equilibrium $\mu_E$ (which is $\partial \Omega$) lies on $z$'s side of $\C \backslash\ell_j$, every point on $\partial\Omega$ is closer to $z$ than $z'$. Let $\mu_E$ be the equilibrium measure of $\Omega$. We thus have \begin{align*}
        g(z) &= -\log \mathrm{Cap}(\Omega) -\int \log \frac{1}{|z-w|}d\mu_E(w) \\&<-\log \mathrm{Cap}(\Omega) -\int \log \frac{1}{|z'-w|}d\mu_E(w) \\
        &=g(z').
    \end{align*} This implies that $z\notin S_{i,j}$. Hence, $S_{i,j}$ does not intersect $(\ell_i \cap W_{i,j})$ except possibly at $z_{i,j}$. Similarly, it can be shown that $S_{i,j}$ does not intersect $(\ell_j\cap W_{i,j})$, except possibly at $z_{i,j}$.

    \begin{figure}[h!]
        \centering
        \begin{tikzpicture}[scale = 2]
            \draw[red, line width=1.5] (1.5, 0) node[right]{\large $\ell_j$} -- (0,0) -- (-1,0.8) node[above left=0.3pt] {\large $\ell_i$};
            \draw[red, dashed, line width=1.5] (0,0) -- (-1,-0.8);
            \fill (-0.8, 0.64) circle (1pt) node[above right=0.3pt] {\large $z$};
            \fill (-0.8, -0.64) circle (1pt) node[below right=0.3pt] {\large $z'$};
            \draw[blue, line width=1.6] (0.8,0) -- (0.3,0) -- (-0.2,0.16) -- (-0.7,0.56) -- (1.3,1) --cycle;
            \node[blue] at (0.35,0.4) {\huge $\Omega$};
            \node[blue, left] at (-0.4, 0.25) {\large $L_i$};
            \node[blue, below] at (0.55, 0) {\large $L_j$};
        \end{tikzpicture}
        \caption{Idea behind the proof of Lemma \ref{no_branch}.}
        \label{fig:reflection}
    \end{figure}
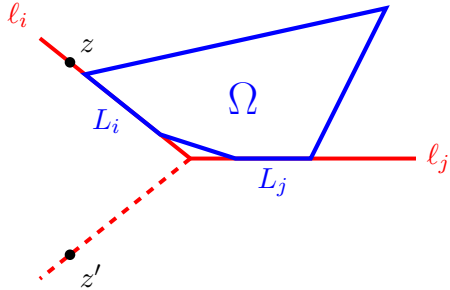

    Now we prove the rest of the assertions. Take any  component $C$ of $W_{i,j}\backslash S_{i,j}$, if $C \cap \partial W_{i,j} = \emptyset$, then we have $\partial C \subseteq S_{i,j}$. Since $S_{i,j}$ is the zero set of a harmonic function, we have that $C$ cannot be bounded. Without loss of generality, we assume that $g_i-g_j$ is positive in $C$. Then the function \begin{equation*}
        u(z) := \begin{cases}
            g_i(z) - g_j(z), &z\in C,\\
            0, &z\in W_{i,j}\backslash C.
        \end{cases}
    \end{equation*}
    Since $u$ is harmonic on $W_{i,j} \backslash \partial C$ and satisfies the mean value inequality on $\partial C$ for sufficiently small $r>0$: \begin{equation*}
        u(z) = 0 \leq \frac{1}{2\pi}\int^{2\pi}_0 u(z+re^{i\theta})dr.
    \end{equation*}
    We have that $u$ is a subharmonic function vanishes on the sector boundary $\partial W_{i,j}$ and satisfies the logarithmic growth condition \begin{equation*}
        u(z) = O(\log|z|).
    \end{equation*}
    
    By the Phragmén-Lindelöf theorem for subharmonic functions (see \cite{Telyakovski1999phragmen}), we have that $u\equiv 0$ which would imply $g_i \equiv g_j$. But this contradicts the fact that $S_{i,j} \neq W_{i,j}$. Therefore, all the components of $W_{i,j}\backslash S_{i,j}$ intersect either $\ell_i$ or $\ell_j$ and are unbounded. Since $S_{i,j}$ is disjoint from $\ell_i$ and $\ell_j$ except for the two lines' intersection, we have that there is only one component of $W_{i,j}\backslash S_{i,j}$ that intersects $\ell_i$ and one intersecting $\ell_j$. We denote the components of $W_{i,j}\backslash S_{i,j}$ that contains $L_i$ by $C_i$ and define $C_j$ similarly. It follows that $C_i$ and $C_j$ are the only two components and we have that $C_i \neq C_j$. The result then follows.
\end{proof}

\section{Electrostatic Skeleton for Quadrilaterals}\label{quad}
\subsection{Proof of Theorem \ref{theo_kite}} Theorem~\ref{theo_kite} can be considered as an immediate consequence of Lemma~\ref{no_branch}, as the following proof illustrates. Note that a kite shape is a quadrilateral symmetric with respect to one of its diagonals.

\begin{proof} 
    Given a convex kite shape $\Omega$, without loss of generality, we assume the line of symmetry $\ell$ passes $z_1$ and $z_3$. By symmetry, we then have that both $S_{1,2}$ and $S_{3,4}$ are contained in $\ell$. Lemma~\ref{no_branch} implies that there exists a unique (closed) segment of $S_{2,3}$ that connects $z_2$ to $\ell$. 
    We shall denote this segment by $K_1$ and denote the point when $K_1$ and $\ell$ intersect by $c$. By symmetry, there exists a unique segment $K'_1$ of $S_{1,4}$ which connects $z_4$ with $\ell$ and also reaches $\ell$ at $c$. The segment $K'_1$ is the reflection of $K_1$ with respect to $\ell$.
    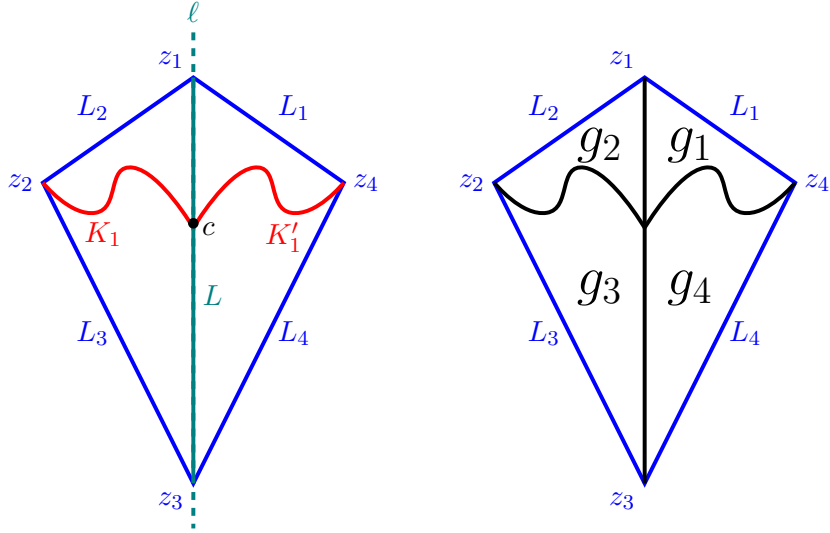
\begin{figure}[h!]
        \centering
        \begin{tikzpicture}[scale = 2]
        \begin{scope}
            \draw[dashed, line width=1.5, teal] (0,1) node[above] {\large $\ell$}-- (0,-2.3);
            \draw[blue, line width=1.5] (1,0) -- (0, 0.7) -- (-1,0) -- (0,-2) -- cycle;
            \draw[red, line width=1.5] plot[smooth, tension=1] coordinates {(-1,0) (-0.65,-0.2) (-0.4,0.1) (0,-0.3)};
            \draw[red, line width=1.5] plot[smooth, tension=1] coordinates {(1,0) (0.65,-0.2) (0.4,0.1) (0,-0.3)};
            \draw[teal, line width=1.5] (0,0.7) -- (0,-2);
            \node[above left, blue] at (0, 0.7) {\large $z_1$};
            \node[below left, blue] at (0,-2) {\large $z_3$};
            \node[left, blue] at (-1, 0) {\large $z_2$};
            \node[right, blue] at (1, 0) {\large $z_4$};
            \node[left, blue] at (-0.5,0.5) {\large $L_2$};
            \node[right, blue] at (0.5,0.5) {\large $L_1$};
            \node[left, blue] at (-0.5,-1) {\large $L_3$};
            \node[right, blue] at (0.5,-1) {\large $L_4$};
            \node[right, teal] at (0,-0.75) {\large $L$};
            \node[below, red] at (-0.6,-0.2) {\large $K_1$};
            \node[below, red] at (0.6,-0.2) {\large $K_1'$};
            \fill[black] (0,-0.27) circle (1pt);
            \node[below] at (0.1,-0.2) {\large $c$};
        \end{scope}

        \begin{scope}[shift = {(3,0)}]
            \draw[blue, line width=1.5] (1,0) -- (0, 0.7) -- (-1,0) -- (0,-2) -- cycle;
            \draw[line width=1.5] plot[smooth, tension=1] coordinates {(-1,0) (-0.65,-0.2) (-0.4,0.1) (0,-0.3)};
            \draw[line width=1.5] plot[smooth, tension=1] coordinates {(1,0) (0.65,-0.2) (0.4,0.1) (0,-0.3)};
            \draw[line width=1.5] (0,0.7) -- (0,-2);
            \node[above left, blue] at (0, 0.7) {\large $z_1$};
            \node[below left, blue] at (0,-2) {\large $z_3$};
            \node[left, blue] at (-1, 0) {\large $z_2$};
            \node[right, blue] at (1, 0) {\large $z_4$};
            \node[left, blue] at (-0.5,0.5) {\large $L_2$};
            \node[right, blue] at (0.5,0.5) {\large $L_1$};
            \node[left, blue] at (-0.5,-1) {\large $L_3$};
            \node[right, blue] at (0.5,-1) {\large $L_4$};
            \node at (0.3,0.25) {\Huge $g_1$};
            \node at (-0.3,0.25) {\Huge $g_2$};
            \node at (0.3,-0.7) {\Huge $g_4$};
            \node at (-0.3,-0.7) {\Huge $g_3$};
        \end{scope}
        \end{tikzpicture}
        \caption{Sketch of the proof for Theorem~\ref{theo_kite}. \\Left: skeleton constructed from $S_{i,j}$'s on the left. \\Right: subharmonic extension of $g$ that generates the skeleton.}
        \label{fig:kite}
    \end{figure}

    We set the segment of $\ell$ inside the kite shape by $L$. Then the three curves $K_1,K_1'$ and $L$ divide $\Omega$ into four components at $c$. We shall denote the union of these four arcs by $K$. Then one can extend $g$ continuously to $\C \backslash K$ using $g_i$'s on the four components. That is, the extension coincides $g_i$ on the component touching $L_i$, which we denote by $\Omega_i$, which we denote by $\tilde g$. Note that on $\Omega_i \cup \Omega_{i+1}$, $\tilde g$ coincides $\max(g_i, g_{i+1})$ as $\Omega_i$ and $\Omega_{i+1}$ are contained in different nodal regions of $g_i - g_{i+1}$, which is $W_{i,i+1} \backslash S_{i,i+1}$. It follows that $\tilde g$ satisfies the mean value inequality on $K$ for all sufficiently small radii. This shows that $g$ after extension is globally subharmonic and continuous which is harmonic on $\C \backslash K$. Thus we have $\Delta \tilde g$ is a positive Borel measure compactly supported inside $K$. In particular, $\supp \Delta \tilde g$ does not contain simple loops. The result hence follows from Lemma~\ref{subhar}
\end{proof}

As one can see, the main idea of the proof is to construct the extension of $g$ using the mirror reflections $g_i$'s. So long as the switching between $g_i$ and $g_j$ happens only on $S_{i,j}$ and respects the nodal regions of $g_i-g_j$, we will end up with a subharmonic extension. The main challenge here is that in order for the subharmonic extension to admit the electrostatic skeleton, the "switching arcs" need to form a tree. This is easy in the case of kite shapes due to symmetry but much harder in general.

\subsection{Isosceles Trapezoids}
The simplicity of the proof to Theorem~\ref{theo_kite} stems from the fact that two arcs in the electrostatic skeleton are on $\ell$, while the other two arcs are symmetric. Therefore, we effectively only need to look at the behavior of one arc to construct the skeleton. This is a special property for kite shapes that unfortunately cannot be generalized. To prove the existence of the skeleton for isosceles trapezoids, we need the following proposition:

\begin{lemma}\label{trape}
    For any isosceles trapezoid $\Omega$, we parametrize each analytic curve $S_{i,i+1}$ by a function $\gamma_{i}$ with non-vanishing derivative oriented from $z_i$ to $\infty$. We denote the line of symmetric for $\Omega$ by $\ell$, if $t$ is a critical point of $g_i \circ \gamma_i$, then $z_i$ and $\gamma_i(t)$ are separated by $\ell$.
    
    In particular, before hitting $\ell$, $g_i$ is monotone along $\gamma_i$. And it makes sense to define the first time when $\gamma_i$ and $\gamma_{i+1}$ meet.
\end{lemma}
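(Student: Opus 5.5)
Throughout, fix $i$ and write $h=g_i-g_{i+1}$, so that $S_{i,i+1}=\set{h=0}$. By Lemma~\ref{no_branch} this is a single analytic curve. I will first turn ``critical point of $g_i\circ\gamma_i$'' into a geometric tangency condition, and then rule that condition out on $z_i$'s side of $\ell$ using the convexity of equipotentials (Lemma~\ref{convex}) together with the symmetry of $\Omega$.

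\emph{Step 1: reformulation.} Let $w=\gamma_i(t)$ and $s=g_i(w)=g_{i+1}(w)$. The curve $\gamma_i$ has non-vanishing derivative, so $\gamma_i'(t)\perp\nabla h(w)$. The condition $(g_i\circ\gamma_i)'(t)=0$ says $\gamma_i'(t)\perp\nabla g_i(w)$. Hence $\nabla g_i(w)$ and $\nabla g_{i+1}(w)$ are parallel, so the level curves $\set{g_i=s}$ and $\set{g_{i+1}=s}$ are tangent at $w$.

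Let $R_i,R_{i+1}$ denote the reflections in $\ell_i,\ell_{i+1}$. Both level curves are reflections of the same equipotential $\Gamma=\set{g=-s}$. Note $-s>0$: $\Omega\subset W_{i,i+1}$, and on the region of interest the reflected points lie outside $\Omega$. By Lemma~\ref{convex}, $\Gamma$ is a strictly convex curve. Set $p=R_i w$ and $q=R_{i+1}w$, both on $\Gamma$. The tangency at $w$ then says that the rotation $\rho=R_{i+1}R_i$ about $z_i$, by twice the interior angle at $z_i$, maps $p$ to $q$ and maps the tangent line of $\Gamma$ at $p$ onto the tangent line of $\Gamma$ at $q$.

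\emph{Step 2: use the symmetry.} The sides are a base (symmetric under the reflection $\sigma$ in $\ell$) and a leg, and $\Gamma$ is $\sigma$-invariant. Without loss of generality $L_i$ is the base, so $\ell_i\perp\ell$ and $R_i$ commutes with $\sigma$. I will parametrize $\Gamma$ by its support function $h_\Gamma(\theta)$ with respect to the origin $z_i$. The condition ``$\rho$ carries the tangent line at $p$ to the tangent line at $q$ and $p\mapsto q$'' is equivalent to $h_\Gamma(\theta+2\alpha)=h_\Gamma(\theta)$ and $h_\Gamma'(\theta+2\alpha)=h_\Gamma'(\theta)$ for the normal angle $\theta$ at $p$.

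Symmetry of $\Gamma$ about $\ell$ gives $h_\Gamma(\pi-\theta)=h_\Gamma(\theta)+2d\cos\theta$, where $d$ is the distance from $z_i$ to $\ell$ and $\theta$ is measured from the direction of $\ell_i$. I then aim to show the following. If $w$ lies on $z_i$'s side of $\ell$, the normals at $p$ and $q$ both point into the half-plane of $z_i$. On that range the function $\theta\mapsto h_\Gamma(\theta)$ measured from $z_i$ is strictly unimodal, because $z_i$ is off-center toward that side and the nearest point of $\Gamma$ to $z_i$ lies there. A strictly unimodal function cannot take equal values together with equal derivatives at two distinct angles $\theta$ and $\theta+2\alpha$ in the same monotone branch. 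Equality of the values forces the two angles to straddle the extremum, and then the derivatives have opposite signs. This gives the contradiction, so $w$ and $z_i$ lie on opposite sides of $\ell$. The case where $L_i$ is a leg and $L_{i+1}$ the base is symmetric.

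\emph{Step 3: consequences.} On the initial piece of $\gamma_i$ before it meets $\ell$ there are no critical points, so $g_i\circ\gamma_i$ is strictly monotone there. By symmetry, $\gamma_{i+1}$ (or $\sigma\circ\gamma_i$) is monotone on the other side. Hence the first meeting point of $\gamma_i$ and $\gamma_{i+1}$ is well defined via monotone level parameters.

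\emph{Main obstacle.} The hard part is Step 2: establishing the unimodality of the support function from $z_i$ on the relevant angular range, and locating the extremum relative to $\ell$. If the support-function route fails, I would instead imitate the proof of Lemma~\ref{no_branch}. That route compares $g$ at $p$ and at $\sigma(q)$ via the potential of $\mu_E$, using that $\partial\Omega$ lies strictly on one side of the perpendicular bisector of $p$ and $\sigma q$. The aim there is to contradict the equality $g(p)=g(q)$ at a tangency point on the wrong side.
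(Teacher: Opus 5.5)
Your Step~1 is correct. Since $z_i$ is interior to the region bounded by $\Gamma$, we have $h_\Gamma>0$, and the tangency is exactly $h_\Gamma(\theta+2\alpha)=h_\Gamma(\theta)$, $h_\Gamma'(\theta+2\alpha)=h_\Gamma'(\theta)$. The proof breaks at Step~2. Your claim that $h_\Gamma$ is strictly unimodal on the half-circle of normals pointing into $z_i$'s side of $\ell$ is unproved, and it is false in general.

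Here is a counterexample. Let $z_i$ be an endpoint of the shorter base and $z'$ the other endpoint of the leg through $z_i$, so $z'-z_i$ points away from $\ell$. On your range the two vertices beyond $\ell$ never realize the maximum defining the support function of $\Omega$. Hence there $h_\Omega(\theta)=\max\{0,\ |z'-z_i|\cos(\theta-\arg(z'-z_i))\}$. This has a strict local maximum $|z'-z_i|$ at the interior angle $\arg(z'-z_i)$ and afterwards decreases to the height of the trapezoid.

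As the level tends to $0$, $\Gamma\to\partial\Omega$, so $h_\Gamma\to h_\Omega$ uniformly. Thus for small levels $h_\Gamma$ has two interior critical points. One is its global minimum at the nearest point of $\Gamma$ to $z_i$, which does lie in this range, as you say. The other is an interior local maximum near $\arg(z'-z_i)$. With a minimum followed by a maximum, the shape of $h_\Gamma$ no longer excludes equal values and equal derivatives at $\theta$ and $\theta+2\alpha$: one angle can sit before the minimum and the other after the maximum, both on decreasing branches. So the straddling argument proves nothing. To close the gap you would have to localize $\theta_p$ and $\theta_q$ precisely and prove a monotonicity property of $h_\Gamma$ valid at every level. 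That is essentially the whole content of the lemma, and the nearest-point heuristic does not supply it.

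Two further remarks. First, your assertion that both normals point into $z_i$'s half-plane is true but needs an argument. $R_i$ preserves the half-planes of $\ell$, and reflection in the leg line moves points of $W_{i,i+1}$ strictly farther from $\ell$. Concretely, put $z_i=0$, the base ray at angle $0$ and the leg ray at angle $\psi\in(0,\pi)$; a point at angle $\phi\in(0,\psi)$ goes to angle $2\psi-\phi$, and $\cos(2\psi-\phi)<\cos\phi$. Symmetry and strict convexity of $\Gamma$ then give the normals.

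Second, your fallback cannot work as phrased. The identity $g(p)=g(q)=g(\sigma q)$ holds at \emph{every} point of $S_{i,i+1}$, including points near $z_i$ on $z_i$'s side. A potential comparison can therefore yield a contradiction only if the tangency enters essentially, and you do not say how.

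The ingredient the paper uses and you never invoke is the mirror tangency. By symmetry, $g_i^{-1}(s)$ is also tangent at $\sigma(w)$ to the level-$s$ curve of the reflection in the mirror-image leg $\sigma(L_{i+1})$. The three level arcs glued at $w$ and $\sigma(w)$ then form a symmetric $C^1$ convex curve, which the paper plays against the convexity of $\Omega$. That route needs no global information about $h_\Gamma$.
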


    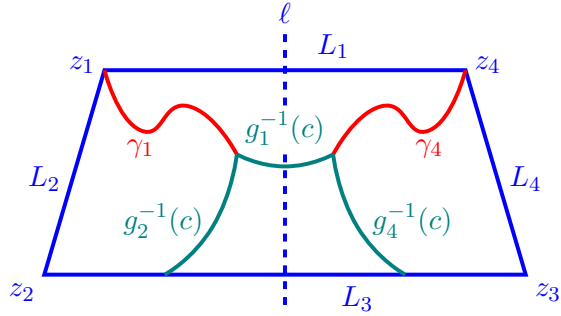
\begin{figure}[h!]
        \centering
        \begin{tikzpicture}[scale=1.6]
            \draw[blue, line width=1.5, dashed] (0,2) node[above]{\large $\ell$} -- (0,1.4);
            \draw[blue, line width=1.5, dashed] (0,1) -- (0,-0.3);
            \draw[blue, line width=1.5] (2,0) -- (1.5,1.7) -- (-1.5,1.7) -- (-2,0) -- cycle; 
            \draw[red, line width=1.5] plot[smooth, tension=1] coordinates {(1.5,1.7) (1.2, 1.2) (0.8,1.4) (0.4, 1)};
            \draw[red, line width=1.5] plot[smooth, tension=1] coordinates {(-1.5,1.7) (-1.2, 1.2) (-0.8,1.4) (-0.4, 1)};
            \draw[teal, line width=1.5] plot[smooth, tension=1] coordinates {(-1,0) (-0.6,0.4) (-0.4,1)};
            \draw[teal, line width=1.5] plot[smooth, tension=1] coordinates {(1,0) (0.6,0.4) (0.4,1)};
            \draw[teal, line width=1.5] plot[smooth, tension=1] coordinates {(-0.4,1) (0,0.9) (0.4,1)};
            \node[blue,above left] at (-1.5,1.6) {\large $z_1$};
            \node[blue,below left] at (-2,0) {\large $z_2$};
            \node[blue,above right] at (1.5,1.6) {\large $z_4$};
            \node[blue,below right] at (2,0) {\large $z_3$};
            \node[red,below] at (-1.2,1.2) {\large $\gamma_{1}$};
            \node[red,below] at (1.2,1.2) {\large $\gamma_{4}$};
            \node[teal, left] at (-0.6, 0.45) {\large $g^{-1}_2(c)$};
            \node[teal, right] at (0.65, 0.45) {\large $g^{-1}_4(c)$};
            \node[teal, above] at (0,1) {\large $g^{-1}_1(c)$};
            \node[blue] at (0.4,1.9) {\large $L_1$};
            \node[blue] at (0.6,-0.2) {\large $L_3$};
            \node[blue] at (2,0.8) {\large $L_4$};
            \node[blue] at (-2,0.8) {\large $L_2$};
        \end{tikzpicture}
        \caption{Sketch of proof of Lemma 3.1. If $t_0$ is a critical point of the real function $g_1\circ \gamma_1 = g_2\circ \gamma_1$ and $\gamma_1(t)$ is on the left of $\ell$, then the level sets $g_1^{-1}(c)$, $g_2^{-1}(c)$ and $g_4^{-1}(c)$ will intersect tangentially for $c = g_1(\gamma_1(t_0))$. This is geometrically impossible.}
        \label{fig:trapezoid}
    \end{figure}

\begin{proof}
    Without loss of generality, we assume that $z_1$ and $z_4$ are symmetric vertices for $\Omega$. Suppose for the contradiction that there exists a critical point $t_0$ for $g_1\circ \gamma_{1}$ such that $z_1$ and $\gamma_{1}(t_0)$ lie on the same side of $\ell$. We set $c = g_1(\gamma_{1}(t_0))$. Since $t_0$ is a critical point of $g_1\circ \gamma_1 = g_2\circ \gamma_1$, we have \begin{equation*}
        \nabla g_1(\gamma_1(t_0)) \cdot \gamma_1'(t_0) = \nabla g_2(\gamma_1(t_0)) \cdot \gamma_1'(t_0) = 0.
    \end{equation*} In particular, $\nabla g_1$ and $\nabla g_2$ are parallel at $\gamma_1(t_0)$. It follows that the convex curves $g_1^{-1}(c)$ and $g_2^{-1}(c)$ intersect tangentially at $\gamma_{1}(t_0)$. However, by symmetry, $g_1^{-1}(c)$ and $g_4^{-1}(c)$ also intersect tangentially at some point symmetric to $\gamma_{1}(t_0)$. This would allow us to construct the $C^1$ convex curve symmetric with respect to $\ell$ by combining the segments of three level sets, which starts and ends outside $\Omega$. This is impossible for that $\Omega$ is convex.

\end{proof}

Now we are ready to prove Theorem~\ref{theo_trap}. Essentially, Lemma~\ref{trape} guarantees that it makes sense to ask where $S_{1,2}$ and $S_{2,3}$ (parametrized by $\gamma_1$ and $\gamma_2$) meet for the first time before hitting $\ell$. This helps us separate cases and build the skeleton accordingly.

\begin{proof}[Proof of Theorem~\ref{theo_trap}]
    Given the isosceles trapezoid $\Omega$, we again denote the line of symmetry by $\ell$ and assume the setup in Lemma~\ref{trape}. We denote the component of $\Omega \backslash \ell$ that touches $L_1$ by $\Omega_1$. We also denote the component of $S_{1,2} \cap\Omega_1$ (paramtrized by $\gamma_{1}$) that contains $z_1$ by $S_1$ and that of $S_{2,3}\cap\Omega_1$ that contains $z_2$ by $S_2$. Then by Lemma~\ref{trape}, $g_1\circ \gamma_{1}$ (with respect to $g_2\circ \gamma_{2}$)  monontonely decreases when $\gamma_{1}(t) \in S_1$ (with respect to $\gamma_{2}(t) \in S_2$). 

    We will consider first the case when $S_1$ and $S_2$ are disjoint. We will denote their intersections with $\ell$ by $w_1$ and $w_2$ respectively. Then by symmetry, $\gamma_{1}$ and $\gamma_{4}$ meet at $w_1$ and $\gamma_{2}$ and $\gamma_{3}$ meet at $w_2$. Additionally, $w_1$ and $w_2$ are in turn connected by a segment of $S_{2,4}$, which we denote by $S'$. Let $S_1'$ and $S_2'$ be the reflection of $S_1$ and $S_2$ with respect to $\ell$ which contain part of $S_{1,4}$ and $S_{3,4}$ respectively. Then we can easily verify that \begin{equation*}
        K := S_1 \cup S_2 \cup S' \cup S_1' \cup S_2'
    \end{equation*} forms a tree which allows $g$ extends continuously to a subharmonic function globally.

    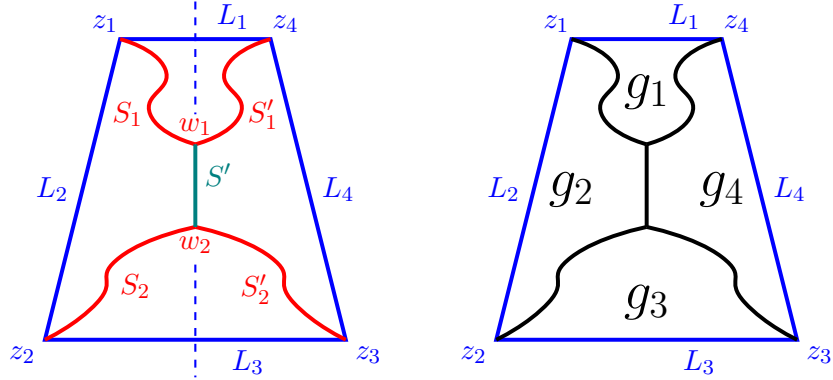
\begin{figure}[h!]
        \centering
        \begin{tikzpicture}
        \begin{scope}
            \draw[blue, line width=1.5] (2,0) -- (1,4) -- (-1,4) -- (-2,0) -- cycle;
            \draw[blue, line width=0.8, dashed] (0,4.5) -- (0,3);
            \draw[blue, line width=0.8, dashed] (0,1) -- (0,-0.5);
            \draw[red, line width=1.5] plot[smooth, tension=1] coordinates {(1,4) (0.4,3.6) (0.6,3) (0,2.6)};
            \draw[red, line width=1.5] plot[smooth, tension=1] coordinates {(-1,4) (-0.4,3.6) (-0.6,3) (0,2.6)};
            \draw[red, line width=1.5] plot[smooth, tension=1] coordinates {(2,0) (1.3,0.5) (1,1.1) (0,1.5)};
            \draw[red, line width=1.5] plot[smooth, tension=1] coordinates {(-2,0) (-1.3,0.5) (-1,1.1) (0,1.5)};
            \draw[teal, line width=1.5] (0,1.5) -- (0,2.6);
            \node[red, above] at (0,2.6) {\large $w_1$};
            \node[red, below] at (0,1.5) {\large $w_2$};
            \node[blue] at (1.2,4.2) {\large $z_4$};
            \node[blue] at (2.3, -0.2) {\large $z_3$};
            \node[blue] at (-1.2,4.2) {\large $z_1$};
            \node[blue] at (-2.3, -0.2) {\large $z_2$};
            \node[red, below] at (0.8,1) {\large $S_2'$};
            \node[red, below] at (-0.8,1) {\large $S_2$};
            \node[red] at (0.9,3) {\large $S_1'$};
            \node[red] at (-0.9,3) {\large $S_1$};
            \node[teal, right] at (0,2.2) {\large $S'$};

            \node[blue] at (0.5,4.3) {\large $L_1$};
            \node[blue] at (0.7,-0.3) {\large $L_3$};
            \node[blue] at (1.9,2) {\large $L_4$};
            \node[blue] at (-1.9,2) {\large $L_2$};
        \end{scope}

        \begin{scope}[shift={(6,0)}]
            \node[blue] at (0.5,4.3) {\large $L_1$};
            \node[blue] at (0.7,-0.3) {\large $L_3$};
            \node[blue] at (1.9,2) {\large $L_4$};
            \node[blue] at (-1.9,2) {\large $L_2$};
            
            \draw[blue, line width=1.5] (2,0) -- (1,4) -- (-1,4) -- (-2,0) -- cycle;
            \draw[line width=1.5] plot[smooth, tension=1] coordinates {(1,4) (0.4,3.6) (0.6,3) (0,2.6)};
            \draw[line width=1.5] plot[smooth, tension=1] coordinates {(-1,4) (-0.4,3.6) (-0.6,3) (0,2.6)};
            \draw[line width=1.5] plot[smooth, tension=1] coordinates {(2,0) (1.3,0.5) (1,1.1) (0,1.5)};
            \draw[line width=1.5] plot[smooth, tension=1] coordinates {(-2,0) (-1.3,0.5) (-1,1.1) (0,1.5)};
            \draw[line width=1.5] (0,1.5) -- (0,2.6);
            \node[blue] at (1.2,4.2) {\large $z_4$};
            \node[blue] at (2.3, -0.2) {\large $z_3$};
            \node[blue] at (-1.2,4.2) {\large $z_1$};
            \node[blue] at (-2.3, -0.2) {\large $z_2$};

            \node at (0,3.3) {\Huge $g_1$};
            \node at (0,0.5) {\Huge $g_3$};
            \node at (1,2) {\Huge $g_4$};
            \node at (-1,2) {\Huge $g_2$};
        \end{scope}
        \end{tikzpicture}
        \caption{First case. Left: Skeleton. Right: Subharmonic extension.}
        \label{fig:trape_1}
    \end{figure}

    Now we consider the case when $S_1$ intersects $S_2$. Note that if $S_1$ and $S_2$ only intersect on $\ell$, it can fall under the previous case such that $w_1$ and $w_2$ are the same point. Thus we assume otherwise. Since $\gamma_{1}$ and $\gamma_{2}$ are monotone on $S_1$ and $S_2$ respectively, we can order the points in $S_1\cap S_2$ in terms of the value of $g_1\circ \gamma_{1}$. We denote the point with the largest value as $w$. That is, $w$ is where two curves "meet for the first time". By definition, we have \begin{equation*}
        g_1(w) = g_2(w) = g_3(w)
    \end{equation*} and we denote this value by $a$. This implies that $w\in S_{1,3}$. Note that since $S_1$ and $S_2$ are distinct analytic curves, $S_1\cap S_2$ is a discrete set near $w$. Thus such a $w$ is unique. Let $K_3$ be the unique component of $S_{1,3}$ that connects $w$ to $\ell$. Note that this component is unique because due to symmetry, $S_{1,3}$ cannot intersect $\ell$ twice. 
    
    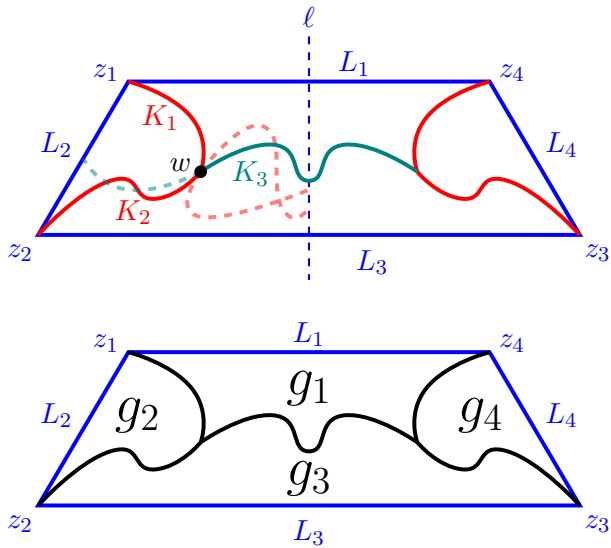
\begin{figure}[h!]
        \centering
        \begin{tikzpicture}[scale=1.2]
        \begin{scope}
            \draw[blue, line width=1.5] (3,0) -- (2,1.7) -- (-2,1.7) -- (-3,0) -- cycle;
            \draw[blue, line width=0.8, dashed] (0,2.2) node[above] {\large $\ell$} -- (0,-0.5);
            \draw[red, line width=1.5] plot[smooth, tension=1] coordinates {(3,0) (2.2,0.6) (1.7,0.4) (1.2,0.7)};
            \draw[red, line width=1.5] plot[smooth, tension=1] coordinates {(-3,0) (-2.2,0.6) (-1.7,0.4) (-1.2,0.7)};
            \draw[red, line width=1.5] plot[smooth, tension=1] coordinates {(2,1.7) (1.3,1.3) (1.2,0.7)};
            \draw[red, line width=1.5] plot[smooth, tension=1] coordinates {(-2,1.7) (-1.3,1.3) (-1.2,0.7)};
            \draw[red, line width=1.5, dashed, opacity=0.5] plot[smooth,tension=1] coordinates {(-1.2,0.7) (-0.5, 1.2) (-0.3, 0.3) (0, 0.3)};
            \draw[red, line width=1.5, dashed, opacity=0.5] plot[smooth,tension=1] coordinates {(-1.2,0.7) (-1.2, 0.2) (0, 0.5)};
            \draw[teal, line width=1.5, dashed, opacity=0.5] plot[smooth,tension=1] coordinates {(-1.2,0.7) (-2, 0.5) (-2.5, 0.85)};
            \draw[teal, line width=1.5] plot[smooth, tension=1] coordinates {(-1.2,0.7) (-0.4,1) (0, 0.6) (0.4,1) (1.2, 0.7)};
            \node[blue, above right=0.1pt] at (2, 1.6) {\large $z_4$};
            \node[blue] at (3.2, -0.2) {\large $z_3$};
            \node[blue, above left=0.1pt] at (-2, 1.6) {\large $z_1$};
            \node[blue] at (-3.2, -0.2) {\large $z_2$};
            \fill (-1.2, 0.7) circle (2pt);
            \node[above left] at (-1.2, 0.6) {\large $w$};
            \node[red] at (-1.65, 1.3) {\large $K_1$};
            \node[red] at (-1.95, 0.25) {\large $K_2$};
            \node[teal] at (-0.65, 0.68) {\large $K_3$};

            \node[blue] at (0.5,1.9) {\large $L_1$};
            \node[blue] at (0.7,-0.3) {\large $L_3$};
            \node[blue] at (2.8,1) {\large $L_4$};
            \node[blue] at (-2.8,1) {\large $L_2$};
        \end{scope}
        \begin{scope}[shift={(0,-3)}]
            \draw[blue, line width=1.5] (3,0) -- (2,1.7) -- (-2,1.7) -- (-3,0) -- cycle;
            \draw[line width=1.5] plot[smooth, tension=1] coordinates {(3,0) (2.2,0.6) (1.7,0.4) (1.2,0.7)};
            \draw[line width=1.5] plot[smooth, tension=1] coordinates {(-3,0) (-2.2,0.6) (-1.7,0.4) (-1.2,0.7)};
            \draw[line width=1.5] plot[smooth, tension=1] coordinates {(2,1.7) (1.3,1.3) (1.2,0.7)};
            \draw[line width=1.5] plot[smooth, tension=1] coordinates {(-2,1.7) (-1.3,1.3) (-1.2,0.7)};
            \draw[line width=1.5] plot[smooth, tension=1] coordinates {(-1.2,0.7) (-0.4,1) (0, 0.6) (0.4,1) (1.2, 0.7)};
            \node[blue, above right=0.1pt] at (2, 1.6) {\large $z_4$};
            \node[blue] at (3.2, -0.2) {\large $z_3$};
            \node[blue, above left=0.1pt] at (-2, 1.6) {\large $z_1$};
            \node[blue] at (-3.2, -0.2) {\large $z_2$};

            \node[blue] at (0,1.9) {\large $L_1$};
            \node[blue] at (0,-0.3) {\large $L_3$};
            \node[blue] at (2.8,1) {\large $L_4$};
            \node[blue] at (-2.8,1) {\large $L_2$};

            \node at (1.9,1) {\Huge $g_4$};
            \node at (-1.9,1) {\Huge $g_2$};
            \node at (0,1.3) {\Huge $g_1$};
            \node at (0,0.3) {\Huge $g_3$};
        \end{scope}
        \end{tikzpicture}
        
        \caption{Second case. Top: Skeleton. Bottom: Subharmonic extension}
        \label{fig:trape2}
    \end{figure}

    We set $K_1$ and $K_2$ to be the images of $\gamma_{1}$ and $\gamma_{2}$ up to $w$ respectively. Then for any $z\in K_1\cap K_3$, we also have $g_1(z) = g_2(z) =g_3(z)$, but the only point on $K_1$ that would satisfy this equality is $w$. Thus we have \begin{equation*}
        K_i \cap K_j = \set{w} , \quad i\neq j \in \set{1,2,3}.
    \end{equation*}
    We now set $K_1', K_2'$ and $K_3'$ to be the reflections of $K_1, K_2$ and $K_3$ against $\ell$ respectively. Then the compact set \begin{equation*}
        K:= K_1\cup K_2 \cup K_3 \cup K_1'\cup K_2' \cup K_3'
    \end{equation*} forms a tree which allows $g$ to extend to a globally continuous subharmonic function. The result then follows.
\end{proof}

\section{Strict descent condition}\label{sec_sd}
\subsection{Motivating example: generic quadrilateral}
It is still an open problem if every convex quadrilateral admits an electrostatic skeleton. Nevertheless, if we set $g_{\max} :=\max\set{g_1,g_2,g_3,g_4}$, numerical simulations suggest that the measure $(2\pi)^{-1}\Delta g_{\max} $ has a tree-like support, making such a measure the skeleton. In this section, we will explore the structure of this measure, which will motivate the algorithm used to construct the electrostatic skeleton under the condition of strict descent.

The important observation here is that the support of $\Delta g_{\max}$ consists of corners of the level sets $g_{\max}^{-1}(-t)$, where $t>0$ is small enough that the level set is an analytic quadrilateral, with each side being a segment of $g_i^{-1}(t)$. As we increase $t$, one of the following three things will happen to $g_{\max}^{-1}(-t)$:
\begin{figure}[h!]
    \centering
    \includegraphics[width=0.5\linewidth]{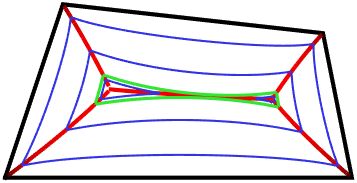}
    \caption{Shrinking level sets in a thin quadrilateral, phase transition highlighted in green}
    \label{fig:placeholder1}
\end{figure}

\begin{enumerate}
    \item it vanishes to a single point (convex kite shapes),
    \item it splits into two analytic triangles (quadrilaterals that are thin, see Figure~\ref{fig:placeholder1}),
    \item it converts into one single analytic triangle (quadrilaterals with a significantly shorter side, see Figure~\ref{fig:placeholder2}).
\end{enumerate}

\begin{figure}[h!]
    \centering
    \includegraphics[width=0.5\linewidth]{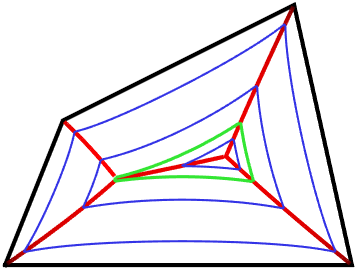}
    \caption{Shrinking level sets in a quadrilateral with a significantly shorter side, phase transition highlighted in green}
    \label{fig:placeholder2}
\end{figure}

It is worth paying attention to the critical time when the phase transitions of the latter two cases happen. For the second case, the phase transition happens when two non-adjacent analytic arcs meet. For the third case, it happens when two adjacent vertices meet. In either case, the vertices of the analytic triangles are still in the support in $\Delta g_{\max}$

We note that the analytic quadrilaterals and triangles that arise as the level sets of $g_{\max}$ share some geometric properties:

\begin{enumerate}
    \item they are both piecewise analytic Jordan domains enclosed by finitely many segments of level sets $g_i^{-1}(-t)$'s
    for a fixed $t\geq 0$,
    \item on the segment of level set $g_i^{-1}(-t)$, the gradient $\nabla g_i$ points toward the outside of the domain (the edges concave inwards),
    \item at each vertex, the internal angles of the domain is strictly less than $\pi$.
\end{enumerate}

\begin{figure}[h!]
    \centering
    \includegraphics[width=0.5\linewidth]{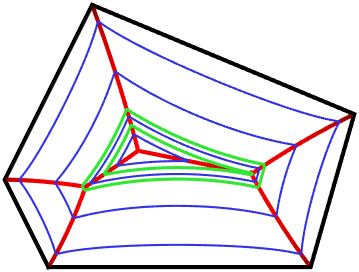}
    \caption{Multiple phase transitions occur when shrinking the level sets inside the pentagon}
    \label{fig:placeholder3}
\end{figure}

We will call the analytic polygons that satisfy these three properties the \textit{regular loops} of the polygon. The vertices of the regular loops will be the building blocks for electrostatic skeletons. The main idea of the skeleton building algorithm is to find a subharmonic extension $\tilde{g}$ of $g$ inside the convex polygon that satisfies the following:

\begin{quote}
    the level set $\tilde{g}^{-1}(-t)$ is a regular loop which shrinks as we increase $t$. The level set $\tilde{g}^{-1}(-t)$ will go through phase transitions and decompose into smaller regular loops (see Figure~\ref{fig:placeholder3}). After finitely many phase transitions, all the regular loops vanish to single points.
\end{quote}
We define a \textit{critical loop} as the curve that arises during the phase transition of a regular loop. The more rigorous definitions of regular and critical loops will be introduced in Section~\ref{algorithm} after we introduce the proper notations.

The condition of strict descent is posed to assure that during the shrinking, the internal angles of the loops remain strictly less than $\pi$. This, in practice, always happens for the level sets of $\max\set{g_1,\dots,g_n}$. However, showing such a property for $\max\set{g_1,\dots,g_n}$ (or any subharmonic extensions of $g$) is difficult and might require delicate estimates of Schwarz–Christoffel maps. Thus in this paper, we will focus on showing the condition of strict descent implies the existence of electrostatic skeleton, instead of proving the condition.

\subsection{Parametrization of the zero sets}
As we can see in the proof of Theorem~\ref{theo_trap}, it is convenient to construct the electrostatic skeleton when we can control when two zero sets intersect for the first time. In this section, we will look further into the geometry of the zero sets with the strict descent condition. Again we assume the notations from Section~\ref{notation}.

\begin{lemma}\label{unique_max}
    The Schwarz reflection $g_i$ has a unique maximum on $S_{i,j}$ for all $j\neq i$. 
\end{lemma}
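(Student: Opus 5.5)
The plan is to combine the global shape of $S_{i,j}$ from Lemma~\ref{no_branch} with a local analysis at critical points, where the strict descent condition (assumed throughout this section) does the work. Parametrize $S_{i,j}$ by an analytic map $\gamma$ with $\gamma'\neq 0$, and set $\phi := g_i\circ\gamma = g_j\circ\gamma$.

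\emph{Existence of a maximum.} By Lemma~\ref{no_branch}, $S_{i,j}$ is a single analytic curve. It either starts at the finite point $z_{i,j}$ and runs to $\infty$, or, in the parallel case, runs from $\infty$ to $\infty$.

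Since $g_i(z)=-g(\text{reflection of } z)$ and $g(w)=\log|w|+O(1)$, we have $\phi(t)\to-\infty$ at every end of the curve going to $\infty$. In the intersecting case $\phi$ extends continuously to $z_{i,j}$ with the finite value $-g(z_{i,j})$. Hence $\phi$ attains a maximum.

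Moreover, $\phi$ is real-analytic and nonconstant, so its critical points are isolated. It therefore suffices to show that every critical point of $\phi$ in the interior of the curve is a strict local maximum. Indeed, a continuous function on an interval whose interior critical points are all strict local maxima has at most one such point, since between two local maxima there would be an interior local minimum. In the intersecting case the endpoint $z_{i,j}$ also needs a remark: if the maximum were attained only at $z_{i,j}$, uniqueness is immediate. If it were attained in the interior, the same single-hump argument shows the value at the endpoint is strictly smaller.

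\emph{Local analysis at a critical point.} Let $t_0$ be a critical point, $p=\gamma(t_0)$ and $c=\phi(t_0)$. Differentiating $g_i\circ\gamma=g_j\circ\gamma$ gives
\[
\nabla g_i(p)\cdot\gamma'(t_0)=\nabla g_j(p)\cdot\gamma'(t_0)=0,
\]
so $\nabla g_i(p)$ and $\nabla g_j(p)$ are parallel. If either gradient vanished, their dot product would be $0$, contradicting strict descent. Hence both are nonzero and point in opposite directions.

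Next we use convexity. Points of $W_{i,j}$ reflect across $\ell_i$ into the exterior of $\Omega$, so $g_i<0$ there, and in particular $c<0$. By Lemma~\ref{convex}, transported by the reflection across $\ell_i$, the superlevel set $A_i=\{g_i>c\}$ is a convex set bounded by the analytic convex curve $g_i^{-1}(c)$, and $\nabla g_i$ points into $A_i$. The same holds for $A_j=\{g_j>c\}$.

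At $p$ the two boundary curves share the tangent line $T$ orthogonal to the common gradient direction. Because the gradients are opposite, $A_i$ and $A_j$ lie in opposite closed half-planes determined by $T$, so $A_i\cap A_j=\emptyset$. Any point $q\in S_{i,j}$ near $p$ with $\phi>c$ would lie in $A_i\cap A_j$, which is impossible. Thus $p$ is a local maximum of $\phi$, and it is strict because critical points are isolated and $\phi$ is nonconstant.

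\emph{Main obstacle.} The delicate step is justifying that $\{g_i>c\}$ really is convex and lies on one side of its tangent line. This requires checking that the reflection of $W_{i,j}$ lands in the exterior region where Lemma~\ref{convex} applies, that $c<0$ so the level curve is a genuine equipotential with $r>1$, and that the half-plane separation holds even though $A_i$ is cut off by $\partial W_{i,j}$. I would handle this by working with the full reflected convex domains $\{g_i>c\}\subset\C$ before intersecting with $W_{i,j}$.
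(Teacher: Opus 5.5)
Your argument is correct, but it proves the lemma only for polygons satisfying the strict descent condition, and it takes a different route from the paper. The paper never examines critical points of $g_i$ along $S_{i,j}$.
\begin{itemize}
\item In the adjacent case it notes that $g_i(z_i)=0$ while $g_i<0$ on the interior of $W_{i,i+1}$.
\item Otherwise it takes the first time $t_0$ at which the convex superlevel sets $\{g_i\ge -t\}$ and $\{g_j\ge -t\}$ in $W_{i,j}$ touch. Any $z\in S_{i,j}$ with $g_i(z)=-t$ lies in both sets, so $t\ge t_0$. Strict convexity (Lemma~\ref{convex}) makes the contact set a single point, which is therefore the unique maximizer.
\end{itemize}
That argument uses nothing beyond convexity of $\Omega$, which matches the lemma as stated. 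Your local method genuinely needs strict descent. Without it you cannot exclude a critical point where $\nabla g_i$ and $\nabla g_j$ point the same way, for example a local minimum of $g_i|_{S_{i,j}}$, and then the single-hump step fails. Proposition~\ref{sd_con} and everything after it assume strict descent, so nothing downstream is lost. Still, you should state the hypothesis explicitly rather than regard the lemma as proved in general.

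In exchange, your route gives more than uniqueness. It shows $g_i|_{S_{i,j}}$ has at most one interior critical point, so it is monotone on each side of the maximum; this is essentially Proposition~\ref{sd_con}.

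The disjointness $\{g_i>c\}\cap\{g_j>c\}=\emptyset$ you derive is global, not local. Combined with strict convexity, it shows at once that an interior critical point is the unique global maximizer. The discussion of isolated critical points and of local minima between maxima is then unnecessary.

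Do keep your separate treatment of $z_{i,j}$. Suppose the angle $\alpha$ of $W_{i,j}$ at $z_{i,j}$ exceeds $\pi/2$, as for $L_1,L_3$ in a regular $n$-gon with $n\ge 9$. Then the derivative of $g_i$ along $S_{i,j}$ leaving $z_{i,j}$ is $|\nabla g(z_{i,j})|\cos\alpha<0$, so the maximum sits at the corner. There the two level curves cross rather than touch, so the paper's ``tangent'' description of the contact point does not literally apply. Its uniqueness conclusion still holds by convexity.
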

\begin{proof}
    If $L_i$ and $L_j$ are adjacent edges (w.l.o.g. we assume $j=i+1$), it is clear that $g_i<0$ in the interior of $W_{i,j}$ and $g_i(z_i) = 0$. So the result follows trivially. Now suppose $L_i$ and $L_j$ are not adjacent. We note that the function \begin{equation*}
        t \mapsto d(\set{z:g_i(z)\geq-t}, \set{z:g_j(z) \geq -t})
    \end{equation*} is continuous. Let $t_0>0$ be the first time this decreasing function is zero. Then $\set{z:g_i(z)\geq-t}$ and $\set{z:g_j(z) \geq -t}$ intersect when $t=t_0$ and are disjoint when $t<t_0$. It follows that $g_i^{-1}(-t_0)$ and $g_j^{-1}(-t_0)$ are tangent. By Lemma~\ref{convex}, $g_i^{-1}(-t_0)$ and $g_j^{-1}(-t_0)$ are convex curves and thus can only be tangent at a unique point. That tangent point is thus the global maximum of both $g_i$ and $g_j$ on $S_{i,j}$.
\end{proof}

The reflection functions $g_i$ and $g_j$ share the same global maximum on $S_{i,j}$. We shall denote this maximum by $m_{i,j}$. When $j=i+1$, the unique maximum is $z_i=m_{i,i+1}$. When $L_i$ and $L_j$ are disjoint, $m_{i,j}$ is in the interior of $W_{i,j}$. Lemma~\ref{no_branch} and Lemma~\ref{unique_max} together imply the following. 

\begin{proposition}[Strict descent condition reformatted]\label{sd_con}
    Given a convex polygon $\Omega$ that satisfies the \textit{strict descent condition}, the following conditions hold: \begin{enumerate}
        \item for any pair of adjacent edges $L_i$ and $L_{i+1}$, we can parametrize $S_{i,i+1}$ as a curve $\gamma_{i,i+1}$ such that \begin{equation*}
            \gamma_{i,i+1}(0) = z_i, \quad g_i(\gamma_{i,i+1}(t)) = g_{i+1}(\gamma_{i,i+1}(t)) = -t,
        \end{equation*}
        \item for any pair of non-adjacent edges $L_i$ and $L_j$, let $m_{i,j}$ be the unique global maximum shared by $g_i$ and $g_j$ on $S_{i,j}$ and set $t_{i,j} = -g_i(m_{i,j})$. Then we can parametrize $S_{i,j}$ as two curves $\gamma_{i,j}$ and $\eta_{i,j}$ meeting at $m_{i,j}$ such that\begin{align*}
            g_i(\gamma_{i,j}(t)) = g_j(\gamma_{i,j}(t)) =g_i(\eta_{i,j}(t)) = g_j(\eta_{i,j}(t)) = -t,\\
            \gamma_{i,j}(t_{i,j}) = \gamma_{i,j}(t_{i,j}) = \eta_{i,j}(t_{i,j}) = \eta_{i,j}(t_{i,j}) = m_{i,j}.
        \end{align*}
    \end{enumerate}
\end{proposition}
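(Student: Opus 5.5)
The plan is to show that, under the strict descent condition, the restriction of $g_i$ (equivalently $g_j$) to the analytic curve $S_{i,j}$ has no critical points other than the maximum $m_{i,j}$ in the non-adjacent case, and none at all in the interior of $W_{i,j}$ in the adjacent case. Once this is established, $g_i$ restricted to each branch of $S_{i,j}$ is strictly monotone, and we can reparametrize that branch by the value $-t = g_i$.

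By Lemma~\ref{no_branch}, $S_{i,j}$ is a single analytic curve. In the adjacent case it is an arc emanating from $z_i$ and leaving every compact set. In the non-adjacent case it is a curve with both ends escaping to infinity; this holds both when the lines meet (the curve avoids $\partial W_{i,j}$ except at $z_{i,j}$, which lies outside $\Omega$, hence is an endpoint) and when they are parallel. Choose any regular parametrization $\sigma$ of $S_{i,j}$. Regularity of $S_{i,j}$ is available because the strict descent condition forces $\nabla(g_i-g_j)\neq 0$ on $S_{i,j}$. Indeed, if $\nabla g_i=\nabla g_j$ at a point of $S_{i,j}$, the gradients are parallel with nonnegative dot product. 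By strict descent this can only happen if both vanish, and $\nabla g_i\neq0$ away from the (reflected) polygon because $g$ has no critical points outside $\Omega$, being $\log|f|$ with $f$ conformal.

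Next consider a critical point $s_0$ of $g_i\circ\sigma$. There $\nabla g_i\cdot\sigma'=0$ and also $\nabla g_j\cdot\sigma'=0$, since $g_i=g_j$ along $\sigma$. Both gradients are therefore normal to $\sigma'$, hence parallel. Strict descent gives $\nabla g_i\cdot\nabla g_j<0$, so the gradients point to opposite sides of the curve. Differentiating $g_i-g_j$ along the normal, we find that $S_{i,j}$ is regular at $s_0$. Moreover, the level sets $g_i^{-1}(-t_0)$ and $g_j^{-1}(-t_0)$ are tangent at $\sigma(s_0)$, with the superlevel sets $\{g_i\ge -t_0\}$ and $\{g_j\ge -t_0\}$ lying on opposite sides. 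Both superlevel sets are convex by Lemma~\ref{convex} (they are reflected Green level regions).

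The main obstacle is to conclude that such a point is a strict local maximum of $g_i\circ\sigma$, so that no local minimum or saddle-type critical point occurs. I would argue as follows. Near $\sigma(s_0)$, every point of $S_{i,j}$ has $g_i=g_j$, and both can be $\ge -t_0$ only inside the intersection of the two convex sets. This intersection is just the tangency point, because a tangent line separates them and strict convexity of the level curves prevents contact along a segment. Hence $g_i\circ\sigma<-t_0$ nearby, so every critical point is a strict local maximum. Along a one-dimensional curve, two strict local maxima would force a local minimum between them, which is impossible. Therefore there is at most one critical point, and it is the global maximum $m_{i,j}$ from Lemma~\ref{unique_max}.

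In the adjacent case, $m_{i,i+1}=z_i$ is the endpoint, so $g_i$ is strictly decreasing from $0$ along the arc. In the non-adjacent case it is strictly decreasing on each of the two branches away from $m_{i,j}$. It remains to check that the values cover $[t_{i,j},\infty)$, respectively $[0,\infty)$. For this I would use that the ends of $S_{i,j}$ go to infinity, where $g_i\to-\infty$ because $g_i(z)=-\log|z|+O(1)$. Reparametrizing by $t=-g_i$ then gives $\gamma_{i,i+1}$ in the adjacent case, and $\gamma_{i,j},\eta_{i,j}$ meeting at $m_{i,j}$ in the non-adjacent case.
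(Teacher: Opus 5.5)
Your argument is essentially the paper's. A critical point of $g_i$ along $S_{i,j}$ makes $\nabla g_i$ and $\nabla g_j$ normal to the curve, hence antiparallel by strict descent, so $g_i^{-1}(-t_0)$ and $g_j^{-1}(-t_0)$ are tangent there and the point must be $m_{i,j}$. Away from $m_{i,j}$ the derivative along the curve is nonzero, and you reparametrize by $t=-g_i$.

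You also justify the step the paper only asserts (``this implies $z=m_{i,j}$''), using the supporting-line separation of the two convex superlevel sets taken inside $W_{i,j}$. That separation already shows the tangency point is the only point of $S_{i,j}$ with $g_i\ge -t_0$. So it is the global maximum directly, and your detour through strict local maxima is unnecessary.

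One claim is wrong and should be removed. Take non-adjacent $L_i,L_j$ with $\ell_i\cap\ell_j=\{z_{i,j}\}$. Then $S_{i,j}$ does not have both ends at infinity: by Lemma~\ref{no_branch} it starts at the finite point $z_{i,j}$, as your own parenthetical says, and only its other end escapes. So the component of $S_{i,j}\setminus\{m_{i,j}\}$ from $m_{i,j}$ to $z_{i,j}$ is bounded, and on it $t=-g_i$ only increases from $t_{i,j}$ up to $g(z_{i,j})$. Only the unbounded branch (the paper's $\gamma_{i,j}$) is defined for all $t\ge t_{i,j}$. The branch $\eta_{i,j}$ has a bounded parameter interval unless the two lines are parallel, as the paper notes right after the proposition.

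The statement does not prescribe the parameter domains, so your monotonicity argument still proves it. But your final step, claiming both branches cover $[t_{i,j},\infty)$ because ``the ends of $S_{i,j}$ go to infinity,'' is false in the intersecting-lines case.
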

\begin{proof}
    We first note that for $z\in S_{i,j}$, if $\nabla g_i(z)$ and $\nabla g_j(z)$ are pointed at the exact opposite direction, then have that $g_i^{-1}(g_i(z))$ and $g^{-1}_j(g_i(z))$ must be tangential at $z$. This implies that $z= m_{i,j}$. Therefore the condition of strict descent as in Definition~\ref{con_sd1} implies that on $S_{i,j}\backslash m_{i,j}$, $\nabla g_i$ and $\nabla g_j$ cannot be parallel.
    
    Note that away from the global maximum $m_{i,j}$ of $g_i$ (and $g_j$) on $S_{i,j}$, we can parametrize $S_{i,j}$ by the smooth curve $\beta$. Then we have \begin{equation*}
        \pder{}{t}g_i(\beta(t)) = \nabla g_i(\beta(t))\cdot \beta'(t) = \nabla g_j(\beta(t))\cdot \beta'(t).
    \end{equation*} 
    Since $S_{i,j}$ is smooth, we can assume that $\beta'(t)$ is nowhere vanishing. Since $\beta(t)$ is away from $m_{i,j}$, $\nabla g_i(z)$ and $\nabla g_j(z)$ are not parallel. Thus we have \begin{equation*}
        \nabla g_i(\beta(t))\cdot \beta'(t) = \nabla g_j(\beta(t))\cdot \beta'(t) \neq 0.
    \end{equation*} This implies that $g_i$ and $g_j$ are strictly monotone on $S_{i,j}$ away from $m_{i,j}$. Thus the result follows.
\end{proof}

For the notational convenience, we always assume $\gamma_{i,j}$ parametrizes one of the unbounded component of $S_{i,j}\backslash \set{m_{i,j}}$. In the case where both components of $S_{i,j}\backslash \set{m_{i,j}}$ are unbounded (as is the case where $L_i$ and $L_j$ are parallel), we assign $\gamma_{i,j}$ and $\eta_{i,j}$ arbitrarily. In consequence, the domain of $\gamma_{i,j}$ is always $[t_{i,j},\infty)$ while if $\eta_{i,j}$ exists, its domain is a half-open subinterval of $[t_{i,j},\infty)$ containing $t_{i,j}$. 

The strict descent condition thus significantly restricts the behavior of $S_{i,j}$'s and how they intersect one another. Under such a condition, given two adjacent edges $L_i$ and $L_{i+1}$ and any $t>0$, $\gamma_{i,i+1}(t)$ is the unique intersection between $g_i^{-1}(-t)$ and $g^{-1}_j(-t)$. Meanwhile, the condition of strict descent also implies that for any two non-adjacent edges $L_i$ and $L_j$, the images of $\eta_{i,j}$ and $\gamma_{i,j}$ are separated by \begin{equation*}
    g_i^{-1}([-t_{i,j},0]) \cup g_i^{-1}([-t_{i,j},0])
\end{equation*} since the two sets above intersect uniquely at $m_{i,j}$ (see Figure~\ref{fig:inter}). 

\begin{figure}[h!]
    \centering
    \begin{tikzpicture}
        \draw[blue, line width=1.5] plot[smooth, tension=1] coordinates {(-2.4, 0.6) (-2,0.6) (-0.9,1) (-0.4,0.7) (0.1,0.9)};

        \draw[blue, line width=1.5] plot[smooth, tension=0.4] coordinates {(-2.4,0.55) (-2.9,0.5) (-3.4,0.5) (-4.8,0.4) (-6.5,0)};
        \node[blue,left] at (-5,0.6) {\large $\eta_{i,j}$};
        
        \fill[teal, opacity=0.3] plot[smooth, tension=1] coordinates {(-3.9,0) (-2.2,0.57) (-0.9,0)};
        \fill[teal, opacity=0.3] plot[smooth, tension=1] coordinates {(-4.1,{-4.1*(1/3)+2.2}) (-2.2,0.63) (-1,{-1*(1/3)+2.2})};
        \fill[teal] (-2.4,0.55) circle (2.5pt);
        \node[black,above] at (-2.35,1.6) {$g_j^{-1}([-t_{i,j},0])$};
        \node[black,above] at (-2.35,-0.8) {$g_i^{-1}([-t_{i,j},0])$};
        \node[black,above] at (-2.4,0.6) {\large $m_{i,j}$};
        \node[blue,right] at (0.1,0.9) {\large $\gamma_{i,j}$};
  
        \draw[red, line width=1.5] (-7,0) -- (-0.5,0) node[red, right] {\large $\ell_i$};
        \draw[red, line width=1.5] (-7.1,-0.2) -- (-0.5,2) node[red, right] {\large $\ell_j$};
        
    \end{tikzpicture}
    \caption{Sketch of $J^\gamma_{i,j}(t)$ and $U^\gamma_{i,j}(t)$}
    \label{fig:inter}
\end{figure}

\begin{corollary}[Geometry of the level sets] \label{geo_level}
    Under the condition of strict descent, consider the level set $g^{-1}_i(-t)$ inside the wedge $W_{i,j}$, \begin{enumerate}
        \item When $0\leq t< t_{i,j}$, $g^{-1}_i(-t)$ is completely contained in one of the components of $W_{i,j}\backslash S_{i,j}$.
        \item When $t=t_{i,j}$, $g^{-1}_i(-t)$ is completely contained in the closure of one of the components of $W_{i,j}\backslash S_{i,j}$, and intersects $S_{i,j}$ only at $m_{i,j}$.
        \item When $t>t_{i,j}$ and $\eta_{i,j}(t)$ is well-defined, $g^{-1}_i(-t)$ consists of one segment connecting $\ell_i$ to $\gamma_{i,j}(t)$, one segment connecting $\ell_i$ to $\eta_{i,j}(t)$ and one connecting $\gamma_{i,j}(t)$ and $\eta_{i,j}(t)$. The first two segments are separated from the last by $S_{i,j}$.
        \item When $t>t_{i,j}$ and $t$ is outside the domain of $\eta_{i,j}$, $g_i^{-1}(-t)$ consists of one segment connecting $\ell_i$ to $\gamma_{i,j}(t)$ and the other connecting $\gamma_{i,j}(t)$ to $\ell_j$. The two are separated by $S_{i,j}$.
    \end{enumerate}
\end{corollary}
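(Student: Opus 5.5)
The plan is to study the single function $h := g_i - g_j$ on $W_{i,j}$ along the level curve $\Gamma_t := g_i^{-1}(-t)\cap W_{i,j}$. By Lemma~\ref{no_branch}, $S_{i,j}=\{h=0\}$ is one analytic curve. It separates $W_{i,j}$ into exactly two components: $C_i$, which contains $L_i$ (away from $z_{i,j}$) and on which $h>0$, and $C_j$, which contains $L_j$ and on which $h<0$. The sign convention comes from the comparison $g(z)<g(z')$ in the proof of Lemma~\ref{no_branch}. By Lemma~\ref{convex}, $\Gamma_t$ is the reflection across $\ell_i$ of an arc of a strictly convex Green equipotential, hence a connected strictly convex arc. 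Since $g_i=-t<0$ on it, its endpoints lie on $\ell_j$ or run off to infinity; they never lie on $\ell_i$, because there $g_i=-g\ge 0$. The whole statement therefore reduces to counting and locating the points of $\Gamma_t\cap S_{i,j}$, that is, the zeros of $h$ restricted to $\Gamma_t$.

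\textbf{Step 1: the intersection is exactly the parametrized points.} By Proposition~\ref{sd_con}, $g_i=-t$ on $S_{i,j}$ exactly at the parameters $\gamma_{i,j}(t)$ and $\eta_{i,j}(t)$ (when defined) for $t>t_{i,j}$, at $m_{i,j}$ for $t=t_{i,j}$, and nowhere for $t<t_{i,j}$. For $t<t_{i,j}$ this holds because $-t_{i,j}$ is the maximum of $g_i$ on $S_{i,j}$ (Lemma~\ref{unique_max}). Hence:
\begin{itemize}
\item For $t<t_{i,j}$, the connected arc $\Gamma_t$ misses $S_{i,j}$ and lies in a single component. This gives (1).
\item For $t=t_{i,j}$, $\Gamma_t$ meets $S_{i,j}$ only at $m_{i,j}$. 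There the level curves of $g_i$ and $g_j$ are tangent (the proof of Lemma~\ref{unique_max}), so $h$ restricted to $\Gamma_t$ has a zero of even order. Thus $h$ does not change sign along $\Gamma_t$, and $\Gamma_t$ stays in the closure of one component. This gives (2).
\end{itemize}

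\textbf{Step 2: transversality for $t>t_{i,j}$.} At $\gamma_{i,j}(t)$ and $\eta_{i,j}(t)$, the gradients $\nabla g_i$ and $\nabla g_j$ are not parallel, since strict descent excludes parallel gradients off $m_{i,j}$, as shown in the proof of Proposition~\ref{sd_con}. The tangent to $\Gamma_t$ is perpendicular to $\nabla g_i$, so the derivative of $h$ along $\Gamma_t$ equals $-\nabla g_j\cdot \tau$, which is nonzero. Thus $\Gamma_t$ crosses $S_{i,j}$ transversally at each of these points, and $h$ changes sign there. This is the step where strict descent is used essentially, and I expect it to be the main obstacle: one must check that $\Gamma_t$ really passes from one side of $S_{i,j}$ to the other, rather than merely touching it. The cleanest route is the sign of $\nabla h\cdot\tau$, using that each nodal region is a single component.

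\textbf{Step 3: assembling cases (3) and (4).} Now count the sign changes of $h$ along the arc $\Gamma_t$ and match them with the ends of the arc.
\begin{itemize}
\item If $\eta_{i,j}(t)$ is defined, there are two crossings, splitting $\Gamma_t$ into three pieces. The middle piece joins $\gamma_{i,j}(t)$ to $\eta_{i,j}(t)$ and lies on the opposite side of $S_{i,j}$ from the two outer pieces.
\item If only $\gamma_{i,j}(t)$ exists, there is one crossing and two pieces, in opposite components.
\end{itemize}
The remaining task is to identify where the ends of each piece lie, using the sign of $h$ there.
\begin{itemize}
\item An end landing on $\ell_j$ lies in $\overline{C_j}$.
\item An unbounded end is located by the asymptotics of the component it lies in.
\end{itemize}
The segments described in the statement as ``connecting to $\ell_i$'' are the pieces lying in $C_i$ and running toward the $\ell_i$ side of the wedge. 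I would justify this orientation by comparing with the picture at $t$ slightly larger than $t_{i,j}$ and arguing by continuity in $t$: for $t<t_{i,j}$ the whole arc lies in $C_i$, and just past $t_{i,j}$ only a small middle bubble crosses into $C_j$. This matches the separation claims in (3) and (4).
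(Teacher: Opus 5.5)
The paper states this corollary without proof, as a direct consequence of Lemma~\ref{no_branch}, Lemma~\ref{unique_max} and Proposition~\ref{sd_con}. Your Steps~1--2 are exactly those ingredients, and they are fine:
the zeros of $h=g_i-g_j$ on the level curve are precisely the parametrized points, and each is a transversal sign change because strict descent rules out parallel gradients.

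\textbf{The gap is in Step~3.} That step locates the ends of the pieces, which is the real content of (3) and (4), and it starts from a sign error. On $\ell_i$ we have $g_i=-g\le 0$, not $\ge 0$, and $g_i=-t$ there exactly at the two points where $g=t$. Let $H_i$ be the closed half-plane bounded by $\ell_i$ that contains $\Omega$. Then $A_t:=g_i^{-1}(-t)\cap H_i$ is the mirror image of the part of the strictly convex equipotential $\{g=t\}$ lying beyond $\ell_i$. It is therefore a \emph{compact} strictly convex arc with both endpoints on $\ell_i$, one on each side of $L_i$. Nothing runs off to infinity, and ``connecting $\ell_i$ to $\gamma_{i,j}(t)$'' in the statement is meant literally.

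Your discussion of unbounded ends, of pieces ``running toward the $\ell_i$ side'', and the continuity heuristic from $t$ just above $t_{i,j}$ are built on the wrong picture and prove nothing. In particular they give no control at $t=g(z_{i,j})$. There the near endpoint of $A_t$ and $\eta_{i,j}(t)$ both reach $z_{i,j}$, and case (3) turns into case (4).

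\textbf{A second problem is connectedness.} The set $\Gamma_t=g_i^{-1}(-t)\cap W_{i,j}$ need not be connected, because $A_t$ can leave the wedge through $\ell_j$ and re-enter. For instance, take $\ell_i\parallel\ell_j$ (opposite sides of a rectangle) and $t$ large. Then $A_t$ is close to a huge circular arc, and $\Gamma_t$ consists of two arcs, each joining $\ell_i$ to $\ell_j$. The piece of $g_i^{-1}(-t)$ from $\gamma_{i,j}(t)$ to $\eta_{i,j}(t)$ then lies partly outside $W_{i,j}$. So your ``three pieces of the connected arc $\Gamma_t$'' fails, and the statement must be read for $A_t$.

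\textbf{The repair is short.}
\begin{enumerate}
\item The endpoint of $A_t$ beyond $L_i$ (away from $z_{i,j}$) always lies in $\ell_i\cap W_{i,j}\setminus\{z_{i,j}\}\subset C_i$.
\item In the non-parallel case, the other endpoint lies in $W_{i,j}$ if and only if $t\le g(z_{i,j})$, since $g$ increases along $\ell_i$ away from $L_i$.
\item Traverse $A_t$ starting from the far endpoint, and use three facts: $\ell_j\cap W_{i,j}\setminus\{z_{i,j}\}\subset C_j$; every zero of $h$ met inside $W_{i,j}$ is a parametrized point at which $h$ changes sign (your Step~2); and a strictly convex arc meets $\ell_j$ at most twice.
\end{enumerate}
Counting sign changes then yields (1), (3) and (4); in (3) the middle piece may exit and re-enter through $\ell_j$.

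\textbf{Case (2) also needs a different argument.} ``Tangent, hence a zero of even order'' does not follow, since tangency allows a zero of odd order $\ge 3$. Obtain (2) in either of two ways:
\begin{itemize}
\item as the Hausdorff limit of (1) as $t\uparrow t_{i,j}$; or
\item from the disjointness of $\{g_i>-t_{i,j}\}$ and $\{g_j>-t_{i,j}\}$ in $W_{i,j}$, which is built into the first-touching argument of Lemma~\ref{unique_max}. This gives $h\ge 0$ on $g_i^{-1}(-t_{i,j})$.
\end{itemize}
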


In particular, if we assume the strict descent condition, then for any $t> t_{i,j}$, $g^{-1}_i(-t)$ will only intersect the image of $\gamma_{i,j}$ exactly once at $\gamma_{i,j}(t)$  (and that of $\eta_{i,j}$ at $\eta_{i,j}(t)$ if it exists). Let $\beta\in \set{\gamma,\eta}$. If $t$ is in the domain of $\beta_{i,j}$, it follows that one of the components of \begin{equation*}
    W_{i,j} \backslash \left(g_i^{-1}([-t,0])\cup g_j^{-1}([-t,0])\right)
\end{equation*} contains the entirety of the curve $\beta_{i,j}$ after time $t$. We denote that component as $U^\beta_{i,j}(t)$ and set \begin{equation*}
    J^\beta_{i,j}(t) = \partial U^\beta_{i,j}(t) \backslash (\ell_i \cup \ell_j).
\end{equation*} 
Then $J^\beta_{i,j}(t)$ is a curve consisting of a segment of $g_i^{-1}(-t)$ connecting $\ell_i$ to $\beta_{i,j}(t)$ and that of $g_j^{-1}(-t)$ connecting $\ell_j$ to $\beta_{i,j}(t)$. It also cuts $W_{i,j}$ into two components (see Figure \ref{fig:inward}).
    
\begin{figure}[h!]
    \centering
    \begin{tikzpicture}
        
        \fill[orange, opacity=0.2] (-0.5,2) -- (-1.3,{-1.3*(1/3)+2.2}) -- (-1.3,1.2) -- (-1.7,0.7)-- (-1.3,0.55) -- (-1.25,0.4) -- (-1.3, 0) -- (-0.5,0) arc(-30:30:2)-- cycle;

        \fill[orange, opacity=0.2] (4,2) -- (3.5,1.5) -- (3.6, 1.2) -- (3.5,0.9) --(3.8,0.5) --  (3.9,0) -- (5,0) arc(0:45:3) -- cycle;
        
        \draw[red, line width=1.5] plot[smooth, tension=1] coordinates {(-2.2,0.6) (-0.9,0.8) (-0.4,0.7) (0.1,0.9)};

        \draw[red, line width=1.5] plot[smooth, tension=1] coordinates {(-2.25,0.6) (-2.9,0.5) (-3.4,0.5) (-3.8,0.4)};
        \node[red,left] at (-3.8,0.4) {\large $\eta_{i,j}$};
        
        \draw[red, line width=1.5] plot[smooth, tension=1] coordinates {(2,0) (2.7,0.2) (3.5,0.9) (4.3,0.8) (5,1.6)};
        \draw[teal, line width=1.5] plot[smooth, tension=1] coordinates {(-3.2,0) (-2.2,0.6) (-1.5,0)};
        \draw[teal, line width=1.5] plot[smooth, tension=1.5] coordinates {(-3.1,{-3.1*(1/3)+2.2}) (-2.2,0.6) (-1.5,{-1.5*(1/3)+2.2})};
        \fill[teal] (-2.25,0.6) circle (2.5pt); 
        \fill[teal] (2,0) circle (2.5pt);
        \node[teal,above] at (-2.35,0.6) {\large $m_{i,j}$};
        \node[teal,below] at (2,0) {\large $m_{i,i+1} = z_i$};
        
        \draw[blue, line width=1.5] plot[smooth, tension=1] coordinates {(-1.3,{-1.3*(1/3)+2.2}) (-1.3,1.2) (-1.7,0.7)};
        \draw[blue, line width=1.5] plot[smooth, tension=1] coordinates {(-1.3,0) (-1.3,0.5) (-1.7,0.7)};
        \fill[blue] (-1.7,0.7) circle (2.5pt);
        \draw[blue, line width=1.5] plot[smooth, tension=1] coordinates {(3.5,1.5) (3.6, 1.2) (3.5,0.9)};
        \draw[blue, line width=1.5] plot[smooth, tension=1] coordinates {(3.9,0) (3.8,0.5) (3.5,0.9)};
        \fill[blue] (3.5,0.9) circle (2.5pt); 
        \node[red,right] at (0.1,0.9) {\large $\gamma_{i,j}$};
        \node[red,right] at (5,1.6) {$\gamma_{i,i+1}$};
        \node[blue,below] at (-1.3,0) {\small $J^\gamma_{i,j}(t)$};
        \node[blue,below] at (3.9,0) {\small $J^\gamma_{i,i+1}(t)$};
        \node[orange] at (-0.6,1.4) {\Large $U^\gamma_{i,j}(t)$};
        \node[orange] at (4.7,0.4) {\Large $U^\gamma_{i,i+1}(t)$};

        \draw[red, line width=1.5] (-3.5,0) -- (-0.5,0) node[red, right] {\large $\ell_i$};
        \draw[red, line width=1.5] (-3.5,1) -- (-0.5,2) node[red, right] {\large $\ell_j$};
        \draw[red, line width=1.5] (2,0) -- (5,0) node[red, right] {\large $\ell_{i+1}$};
        \draw[red, line width=1.5] (2,0) -- (4.15,2.1) node[red, right] {\large $\ell_i$};
    \end{tikzpicture}
    \caption{Sketch of $J^\gamma_{i,j}(t)$ and $U^\gamma_{i,j}(t)$}
    \label{fig:inward}
\end{figure}

Note that $J^{\beta}_{i,j}$ is a continuous function on the space of compact sets on $\C$ under the Hausdorff topology. Moreover, $J^{\beta}_{i,j}(t_{i,j}):= \lim_{t\downarrow t_{i,j}}J^\beta_{i,j}(t)$ is either $L_i \cup L_j$ (when $L_i$ and $L_j$ are adjacent), or consisting of parts of $g^{-1}_i(-t_{i,j})$ and $g^{-1}_j(-t_{i,j})$ forming a cusp. The next lemma gives us some geometric intuition about the curve $J^\beta_{i,j}$ and domain $U^\beta_{i,j}$.

\begin{lemma}\label{cuts}
    Under the condition of strict descent as in Proposition~\ref{sd_con}, let $t\geq t_{i,j}$ be such that $\beta_{i,j}(t)$ is a well-defined point in $W_{i,j}$. Then the angle $J^\beta_{i,j}(t)$ makes toward $U^{\beta}_{i,j}(t)$ is strictly less than $\pi$.
\end{lemma}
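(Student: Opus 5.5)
The curve $J^\beta_{i,j}(t)$ is smooth except at the single point $p=\beta_{i,j}(t)$. There two analytic arcs meet: an arc $A_i$ of $g_i^{-1}(-t)$ coming from $\ell_i$, and an arc $A_j$ of $g_j^{-1}(-t)$ coming from $\ell_j$. So the plan is to compute the interior angle at $p$ from the directions of $\nabla g_i(p)$ and $\nabla g_j(p)$.

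\emph{Orientation.} By definition, $U^\beta_{i,j}(t)$ is a component of the complement of $g_i^{-1}([-t,0])\cup g_j^{-1}([-t,0])$. Near $p$ it therefore lies in $\{g_i<-t\}\cap\{g_j<-t\}$. Both gradients are nonzero at $p$, since $g_i,g_j$ are reflections of a Green's function with no critical points outside a convex set. Hence near $p$ the domain $U^\beta_{i,j}(t)$ is the intersection of two half-planes, up to first order:
\begin{equation*}
H_i=\{v:\nabla g_i(p)\cdot v<0\},\qquad H_j=\{v:\nabla g_j(p)\cdot v<0\}.
\end{equation*}
The intersection of two open half-planes through the origin has opening angle $\pi-\theta$, where $\theta\in[0,\pi]$ is the angle between $\nabla g_i(p)$ and $\nabla g_j(p)$. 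The angle is strictly less than $\pi$ exactly when $\theta>0$, that is, unless the two gradients point in the same direction. One point needs care: locally, $U^\beta_{i,j}(t)$ must be the wedge $H_i\cap H_j$ rather than a complementary region. I would check this using Corollary~\ref{geo_level}: $A_i$ and $A_j$ are the only pieces of the boundary through $p$, and $S_{i,j}$ leaves $p$ into $U^\beta_{i,j}(t)$ along $\beta_{i,j}$ for later times, where $g_i=g_j<-t$.

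\emph{Excluding parallel gradients when $t>t_{i,j}$.} Here $p\neq m_{i,j}$. The first step of the proof of Proposition~\ref{sd_con} shows that on $S_{i,j}\setminus\{m_{i,j}\}$ the gradients are not parallel. This uses Definition~\ref{con_sd1}, together with the fact that anti-parallel gradients force the level curves to be tangent, which by Lemma~\ref{convex} and Lemma~\ref{unique_max} happens only at $m_{i,j}$. So $\theta\in(0,\pi)$ and the angle lies strictly between $0$ and $\pi$.

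\emph{The boundary case $t=t_{i,j}$.} When $L_i,L_j$ are adjacent, $J=L_i\cup L_j$ and the angle toward $U$ is the interior angle of the convex polygon at $z_i$, which is less than $\pi$. For non-adjacent edges, $p=m_{i,j}$ and the gradients are anti-parallel, so $\theta=\pi$. The limiting curve is then a cusp, and the angle toward $U$ is $0<\pi$. I need to confirm that $U$ sits on the cusp side rather than the reflex side. This follows from Corollary~\ref{geo_level}(2): $g_i^{-1}(-t_{i,j})$ and $g_j^{-1}(-t_{i,j})$ lie in closures of opposite nodal components and touch only at $m_{i,j}$. Moreover, $\beta_{i,j}$ emanates from $m_{i,j}$ between them, so $U$ is the thin region between the two tangent convex arcs.

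\emph{Main obstacle.} The hardest step is this orientation and sidedness check: making rigorous that $U^\beta_{i,j}(t)$ corresponds to $H_i\cap H_j$ and not to another sector cut out by the two level curves. I would handle it with the monotonicity from Proposition~\ref{sd_con}: $g_i(\beta_{i,j}(s))=-s<-t$ for $s>t$. This places the tail of $\beta_{i,j}$, which lies in $U$, inside $\{g_i<-t,\ g_j<-t\}$. The tangent direction $\beta_{i,j}'(t)$ then lies in the closed sector $\overline{H_i}\cap\overline{H_j}$, and $\beta_{i,j}'(t)\neq 0$.
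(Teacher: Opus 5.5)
Your proposal is correct and follows essentially the same route as the paper. Both arguments reduce the angle at $\beta_{i,j}(t)$ to the relative position of $\nabla g_i$ and $\nabla g_j$ there, use strict descent (non-parallel gradients off $m_{i,j}$) to exclude the straight-angle case, and treat $t=t_{i,j}$ identically (the polygon's interior angle, or a cusp of angle $0$). The only difference is bookkeeping: the paper reads the angle off in the frame $(T_{i,j},N_{i,j})$ using $\nabla g_i\cdot T_{i,j}=\nabla g_j\cdot T_{i,j}<0$ together with the sign coming from subharmonicity of $\max(g_i,g_j)$, whereas you read it off directly as $\pi-\theta$ from the local description $U^\beta_{i,j}(t)=\{g_i<-t\}\cap\{g_j<-t\}$. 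That local description also makes the step you flagged as the ``main obstacle'' immediate, since $g_i,g_j\le 0$ on $W_{i,j}$ and so no complementary sector can occur.
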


\begin{proof}
    It is easy to check that in the case when $t = t_{i,j}$, the angle $J_{i,j}^\beta(t)$ makes is either $0$ (when $1<|i-j|<n-1$) or an inner angle of the polygon (when $|i-j|\in \set{1,n-1}$). Thus we assume $t> t_{i,j}$. Note that since $W_{i,j} \backslash S_{i,j}$ has two components, the one touching $\ell_i$, which we denote by $\Omega_i$, satisfies \begin{equation*}
        \Omega_i = \set{z\in W_{i,j}:g_i(z)>g_j(z)}.
    \end{equation*}
    Note that the curve $\beta_{i,j}$ is smooth. We can let $T_{i,j}(t)$ be the normalized velocity vector of the curve at $\beta_{i,j}(t)$ and $N_{i,j}(t)$ the normal vector at $\beta_{i,j}(t)$ pointing toward $\Omega_i$. Then the density of $ \Delta \max(g_i,g_j)$ at $\beta_{i,j}(t)$ is given by \begin{equation*}
        -\nabla g_i(\beta_{i,j}(t)) \cdot N_{i,j}(t) + (-\nabla g_j(\beta_{i,j}(t)) \cdot (- N_{i,j}(t)) = \nabla (g_i-g_j)(\beta_{i,j}(t)) \cdot N_{i,j}(t).
    \end{equation*}
    This quantity should be non-negative as $\max(g_i,g_j)$ is subharmonic at $\beta_{i,j}(t)$. Meanwhile, the strict descent condition implies $\nabla g_i$ and $\nabla g_j$ are not parallel at $\beta_{i,j}(t)$. Thus \begin{equation*}
        \nabla g_i(\beta_{i,j}(t)) \cdot T_{i,j}(t) = \nabla g_j(\beta_{i,j}(t)) \cdot T_{i,j}(t) < 0.
    \end{equation*} 
    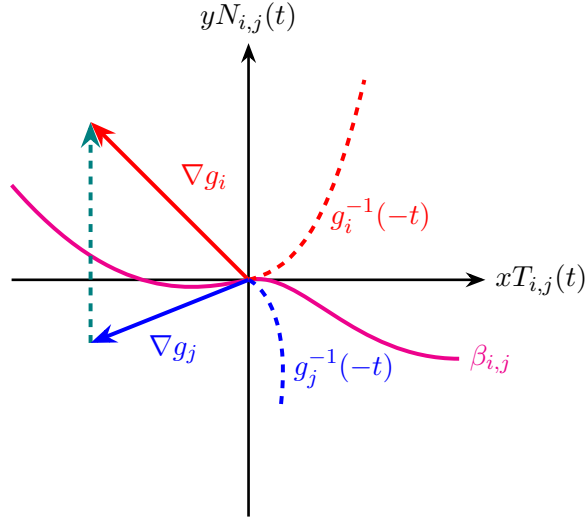
\begin{figure}[h!]
        \centering
    \begin{tikzpicture}[>=Stealth, scale=0.7,line width=1.5pt]
    \draw[->, black, line width=1pt] (-4.5,0) -- (4.5,0) node[right] {\large $x T_{i,j}(t)$};
    \draw[->, black, line width=1pt] (0,-4.5) -- (0,4.5) node[above] {\large $y N_{i,j}(t)$};

    \draw[magenta] (-4.5,1.8) 
        .. controls (-2.5,-0.5) and (-1,-0.2) .. (0,0)
        .. controls (1,0.2) and (2,-1.5) .. (4,-1.5) 
        node[right] {\large $\beta_{i,j}$};

    \draw[->, teal, dashed] (-3, -1.2) -- (-3, 3);
    \draw[->, red] (0,0) -- (-3, 3) node[pos=0.5, above right=0.1pt] {\large $\nabla g_i$};
    \draw[->, blue] (0,0) -- (-3, -1.2) node[pos=0.7, below right] {\large $\nabla g_j$};

    \draw[red, dashed] (0,0) 
        .. controls (0.8,0.2) and (1.5,0.8) .. (2.2, 3.8) 
        node[right, pos=0.6] {\large $g_i^{-1}(-t)$};

    \draw[blue, dashed] (0,0) 
        .. controls (0.4,-0.2) and (0.8,-1) .. (0.6, -2.5) 
        node[right, pos=0.8] {\large $g_j^{-1}(-t)$};
\end{tikzpicture}
        \caption{Coordinate plane by $T_{i,j}(t)$ and $N_{i,j}(t)$. Note that $\nabla g_i$ and $\nabla g_j$ do not necessarily lie in different quadrants}
        \label{fig:coord}
    \end{figure}
    
    It follows that in the coordinate plane $(T_{i,j}(t),N_{i,j}(t))$, $\nabla g_i(\beta_{i,j}(t))$ has the same $x$-coordinate as $\nabla g_j(\beta_{i,j}(t))$ but strictly larger $y$-coordinate (see Figure~\ref{fig:coord}). The statement hence follows.
\end{proof}

\section{Skeleton building algorithm} \label{algorithm}
In this section, we will prove Theorem~\ref{theo_descent} to show that the strict descent condition implies the existence of electrostatic skeleton. To prove the theorem, we first need to properly define the concepts of regular and critical loops.

\subsection{Regular loops}
Note that if $t>0$ is in the domains of both $\gamma_{i,j}$ and $\gamma_{j,k}$, then $\gamma_{i,j}(t)$ and $\gamma_{j,k}(t)$ are two points on $g_j^{-1}(-t)$. We shall denote the closed segment of $g_j^{-1}(-t)$ connecting the two points by the tuple $(t, \gamma_{i,j}, g_j, \gamma_{j,k})$ (see Figure~\ref{fig:arc}).

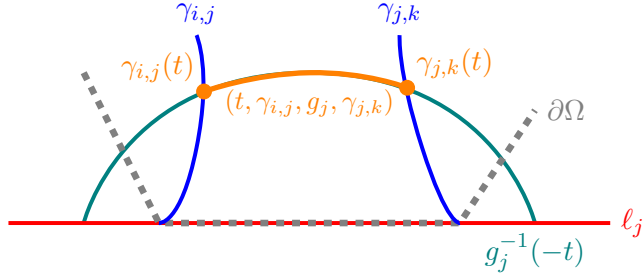
\begin{figure}[h!]
    \centering
    \begin{tikzpicture}
        \draw[teal, line width=1.5pt] plot[smooth, tension=1.5] coordinates {(-3,0) (0,2) (3,0)} node[teal, below] {\large $g_j^{-1}(-t)$};
        \draw[red, line width=1.5] (-4,0) -- (4,0) node[red, right] {\large $\ell_j$};
        \draw[gray, line width=2.5pt, dashed] (-3,2) -- (-2,0) -- (2,0) -- (3,1.5) node[gray, right] {\large $\partial\Omega$};
        \draw[blue, line width=1.5pt] plot[smooth, tension=1.5] coordinates {(-2,0) (-1.5,1) (-1.5,2.5)} node[blue, above] {\large $\gamma_{i,j}$};
        \draw[blue, line width=1.5pt] plot[smooth, tension=1.5] coordinates {(2,0) (1.5,1) (1.2,2.5)} node[blue, above] {\large $\gamma_{j,k}$};
        
        \draw[orange, line width=2pt] plot[smooth, tension=1] coordinates {(-1.4,1.75) (0, 2) (1.3,1.8)};
        \node[orange] at (0, 1.6) {$(t,\gamma_{i,j}, g_j, \gamma_{j,k})$};
        \fill[orange] (-1.4,1.75) circle (3pt) node[orange, above left] {\large $\gamma_{i,j}(t)$};
        \fill[orange] (1.3,1.8) circle (3pt) node[orange, above right] {\large $\gamma_{j,k}(t)$};

    \end{tikzpicture}
    \caption{Sketch of $(t,\beta_{i,j}, g_j, \beta_{j,k})$, in the case when $i+1=j=k-1$}
    \label{fig:arc}
\end{figure}

Similarly, we can define $(t, \gamma_{i,j}, g_j, \eta_{j,k})$ and $(t, \eta_{i,j}, g_j, \gamma_{i,k})$. We set \begin{align}
    (t,\beta_{i_0,i_1}, g_{i_1}, \beta_{i_1,i_2}, g_{i_2},\dots,\beta_{i_{n-1},i_n},g_{i_n},\beta_{i_n,i_{n+1}}) \nonumber\\= \bigcup_{k=1}^n (t,\beta_{i_{k-1},i_k},g_{i_k},\beta_{i_k,i_{k+1}}), \label{loops}
\end{align} where $\beta \in \set{\gamma,\eta}$ and we assume $t$ is in the domain of each $\beta_{i_k,i_{k+1}}$. When $\beta_{i_0,i_1}= \beta_{i_n,i_{n+1}}$, this is a loop that consists of segments of level sets of $g_i$'s. To simplify the notation, we also write the left-hand side of $\eqref{loops}$ as $(t,\bm{\beta},\bm{g})$ where \begin{align*}
    \bm{\beta} &= (\beta_{i_0,i_1},  \beta_{i_1,i_2},\dots,\beta_{i_n,i_{n+1}}),\\
    \bm{g} &= (g_{i_1}, g_{i_2},\dots, g_{i_n}).
\end{align*}

\begin{definition}[Regular loops]\label{reg_loop}
    Given a tuple $(t,\bm{\beta},\bm{g})$ such that $\beta_{i,j}(t)$ is well-defined for each $\beta_{i,j}\in\bm{\beta}$ and the first and last elements of $\bm{\beta}$ coincide (hence it represents a loop), we say $(t,\bm{\beta},\bm{g})$ represents a \textit{regular loop} if all of the following conditions are satisfied:
    \begin{enumerate}
        \item The loop represented by the tuple is a Jordan curve and $\bm{g}$ orders the arcs counter-clockwise, and \label{counter}
        \item Each arc $(t,\beta_{i,j},g_j,\beta_{j,k})$ has positive length ($\beta_{i,j}(t)\neq \beta_{j,k}(t)$), and \label{pos}
        \item On each arc $(t,\beta_{i,j},g_j,\beta_{j,k})$, $\nabla g_j$ points outside the loop, and \label{concave}
        \item For each $\beta_{i,j}\in \bm{\beta}$, the internal angle of the loop at $\beta_{i,j}(t)$ is in $[0,\pi)$. \label{acute}
    \end{enumerate}
\end{definition}

It is easy to see that the arc $(t,\beta_{i,j},g_j,\beta_{j,k})$ is a continuous function over $t$ in the space of compact sets on $\C$ under the Hausdorff topology. As a consequence, if $(t,\bm{\beta},\bm{g})$ is well-defined for $[a,b]$ and $(a,\bm{\beta}, \bm{g})$ represents a loop, $t\mapsto (t,\bm{\beta},\bm{g})$ is a continuous function on $[a,b]$ of loops as well. Lemma~\ref{cuts} implies an equivalent definition of a regular loop:

\begin{corollary}\label{equiv}
    Given a tuple $(t,\bm{\beta},\bm{g})$ that represents a loop and satisfies Conditions~\eqref{counter}, \eqref{pos} and \eqref{concave} in Definition~\ref{reg_loop}, $(t,\bm{\beta},\bm{g})$ satisfies Condition~$\eqref{acute}$ if and only if for each pair of adjacent arcs $(t,\beta_{i,j},g_j,\beta_{j,k})$ and $(t,\beta_{j,k},g_k,\beta_{k,l})$ intersecting at $\beta_{j,k}(t)$, we have \begin{equation} \label{contain}
        (t,\beta_{i,j},g_j,\beta_{j,k})\cup(t,\beta_{j,k},g_k,\beta_{k,l}) \subseteq J^\beta_{j,k}(t).
    \end{equation}
\end{corollary}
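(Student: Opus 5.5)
The argument is local: both conditions concern only the two arcs meeting at a vertex $p=\beta_{j,k}(t)$, so I fix such a vertex and compare the internal angle of the loop at $p$ with the angle that $J^\beta_{j,k}(t)$ makes toward $U^\beta_{j,k}(t)$. The key observation is that both pairs of curves leave $p$ along the same two tangent lines. Near $p$, the level set $g_j^{-1}(-t)$ is an analytic curve through $p$, since the gradient is nonzero by Lemma~\ref{convex}. The arc $(t,\beta_{i,j},g_j,\beta_{j,k})$ leaves $p$ along one of the two half-branches of this curve, and $J^\beta_{j,k}(t)$ also contains one half-branch of $g_j^{-1}(-t)$ at $p$, namely the one running toward $\ell_j$. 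The same holds for $g_k$. So \eqref{contain} holds near $p$ exactly when the loop selects, at $p$, the same half-branches of $g_j^{-1}(-t)$ and $g_k^{-1}(-t)$ as $J^\beta_{j,k}(t)$.

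First I show that \eqref{contain} implies Condition~\eqref{acute}. If both arcs lie in $J^\beta_{j,k}(t)$, the two germs at $p$ are the germs of $J^\beta_{j,k}(t)$. I will use Condition~\eqref{concave} and the description $U^\beta_{j,k}(t)=\{g_j<-t,\ g_k<-t\}$ locally, which holds because $U$ is the component avoiding $g_j^{-1}([-t,0])\cup g_k^{-1}([-t,0])$. Since $\nabla g_j$ and $\nabla g_k$ point out of the loop, the interior of the loop near $p$ lies where $g_j<-t$ and $g_k<-t$, that is, in $U^\beta_{j,k}(t)$. The internal angle at $p$ is therefore the angle $J^\beta_{j,k}(t)$ makes toward $U^\beta_{j,k}(t)$, and Lemma~\ref{cuts} makes it strictly less than $\pi$. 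The case $t=t_{j,k}$ is covered by the cusp or polygon-angle remark in that proof.

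For the converse, I assume the internal angle at $p$ lies in $[0,\pi)$ and \eqref{contain} fails. Then at least one arc, say the $g_j$-arc, leaves $p$ along the half-branch of $g_j^{-1}(-t)$ opposite to the one in $J$. By Condition~\eqref{concave}, the loop interior still lies on the side $g_j<-t$ near $p$. Write $\tau$ for the unit tangent of $g_j^{-1}(-t)$ at $p$ pointing into $J$. The $g_j$-arc then leaves $p$ in direction $-\tau$, while the $g_k$-arc leaves in direction $\pm\sigma$, where $\sigma$ is the analogous tangent for $g_k$. Both $\tau$ and $\sigma$ bound the sector $U$, whose opening is less than $\pi$ by Lemma~\ref{cuts}. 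I will then check the two cases for the $g_k$-germ.
\begin{itemize}
\item If the $g_k$-germ is $\sigma$, the loop's angle at $p$, measured on the side $\{g_j<-t,\ g_k<-t\}$ and spanned by $-\tau$ and $\sigma$, equals $\pi$ minus the opening of $U$. That side is, however, the reflex complement, so its angle is at least $\pi$. This contradicts Condition~\eqref{acute}.
\item If the $g_k$-germ is $-\sigma$, the angle is the vertical angle to $U$'s complement, again at least $\pi$ once the concavity orientation is enforced.
\end{itemize}
The counterclockwise ordering in Condition~\eqref{counter} pins down which side is the interior, so each case reduces to a half-plane argument using the signs of $\nabla g_j\cdot T$ and $\nabla g_k\cdot T$ from the proof of Lemma~\ref{cuts}.

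These local germ statements then globalize. An arc of a level curve is determined by its endpoint germ, and $J^\beta_{j,k}(t)\cap g_j^{-1}(-t)$ is a connected segment from $\ell_j$ to $p$. Since the arc ends at the next vertex on the same branch, the inclusion in \eqref{contain} follows. Here I rely on Corollary~\ref{geo_level}: $g_j^{-1}(-t)$ meets $\beta_{j,k}$ only at $p$, so the arc cannot leave $J$ without crossing $S_{j,k}$.

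I expect the main obstacle to be the converse direction, which requires careful bookkeeping of orientations. Concavity, the counterclockwise ordering and the convexity of the level sets together have to rule out the wrong-branch configurations. I would first do this explicitly in the $(T,N)$ frame of Figure~\ref{fig:coord}.
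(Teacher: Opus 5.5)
Your overall route is the paper's. The paper's proof is exactly this local analysis at $p=\beta_{j,k}(t)$: it uses Conditions~\eqref{counter} and \eqref{concave} to put $\nabla g_j,\nabla g_k$ on the right of the oriented loop, invokes transversality from Corollary~\ref{geo_level}, and reads off from Figure~\ref{fig:jordan} that the only admissible configuration with internal angle below $\pi$ follows $J^\beta_{j,k}(t)$. Your forward direction is fine, and so is your germ-to-arc globalization, which is more than the paper writes out.

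The first case of your converse, however, is argued incorrectly.
\begin{itemize}
\item Normalize $p=0$, $\tau=1$, $\sigma=e^{i\phi}$ with $0<\phi<\pi$. Then near $p$, $U^\beta_{j,k}(t)$ is $\{0<\arg z<\phi\}$, $\{g_j<-t\}$ is the upper half-plane, and $\{g_k<-t\}$ is $\{\phi-\pi<\arg z<\phi\}$.
\item With germs $-\tau$ and $\sigma$, the convex sector $\{\phi<\arg z<\pi\}$ between them has opening $\pi-\phi$. It lies in $\{g_j<-t,\ g_k>-t\}$, not in $\{g_j<-t,\ g_k<-t\}$.
\item This convex sector is exactly the side that your own premise (Condition~\eqref{concave} along the $g_j$-arc) selects as the interior. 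So here the internal angle would be $\pi-\phi<\pi$, and Condition~\eqref{acute} is \emph{not} contradicted.
\item What actually rules the case out is Condition~\eqref{concave} along the $g_k$-arc. Next to the ray $\sigma$, the side where $g_k<-t$ faces $U$, so it lies in the reflex sector. The two arcs demand different interiors, so this mixed configuration is impossible by concavity alone, independently of any angle condition.
\end{itemize}
The mirror case ($\tau$ with $-\sigma$) fails the same way. Only after excluding both mixed cases does the converse reduce to your second case, where the interior has opening $2\pi-\phi>\pi$.

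A cleaner bookkeeping, which is what the paper's figure encodes, runs as follows. Conditions~\eqref{counter} and \eqref{concave} force the loop to arrive at $p$ in direction $i\nabla g_j/|\nabla g_j|$ and to leave in direction $i\nabla g_k/|\nabla g_k|$, viewing gradients as complex numbers. So both germs are determined by the gradients. Set $\theta=\arg(\nabla g_k/\nabla g_j)\in(-\pi,0)\cup(0,\pi)$, using transversality. Then each of the two germs lies in $J^\beta_{j,k}(t)$ if and only if $\sin\theta>0$. The internal angle is $\pi-\theta$, so all three conditions coincide.
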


\begin{proof}
    Without loss of generality, we assume that the loop goes from $\beta_{i,j}(t)$ to $\beta_{j,k}(t)$ then to $\beta_{k,l}(t)$ in a counterclockwise fashion. Thus by Conditions~\eqref{counter} and \eqref{concave}, both $\nabla g_j$ on $(t,\beta_{i,j},g_j,\beta_{j,k})$ and $\nabla g_k$ on $(t,\beta_{j,k},g_k,\beta_{k,l})$ point to the right of the curve. Note that Corollary~\ref{geo_level} implies that under the strict descent condition, for any $t > t_{i,j} = -\sup_{z\in S_{i,j}} g_i(z)$, the level set $g_i^{-1}(-t)$ and $g_j^{-1}(-t)$ intersect at most twice, both in a non-tangential manner. It thus follows that $\eqref{contain}$ holds if and only if the internal angle at $\beta_{j,k}(t)$ is strictly smaller than $\pi$ (see Figure~\ref{fig:jordan}).

        \begin{figure}[h!]
        \centering
        \begin{tikzpicture}[line width=1.5]
  \def\gridW{7cm} \def\gridH{5cm}
      \colorlet{mygreen}{green!70!black}

  \begin{scope}[shift={(0,0)}]
    \fill[magenta,opacity=0.5]  (-0.55,1.04) -- (-1,1.7) arc(120:215:0.7) --cycle;
    \draw[red] (-2,0) -- (2,0) node[right, red] {$\ell_{j}$};
    \draw[red] (-2, 3) -- (2, 1) node[right, red] {$\ell_{k}$};

    \draw[mygreen, thick, ->, line width=1.5] plot[smooth,tension=1] coordinates{(1.5, 1.25)  (0,0.6) (-1.5,2.75)};
    \draw[mygreen, thick, ->, line width=1.5] plot[smooth,tension=1] coordinates{(-1.7, 0) (0,1.2) (1.7,0)};

    \draw[blue, line width=4] plot[smooth, tension=1] coordinates {(-1.2,0.6) (-0.8,0.95) (-0.55,1.04)};

    \draw[blue, -latex, line width=4] plot[smooth, tension=1] coordinates {(-0.55,1.04) (-0.8,1.4) (-1,1.7)};

    \node[mygreen, above right] at (-1.7,2.7) {\large $g_{k}^{-1}(-t)$};
    \node[mygreen, below] at (-2,0) {\large $g_{j}^{-1}(-t)$};
    \fill[orange] (-0.55,1.04) circle (4pt); 
  \end{scope}

  \begin{scope}[shift={(\gridW,0)}]
    \fill[magenta,opacity=0.5]  (-0.37,0.95)  -- (-1.25,1.35) arc(160:215:0.85) --cycle;
    \draw[red] (-2,0) -- (2,0) node[right, red] {$\ell_{j}$};
    \draw[red] (-2, 1.5) -- (2, 2.5) node[right, red] {$\ell_{k}$};

    \draw[mygreen, thick, ->, line width=1.5] plot[smooth,tension=1] coordinates{(1.5, 2.35)  (0,1) (-1.5,1.65)};
    \draw[mygreen, thick, ->, line width=1.5] plot[smooth,tension=1] coordinates{(-1.75, 0) (0,1) (1.75,0)};

    \draw[blue, line width=4] plot[smooth, tension=1] coordinates {(-1.2,0.5) (-0.8,0.85) (-0.37,0.95)};

    \draw[blue, -latex, line width=4] (-0.37,0.95)  .. controls (-1, 1.1) .. (-1.3,1.4);

    \node[mygreen, above right] at (-2,1.7) {\large $g_{k}^{-1}(-t)$};
    \node[mygreen, below] at (-2,0) {\large $g_{j}^{-1}(-t)$};
    \fill[orange] (-0.37,0.95) circle (4pt); 
  \end{scope}

  \begin{scope}[shift={(0, -\gridH)}]
        \draw[red] (-2,0) -- (2,0) node[right, red] {$\ell_{j}$};
    \draw[red] (-2, 0) -- (1.7, 2.5) node[right, red] {$\ell_{k}$};

    \draw[mygreen, thick, ->, line width=1.5] plot[smooth,tension=1] coordinates{(1.2, 2.15)  (0.6,1) (-0.6,0)};
    \draw[mygreen, thick, ->, line width=1.5] plot[smooth,tension=1] coordinates{(-1, 0.65) (0.6,1) (1.75,0)};

    \node[mygreen, above right] at (-2.2,0.7) {\large $g_{j}^{-1}(-t)$};
    \node[mygreen, below] at (-0.8,0) {\large $g_{k}^{-1}(-t)$};
    \node[blue, right] at (1,1.4) {\large \textbf{No Solution}};
    \fill[orange] (0.57,0.97) circle (4pt);
  \end{scope}

  \begin{scope}[shift={(\gridW, -\gridH)}]
      \fill[magenta,opacity=0.5]  (-0.67,0.67) -- (-1.2,1.3) arc(160:200:1.5) --cycle;
    \draw[red] (2,0) -- (-2,0) node[left, red] {$\ell_{j}$};
    \draw[red] (2, 0) -- (-1.7, 2.5) node[left, red] {$\ell_{k}$};

    \draw[mygreen, thick, ->, line width=1.5] plot[smooth,tension=1] coordinates{(0.2, 0)  (-1,1) (-1.3,2.2)};
    \draw[mygreen, thick, ->, line width=1.5] plot[smooth,tension=1] coordinates{(-1.4, 0) (-0.3,0.8) (1.2,0.5)};

    \draw[blue, line width=4] plot[smooth, tension=1] coordinates {(-1.2,0.2) (-1,0.5) (-0.67,0.7)};

    \draw[blue, -latex, line width=4] (-0.67,0.67)  .. controls (-1, 1) .. (-1.27,1.45);

    \node[mygreen, above right] at (-1.6,2.2) {\large $g_{j}^{-1}(-t)$};
    \node[mygreen, below] at (-1.5,0) {\large $g_{k}^{-1}(-t)$};
    \fill[orange] (-0.67,0.67) circle (4pt);
  \end{scope}

\end{tikzpicture}
        \caption{The unique solution for a curve going from $g_{j}^{-1}(-t)$ to $g_{k}^{-1}(-t)$ at $\beta_{j,k}(t)$ (highlighted in orange), assuming $\nabla g_j$ and $\nabla g_k$ point to the right of the curve and that the left turn angle is less than $\pi$, is $J^\beta_{j,k}(t)$.}
        \label{fig:jordan}
        \end{figure}
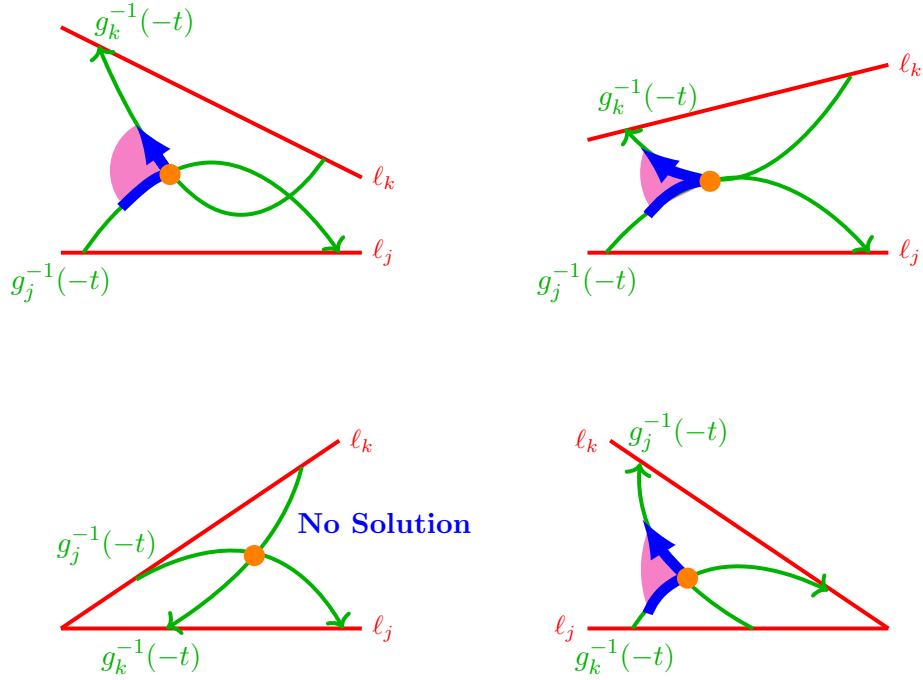
\end{proof}

\begin{lemma}\label{inward}
    Under the strict descent condition, given a regular loop $(t_0,\bm{\beta},\bm{g})$, there exists $\varepsilon>0$ such that for all $t\in (t_0,t_0+\varepsilon)$, the curve $(t,\bm{\beta}, \bm{g})$ is a regular loop inside $(t_0,\bm{\beta},\bm{g})$.
\end{lemma}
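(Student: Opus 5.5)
The plan is to verify the four conditions of Definition~\ref{reg_loop} for $t$ slightly larger than $t_0$ by a continuity argument. Along the way we will show that the loop moves strictly inward, because each arc moves in the direction of $-\nabla g_j$.

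First, well-definedness and condition~\eqref{pos}. Each $\beta_{i,j}\in\bm\beta$ is defined at $t_0$. If $t_0$ were the right endpoint of the domain of some $\eta_{i,j}$, then $\eta_{i,j}(t)$ would leave $W_{i,j}$ and could not be a vertex inside $\Omega$. Since the domains are half-open intervals containing $t_{i,j}$, every $\beta_{i,j}(t)$ is therefore defined on some $[t_0,t_0+\varepsilon)$ and depends continuously on $t$. Consecutive vertices $\beta_{i,j}(t_0)\neq\beta_{j,k}(t_0)$ are distinct, so they stay distinct for small $\varepsilon$, which gives condition~\eqref{pos}.

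Next, the arcs. By Proposition~\ref{sd_con}, on the arc $(t,\beta_{i,j},g_j,\beta_{j,k})$ the function $g_j$ is harmonic with $\nabla g_j\neq0$; otherwise the level curve would have a singular point, which is excluded by Lemma~\ref{convex} on the reflected level sets. The level curves $g_j^{-1}(-t)$ therefore form a smooth foliation near the compact arc. As $t$ increases, these level curves move in the direction $-\nabla g_j$, which points into the loop by condition~\eqref{concave} at $t_0$. Hence the arcs at time $t$ lie strictly inside the region bounded by $(t_0,\bm\beta,\bm g)$ away from the vertices, and by continuity of the gradients $\nabla g_j$ still points outward, giving condition~\eqref{concave}. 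Near a vertex $\beta_{j,k}(t_0)$, the curve $\beta_{j,k}$ is a descent curve: Lemma~\ref{cuts} together with Corollary~\ref{equiv} shows the vertex moves into $U^\beta_{j,k}(t_0)$, i.e.\ into the interior side of the corner $J^\beta_{j,k}(t_0)$. Combining the two local pictures, the whole loop moves inside the old one.

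Jordan property and orientation, condition~\eqref{counter}. The new loop is a small $C^0$ perturbation of the old Jordan curve. The arcs are graphs over the old arcs in the normal foliation coordinates, and each corner of angle in $[0,\pi)$ stays a simple corner. The main obstacle is the case of internal angle $0$, i.e.\ a cusp at a point $m_{j,k}$ with $t_0=t_{j,k}$. There I would use Corollary~\ref{geo_level}\,(3),(4): for $t>t_{j,k}$ the two level sets separate at $\gamma_{j,k}(t)$ (or $\eta_{j,k}(t)$) transversally, and $J^\beta_{j,k}(t)$ converges in the Hausdorff sense to the cusp. No self-intersection is created nearby, since the level sets $g_j^{-1}(-t)$ and $g_k^{-1}(-t)$ meet at most twice, transversally. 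Away from the vertices, disjoint compact arcs remain disjoint for small $\varepsilon$, so the curve stays Jordan, with orientation preserved by continuity.

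Finally, condition~\eqref{acute}. Use Corollary~\ref{equiv}: it suffices to check the containment \eqref{contain} at each vertex for $t$ near $t_0$. At time $t$, the two adjacent arcs emanate from $\beta_{j,k}(t)$ along $g_j^{-1}(-t)$ and $g_k^{-1}(-t)$. By Corollary~\ref{geo_level} there are only two branches from $\beta_{j,k}(t)$ on each level set, so each arc lies on one of them. By continuity from $t_0$, where \eqref{contain} holds, each arc lies on the branch belonging to $J^\beta_{j,k}(t)$. The internal angle is then $<\pi$ by Lemma~\ref{cuts}. Shrinking $\varepsilon$ finitely many times, once per vertex and arc, gives the result.
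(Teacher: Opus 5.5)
Your proposal is correct and follows essentially the same route as the paper: Conditions~\eqref{counter}--\eqref{concave} hold by continuity for $t$ near $t_0$, Condition~\eqref{acute} comes from the containment \eqref{contain} in $J^\beta_{j,k}(t)$ together with Corollary~\ref{equiv}, and inwardness comes from the vertex $\beta_{j,k}(t)$ entering $U^\beta_{j,k}(t_0)$, which is the interior side of the old corner. The one step you leave implicit in ``combining the two local pictures'' is that, near a vertex, the new $g_j$-arc cannot cross the old $g_k$-arc; the paper checks this explicitly, and it follows from your last paragraph, since the containment at time $t$ places the new $g_j$-arc and the old $g_k$-arc in different components of $W_{j,k}\backslash S_{j,k}$.
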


\begin{proof}
    Suppose $(t_0,\bm{\beta},\bm{g})$ is a regular loop. Then Condition~$\eqref{acute}$ would imply in particular that $J^{\beta}_{i,j}(t_0)$ is a well-defined curve with positive length for all $\beta_{i,j}\in \bm{\beta}$. Therefore, $\beta_{i,j}$ should be well defined for all $(t_0,t_0+\varepsilon)$ and all $\beta_{i,j}\in \bm{\beta}$ given $\varepsilon>0$ sufficiently small. Suppose $\varepsilon>0$ is small enough that for all $\beta_{i,j}\in \bm{\beta}$, $\beta_{i,j}(t)$ is well-defined for $t\in (t_0,t_0+\varepsilon)$. On $[t_0,t_0+\varepsilon]$, the loop $(t,\bm{\beta},\bm{g})$ evolves continuously. Thus we can choose a smaller $\varepsilon$ such that on $(t_0,t_0+\varepsilon)$, each arc of $(t,\bm{\beta},\bm{g})$ has positive length and non-adjacent arcs remain positive distance away from each other. In consequence, $(t,\bm{\beta},\bm{g})$ is still a Jordan curve with the same counter-clockwise orientation. Note that on each $(t,\beta_{i,j},g_j,\beta_{j,k})$, $\nabla g_j$ never vanishes and always points to one side of the curve by convexity. Thus Condition~$\eqref{concave}$ should also hold for all $t \in (t_0,t_0+\varepsilon)$. In summary, given $\varepsilon>0$ sufficiently small, Conditions~$\eqref{counter}$, $\eqref{pos}$ and $\eqref{concave}$ hold for all $(t,\bm{\beta}, \bm{g})$ with $t\in (t_0,t_0+\varepsilon)$.

    Consider a pair of adjacent arcs $(t_0,\beta_{i,j},g_j,\beta_{j,k})$ and $(t_0, \beta_{j,k}, g_k,\beta_{k,l})$. Note that the strict descent condition implies that $\beta_{i,j}((t_0,t_0+\varepsilon])$ does not intersect $g_j^{-1}(-t_0)$. The fact that $(t, \beta_{i,j},g_j,\beta_{j,k})$ is non-degenerate for $t\in (t_0,t_0+\varepsilon)$ implies that $\beta_{i,j}([t_0,t_0+\varepsilon])$ does not intersect the entire image of $\beta_{j,k}$. Thus it follows that $\beta_{i,j}([t_0,t_0+\varepsilon])$ is entirely contained in the component of $U^\beta_{j,k}(t_0) \backslash S_{j,k}$ that touches $\ell_j$. Likewise, $\beta_{k,l}([t_0,t_0+\varepsilon])$ is entirely contained in the component of $U^\beta_{j,k}(t_0) \backslash S_{j,k}$ that touches $\ell_k$. Therefore Corollary~\ref{geo_level} would imply for each $t\in [t_0,t_0+\varepsilon]$, both $(t,\beta_{i,j},g_j,\beta_{j,k})$ and $(t, \beta_{j,k}, g_k,\beta_{k,l})$ are in $J^\beta_{j,k}(t)$. Thus Condition~$\eqref{acute}$ holds for all $t\in[t_0,t_0+\varepsilon]$ by Corollary~\ref{equiv}.

    Now we claim that $(t,\bm{\beta},\bm{g})$ is disjoint from $(t_0,\bm{\beta},\bm{g})$ for all $t\in (t_0,t_0+\varepsilon)$ with sufficiently small $\varepsilon>0$. It suffices to show that for each $(t_0,\beta_{i,j},g_j,\beta_{j,k}) \subseteq (t_0,\bm{\beta},\bm{g})$, $(t,\beta_{i,j},g_j,\beta_{j,k})$ and  $(t_0,\bm{\beta},\bm{g})$ are disjoint for all $t\in (t_0,t_0+\varepsilon)$ with sufficiently small $\varepsilon>0$. Note that if $(t_0,\beta_{i,j},g_j,\beta_{j,k})$ and $(t_0,\beta_{i',j'},g_{j'},\beta_{j',k'})$ are disjoint, then $(t,\beta_{i,j},g_j,\beta_{j,k})$ and $(t_0,\beta_{i',j'},g_{j'},\beta_{j',k'})$ are also disjoint given $t-t_0$ sufficiently small. Thus we only need to consider the two arcs that share a vertex with $(t_0, \beta_{i,j},g_j,\beta_{j,k})$. Suppose $(t_0,\beta_{j,k},g_k,\beta_{k,l})$ is the one that shares vertex $\beta_{j,k}(t)$ with it. Thus by Condition~$\eqref{acute}$, $(t, \beta_{i,j},g_j,\beta_{j,k})$ and  $(t_0, \beta_{j,k},g_k,\beta_{k,l})$ lie in the different components of $W_{j,k}\backslash S_{j,k}$, implying that they are disjoint. Therefore, given $\varepsilon>0$ sufficiently small, for all $t\in (t_0,t_0+\varepsilon)$, $(t,\beta_{i,j},g_j,\beta_{j,k})$ is disjoint from all the arcs in $(t_0,\bm{\beta},\bm{g})$, proving the claim. Since the angle $J^\beta_{i,j}(t_0)$ makes toward $\beta_{i,j}([t_0,t_0+\varepsilon])$ is the internal angle of $(t_0,\bm{\beta},\bm{g})$, it follows that $\beta_{i,j}(t)$ is inside $(t_0,\bm{\beta},\bm{g})$ for $t\in (t_0,t_0+\varepsilon)$. Thus $(t,\bm{\beta},\bm{g})$ is inside $(t_0,\bm{\beta},\bm{g})$, proving the lemma.
\end{proof}

\subsection{Critical loops}
Given a regular loop $(t_0,\bm{\beta}, \bm{g})$, it is clear that $\beta_{i,j}(t)$ is well-defined on some half-closed interval $(t_0,t_0+\varepsilon]$ for all $\beta_{i,j}\in\bm{\beta}$. Thus $(t,\bm{\beta},\bm{g})$ is a well-defined (not necessarily simple) loop on such a interval. We set \begin{equation*}
    t_c = \inf\set{t\in (t_0,\infty)| \text{$(t,\bm{\beta},\bm{g})$ is not well-defined or it is not a regular loop}}.
\end{equation*}
By Lemma~\ref{inward}, $t_c$ is strictly greater than $t_0$. The finiteness of $t_c$ and well-defined-ness of $(t_c,\bm{\beta},\bm{g})$ are guaranteed by the following:

\begin{lemma}
    Assuming the condition of strict descent and that $(t_0,\bm{\beta},\bm{g})$ is a regular loop, we have $t_c < \infty$ and $(t_c,\bm{\beta},\bm{g})$ is well-defined. Additionally, $t\mapsto (t,\bm{\beta},\bm{g})$ is a continuously shrinking family of Jordan curves on $[t_0,t_c)$ and either of the following conditions needs to hold for the loop $(t_c,\bm{\beta},\bm{g})$:
\begin{enumerate}[(a)]
    \item Some of $(t_c,\beta_{i,j},g_j,\beta_{j,k})$'s are degenerate to points, or \label{lem_con1}
    \item $(t_c,\bm{\beta},\bm{g})$ is not a Jordan curve. \label{lem_con2}
\end{enumerate}
\end{lemma}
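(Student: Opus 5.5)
We argue in four steps: finiteness of $t_c$, well-definedness at $t_c$, shrinking on $[t_0,t_c)$, and the dichotomy at $t_c$.

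\emph{Finiteness of $t_c$.} Suppose $t_c=\infty$. Then for every $t>t_0$ the loop $(t,\bm{\beta},\bm{g})$ is a regular Jordan curve. By Lemma~\ref{inward} applied at each time, the family is nested: for $t<t'$ the curve at time $t'$ lies inside the curve at time $t$. Hence every loop stays inside the compact region bounded by $(t_0,\bm{\beta},\bm{g})$, which lies in $\overline\Omega$. On the other hand, each vertex $\beta_{i,j}(t)$ satisfies $g_i(\beta_{i,j}(t))=-t$. Since $g_i$ is continuous, hence bounded, on the compact set $\overline\Omega$, this forces $t\le \max_{i}\sup_{\overline\Omega}(-g_i)<\infty$, a contradiction. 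So $t_c<\infty$.

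\emph{Well-definedness at $t_c$.} We must check that $t_c$ lies in the domain of every $\beta_{i,j}\in\bm{\beta}$. For $\gamma_{i,j}$ the domain is $[t_{i,j},\infty)$, so there is nothing to prove. For $\eta_{i,j}$ the domain is half-open. If it ended exactly at $t_c$, the points $\eta_{i,j}(t)$ would have to leave every compact subset of the interior of $W_{i,j}$ as $t\uparrow t_c$: they would either escape to infinity or hit $\partial W_{i,j}$, which by Lemma~\ref{no_branch} can only happen at $z_{i,j}$ or at infinity. But all $\eta_{i,j}(t)$ with $t<t_c$ lie inside the compact curve $(t_0,\bm{\beta},\bm{g})\subseteq\Omega$, which is a compact subset of the interior of $W_{i,j}$ away from $z_{i,j}$. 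Hence $\eta_{i,j}$ extends analytically slightly past $t_c$. The same argument gives well-definedness on a neighbourhood of $t_c$.

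\emph{Shrinking on $[t_0,t_c)$.} Continuity of $t\mapsto(t,\bm{\beta},\bm{g})$ in the Hausdorff topology was already noted, since each arc varies continuously. Monotone shrinking follows by applying Lemma~\ref{inward} at each $t\in[t_0,t_c)$ and using connectedness of $[t_0,t_c)$ to pass from local to global nesting.

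\emph{The dichotomy at $t_c$.} Suppose neither (a) nor (b) holds, so $(t_c,\bm{\beta},\bm{g})$ is a Jordan curve with all arcs of positive length, i.e.\ Conditions~\eqref{counter} and \eqref{pos} hold. We aim to show it is regular, so that Lemma~\ref{inward} contradicts the definition of $t_c$ as an infimum.
\begin{enumerate}
\item \emph{Orientation and Condition~\eqref{concave}.} These pass to the limit. The gradients $\nabla g_j$ are nonvanishing on the arcs by convexity (Lemma~\ref{convex}), and a continuous nonvanishing normal field cannot switch sides of the curve in the limit.
\item \emph{Condition~\eqref{acute}.} This is the main obstacle, since an internal angle could a priori tend to $\pi$ in the limit. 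We use Corollary~\ref{equiv}: for $t<t_c$, each adjacent pair of arcs lies in $J^\beta_{j,k}(t)$. Both $J^\beta_{j,k}(t)$ and the arcs vary continuously in the Hausdorff topology, so the closed containment \eqref{contain} persists at $t=t_c$.
\item \emph{Strictness of the angle.} The angle $J^\beta_{j,k}(t_c)$ makes toward $U^\beta_{j,k}(t_c)$ is strictly less than $\pi$ by Lemma~\ref{cuts}, which uses the strict descent condition. Corollary~\ref{equiv} then returns Condition~\eqref{acute} at $t_c$.
\end{enumerate}
Thus the loop at $t_c$ is regular, and Lemma~\ref{inward} gives $\varepsilon>0$ such that $(t,\bm{\beta},\bm{g})$ is regular on $(t_c,t_c+\varepsilon)$. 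Together with regularity on $(t_0,t_c]$, this contradicts the definition of $t_c$, so (a) or (b) must hold.
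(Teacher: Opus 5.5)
Your proposal is correct and follows essentially the paper's argument. The shared steps are: nesting via Lemma~\ref{inward} with a local-to-global step; boundedness of the $g_i$ on the initial closed region to get $t_c<\infty$; and, if the loop at $t_c$ were a Jordan curve with non-degenerate arcs, passing orientation and Condition~\eqref{concave} to the limit, recovering Condition~\eqref{acute}, and contradicting the definition of $t_c$ via Lemma~\ref{inward}. There are two differences. You explicitly justify that $t_c$ lies in the domain of each $\eta_{i,j}$, which the paper leaves implicit. You also obtain \eqref{contain} at $t_c$ as a Hausdorff limit rather than by re-running the argument of Lemma~\ref{inward}.

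One correction in your well-definedness step: $P(t_0,\bm{\beta},\bm{g})$ need not lie in the interior of $W_{i,j}$, since for the loop $\partial\Omega$ it contains $L_i\cup L_j\subseteq\partial W_{i,j}$. Your conclusion still holds, because $S_{i,j}$ meets $\partial W_{i,j}$ only at $z_{i,j}$, which lies outside $\overline{\Omega}$.
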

\begin{proof}
    Note that for all $t \in [t_0,t_c)$, any non adjacent arcs of $(t,\bm{\beta},\bm{g})$ are disjoint and all arcs have strictly positive length. The Jordan curve theorem implies that such a loop $(t,\bm{\beta},\bm{g})$ is in fact a Jordan curve. According to Lemma~\ref{inward}, the function \begin{equation*}
        t \mapsto d((t_0,\bm{\beta}, \bm{g}), (t,\bm{\beta}, \bm{g}))
    \end{equation*} is a locally increasing function as $(t,\bm{\beta}, \bm{g})$ shrinks locally over $t$. It follows from continuity that this function increases globally and therefore $(t_1,\bm{\beta}, \bm{g})$ is always inside $(t_2,\bm{\beta}, \bm{g})$ for $t_1>t_2$ in $[t_0,t_c)$. Since inside $(t_0,\bm{\beta},\bm{g})$, each $g_i$ is lower-bounded, we cannot have $\beta_{i,j}(t)$ inside  $(t_0,\bm{\beta},\bm{g})$ for arbitrarily large $t$. This implies that $t_c<\infty$.
    
    Now suppose that Condition~$\eqref{pos}$ holds for $(t_c,\bm{\beta},\bm{g})$. Again on each $(t,\beta_{i,j},g_j,\beta_{j,k})$, $\nabla g_j$ never vanishes and always points to one side of the curve. Therefore, Condition~$\eqref{concave}$ should also hold for all $t \in [t_0,t_c]$ by the continuity of $\nabla g_i$'s. Moreover, using the same argument as in Lemma~\ref{inward}, we have that Condition~$\eqref{pos}$ holds for all $t\in [t_0,t_c]$ implies that Condition~$\eqref{acute}$ also holds for all $t\in[t_0,t_c]$. Therefore, if $(t_c,\bm{\beta},\bm{g})$ is a Jordan curve, $\bm{g}$ would induce a counterclockwise parametrization and it would thus imply that it is a regular loop. This would contradiction the definition of $t_c$.
    
    In conclusion, Conditions~$\eqref{concave}$ and $\eqref{acute}$ will not fall before non-adjacent arcs of $(t,\bm{\beta},\bm{g})$ touch or some of the arcs become degenerate.
\end{proof}

Given a regular loop $(t_0,\bm{\beta},\bm{g})$, we call the loop $(t_c,\bm{\beta},\bm{g})$ a \textit{critical loop} originated from $(t_0,\bm{\beta},\bm{g})$. The important fact the enables us to explicitly build the electrostatic skeleton is as follows:

\begin{proposition}\label{breakdown}
    Under the strict descent condition, given a regular loop $(t_0,\bm{\beta},\bm{g})$, the critical loop $(t_c,\bm{\beta},\bm{g})$ is either a point or a union of regular loops intersecting each other on their vertices. 
\end{proposition}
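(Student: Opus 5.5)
We analyze the limit curve $(t_c,\bm{\beta},\bm{g})$ using the dichotomy of the preceding lemma: at $t_c$ either some arcs degenerate to points (case \eqref{lem_con1}), or non-adjacent arcs touch so the loop stops being Jordan (case \eqref{lem_con2}). In both cases we cut the limit curve at its ``pinch points'' into simple closed subcurves. Each subcurve is again written as a tuple $(t_c,\bm{\beta}',\bm{g}')$, and we check that it satisfies Conditions~\eqref{counter}--\eqref{acute} of Definition~\ref{reg_loop}. The key input is that \eqref{concave} and \eqref{acute} are closed conditions that survive to $t_c$, by the argument in the previous lemma and by Corollary~\ref{equiv}. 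The strict descent condition is then used to show that the new vertices created at $t_c$ also have angle $<\pi$.

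\emph{Case (a).} Suppose an arc $(t,\beta_{i,j},g_j,\beta_{j,k})$ shrinks to a point as $t\uparrow t_c$. Then $\beta_{i,j}(t_c)=\beta_{j,k}(t_c)=:w$, so $g_i(w)=g_j(w)=g_k(w)=-t_c$ and hence $w\in S_{i,k}$. We drop $g_j$ from $\bm g$ and replace the two vertex curves by a branch $\beta'_{i,k}\in\{\gamma_{i,k},\eta_{i,k}\}$ passing through $w$ at time $t_c$. We must check that $t_c$ lies in the domain of that branch, i.e.\ $t_c\ge t_{i,k}$, which holds because $g_i(w)=-t_c\le -t_{i,k}$ by Lemma~\ref{unique_max}. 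We then verify that the internal angle at $w$, now formed by $g_i^{-1}(-t_c)$ and $g_k^{-1}(-t_c)$, is $<\pi$. For this we compare the limits of $J^\beta_{i,j}(t)$ and $J^\beta_{j,k}(t)$ and use Lemma~\ref{cuts} for the pair $(i,k)$ together with Corollary~\ref{equiv}. If several consecutive arcs collapse simultaneously, we iterate this. If every arc collapses, the loop is a single point.

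\emph{Case (b).} Suppose two non-adjacent arcs, on level sets of $g_j$ and $g_m$, touch at a point $w$. Since the loop shrank from the inside and both gradients point outward by \eqref{concave}, the arcs touch from the exterior sides of a thin region. Hence $\nabla g_j(w)$ and $\nabla g_m(w)$ are antiparallel if the contact is tangential, or the contact is transversal otherwise. The contact point satisfies $g_j(w)=g_m(w)$, so $w\in S_{j,m}$. Tangential contact is exactly the situation of Lemma~\ref{unique_max}: then $w=m_{j,m}$ and $t_c=t_{j,m}$, and the splitting produces two loops with a cusp (angle $0$, which is allowed in \eqref{acute}) at $w$, meeting $\gamma_{j,m}$ and $\eta_{j,m}$. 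If the contact were transversal at a point of $S_{j,m}$ other than $m_{j,m}$, the arcs would already have crossed at slightly earlier $t$, contradicting the Jordan property on $[t_0,t_c)$. Contact at a vertex of the loop reduces to case (a), or to a combination of (a) and (b). Splitting the loop at all contact points then yields finitely many Jordan subloops, counterclockwise ordered, which share only the contact vertices.

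The main obstacle is the angle control at the newly created vertices. Specifically, one must show that after the merge in (a) the internal angle at $w$ stays below $\pi$, rather than just limiting to $\pi$, and rule out degenerate higher-order contacts in (b). I would attack this first by writing the local picture in the $(T,N)$ frame of Lemma~\ref{cuts}. There strict descent forbids parallel gradients except at $m_{j,m}$, so the angle at $w$ is determined by a non-degenerate configuration of three transversal level curves, and convexity from Lemma~\ref{convex} closes the argument.
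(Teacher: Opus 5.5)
Your overall architecture matches the paper's: first remove the degenerate arcs as in Lemma~\ref{nondeg}, then analyse the pinch points. For a contact between interior points of two arcs your argument is correct, and sharper than the paper's. A transversal contact would persist to $t<t_c$, and a tangential one forces antiparallel gradients, hence $w=m_{j,m}$ and $t_c=t_{j,m}$, with $\gamma_{j,m}$ and $\eta_{j,m}$ as the new vertex curves.

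\textbf{First gap: the angle bound in case (a).} The strict bound at a merged vertex is the whole content of Lemma~\ref{nondeg}, and your proposal leaves it open. The route you indicate does not supply it. Corollary~\ref{equiv} only converts the bound into the statement that the two arcs at $w$ lie in $J^{\beta'}_{i,k}(t_c)$, and you give no independent reason for that containment. Also, the ``three transversal level curves'' picture fails when $w=m_{i,k}$: there $g_i^{-1}(-t_c)$ and $g_k^{-1}(-t_c)$ are tangent, producing a cusp, which Condition~\eqref{acute} allows.

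The missing idea is to apply Lemma~\ref{cuts} to the \emph{old} vertices at time $t_c$ itself.
\begin{itemize}
\item For $t<t_c$, the interior angle $\alpha(t)$ at a merging vertex $\beta_{j,k}(t)$ is the angle of $J^\beta_{j,k}(t)$ toward $U^\beta_{j,k}(t)$, by Corollary~\ref{equiv}.
\item Since $\beta_{j,k}(t_c)$ is defined, $\alpha(t)$ tends to the corresponding angle at $t_c$, which Lemma~\ref{cuts} puts strictly below $\pi$.
\item The collapsing arcs have bounded curvature and vanishing length, so their turning tends to $0$. Hence the left turn at $w$ equals $\sum(\pi-\alpha(t_c))>0$.
\item Simplicity of the approximating loops caps this turn at $\pi$, so the merged angle lies in $[0,\pi)$.
\end{itemize}
This is exactly what the paper does with the path $\mathcal{L}_t$ and the uniform constant $\varepsilon_0$.

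\textbf{Second gap: contacts involving a vertex.} A vertex landing on the interior of a non-adjacent arc, or two non-adjacent vertices meeting, involves no degenerate arc, so it does not ``reduce to case (a)''. Your persistence argument does not exclude it either. Only one half of each level curve through a vertex belongs to the loop, so a transversal crossing of the full level curves need not give crossing loop arcs at earlier times. Contacts of three or more passes through one point are also not ruled out.

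The paper settles all of this by counting exterior angles.
\begin{itemize}
\item Via the Carath\'eodory kernel theorem, each pass of the curve through a pinch point bounds its own exterior sector.
\item That sector's angle is a limit of exterior angles of $(t,\bm{\beta},\bm{g})$, hence is $\ge\pi$. Equality holds only at interior points of arcs, since vertices have interior angle $<\pi$ by the first step.
\item These sectors fit into $2\pi$, so there are exactly two passes, both through interior points of arcs, and each subloop has a cusp there.
\end{itemize}
The boundary identification induced by the Carath\'eodory map on the unit circle is non-crossing. This is also what justifies your unproved claim that cutting at the contact points yields finitely many counterclockwise Jordan subloops satisfying Condition~\eqref{concave}. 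Only after this step does your tangential-contact analysis close the argument.
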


To prove this proposition, we will need the following lemma to deal with the degenerate arcs in $(t_c,\bm{\beta},\bm{g})$.

\begin{lemma}\label{nondeg}
    Under the strict descent condition, if we delete the degenerate arcs in $(t_c,\bm{\beta},\bm{g})$ to form a new tuple $(t_c,\bm{\beta}_c, \bm{g}_c)$ (which represents the same loop), then  $(t_c,\bm{\beta}_c, \bm{g}_c)$ satisfies Condition~$\eqref{acute}$ in Definition~\ref{reg_loop}.
\end{lemma}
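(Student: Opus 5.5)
The plan is to work locally at each vertex created by deleting degenerate arcs. At vertices untouched by the deletion, Condition~\eqref{acute} should follow by continuity, exactly as in the proof of the preceding lemma. So suppose a maximal run of consecutive arcs
\begin{equation*}
(t_c,\beta_{i,j_1},g_{j_1},\beta_{j_1,j_2}),\ \dots,\ (t_c,\beta_{j_{m-1},j_m},g_{j_m},\beta_{j_m,k})
\end{equation*}
degenerates to a single point $w$, while the arcs carried by $g_i$ before the run and $g_k$ after it keep positive length. All the endpoints $\beta_{i,j_1}(t_c),\dots,\beta_{j_m,k}(t_c)$ coincide with $w$, so $g_i(w)=g_{j_1}(w)=\dots=g_k(w)=-t_c$. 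In particular $w\in S_{i,k}$.

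By Proposition~\ref{sd_con}, $w$ is then either $m_{i,k}$ or $\beta_{i,k}(t_c)$ for exactly one branch $\beta\in\{\gamma,\eta\}$. The first step is to record this branch in $\bm{\beta}_c$, so that the new tuple is well defined and the arcs of $g_i$ and $g_k$ meet at $\beta_{i,k}(t_c)$.

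The core step is an angle comparison near $w$. For $t<t_c$ the loop $(t,\bm{\beta},\bm{g})$ is regular, so on each arc the gradient points outward. Hence near the loop, the enclosed region $D(t)$ lies in the sublevel sets $\{g_{j}<-t\}$ of every function whose arc bounds it nearby. Since the loops shrink continuously (preceding lemma), $D(t_c)$ is, near $w$, contained in the Hausdorff limit of the $D(t)$. I would then show that near $w$
\begin{equation*}
D(t_c)\subseteq\{g_i\le -t_c\}\cap\{g_k\le -t_c\}.
\end{equation*}
The two bounding arcs of $D(t_c)$ at $w$ are pieces of $g_i^{-1}(-t_c)$ and $g_k^{-1}(-t_c)$, and the other functions $g_{j_\ell}$ only cut the region down further. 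So the internal angle of $(t_c,\bm{\beta}_c,\bm{g}_c)$ at $w$ is at most the opening angle of this intersection of two sublevel sets at $w$.

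It remains to bound that opening angle, which is where strict descent enters. If $\nabla g_i(w)$ and $\nabla g_k(w)$ are not parallel, the boundaries $g_i^{-1}(-t_c)$ and $g_k^{-1}(-t_c)$ cross transversally. The intersection of the two smooth "half-planes" then has a corner of angle strictly less than $\pi$ at $w$, which is Lemma~\ref{cuts} applied to $J^\beta_{i,k}(t_c)$, or equivalently Corollary~\ref{equiv}. If the gradients are parallel, Definition~\ref{con_sd1} forces them to be antiparallel, so $w=m_{i,k}$ and the two level curves are tangent with opposite normals. The region is then a cusp, giving angle $0\in[0,\pi)$.

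I expect the main obstacle to be the containment of $D(t_c)$ in the intersection of the two sublevel sets. This requires knowing that near $w$ the loop is genuinely bounded only by the $g_i$- and $g_k$-arcs. When $m\ge 2$, or when a non-adjacent arc also touches $w$ (the non-Jordan case \eqref{lem_con2}), extra pieces of the loop could enter the neighborhood of $w$. The strategy there is to use Corollary~\ref{geo_level}: each $g_{j_\ell}^{-1}(-t)$ meets $S_{i,j_\ell}$ and $S_{j_\ell,k}$ at most twice and transversally. This should show that, as $t\uparrow t_c$, the short arcs of $g_{j_\ell}$ are squeezed inside $J^\beta_{i,k}(t)$. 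The limiting corner at $w$ is then bounded by $J^\beta_{i,k}(t_c)$, and the inclusion~\eqref{contain} of Corollary~\ref{equiv} follows.
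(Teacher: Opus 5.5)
\textbf{Gap: the containment is the whole lemma, and it is not proved.} Your reduction is correct as far as it goes. If you knew that the corner of $(t_c,\bm{\beta}_c,\bm{g}_c)$ at $w$ between the $g_i$-arc and the $g_k$-arc is the sector $\{g_i\le -t_c\}\cap\{g_k\le -t_c\}$, strict descent at $w\in S_{i,k}$ would finish the proof. But that containment is the entire content of the lemma, and nothing in the proposal establishes it. Convexity of $(t,\bm{\beta},\bm{g})$ at $\beta_{i,j_1}(t)$ only puts $D(t)$ inside $\{g_i<-t\}$ on a neighbourhood whose size tends to $0$ as the neighbouring vertices collapse onto $w$. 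So ``the other functions only cut the region down further'' assumes the uniform local picture you are trying to prove.

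The local data at $w$ cannot decide it either. With the arcs ordered $g_i$ then $g_k$ and the gradients pointing to the right, the corner at $w$ is convex or reflex according to the sign of $\det(\nabla g_i(w),\nabla g_k(w))$. Strict descent only tells you this determinant is nonzero away from $m_{i,k}$, not what its sign is at the point where the collapse happens. For instance, take $m=1$, and suppose the counterclockwise rotations carrying $\nabla g_i(w)$ to $\nabla g_{j_1}(w)$ and $\nabla g_{j_1}(w)$ to $\nabla g_k(w)$ have angles $a_1,a_2\in(0,\pi)$ with $a_1+a_2\in(\pi,2\pi)$. Both corners of the chain are then convex, and every pair of level curves crosses transversally at $w$. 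So nothing local in strict descent or in Corollary~\ref{geo_level} is violated. Yet the merged corner is a right turn, and $D(t_c)$ is locally $\{g_i<-t_c\}\cup\{g_k<-t_c\}$, not the intersection.

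\textbf{The missing idea: simplicity of the loops before $t_c$.} What rules this configuration out is that, for $t$ slightly below $t_c$, the outgoing $g_k$-arc would have to cross the incoming $g_i$-arc at a point of $S_{i,k}$ near $w$. That contradicts $(t,\bm{\beta},\bm{g})$ being a Jordan curve. You need this simplicity, together with the positivity of the turn at each collapsing vertex, and it is exactly what the paper's proof supplies:
\begin{enumerate}
\item It replaces the collapsing stretch by the polygonal path $\mathcal{L}_t$ through $\alpha_0(t),\beta_1(t),\dots,\beta_{K-1}(t),\alpha_K(t)$.
\item A uniform version of Lemma~\ref{cuts} shows that $\mathcal{L}_t$ turns left by at least $\varepsilon_0$ at every vertex.
\item Since the simple arc $\mathcal{C}_t$ lies to the left of $\mathcal{L}_t$, the total left turn is capped by $\pi$.
\item In the limit the merged vertex turns left by at least $(K-1)\varepsilon_0$ and at most $\pi$, so its internal angle lies in $[0,\pi)$.
\end{enumerate}

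Your suggested repair through Corollary~\ref{geo_level} does not substitute for this. Pairwise intersection counts do not prevent the chain from over-turning; the example above respects them. Moreover, when $w=m_{i,k}$ one has $t_c=t_{i,k}$, so $\beta_{i,k}(t)$ and $J^\beta_{i,k}(t)$ are not defined for $t<t_c$.

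Once the turning bound is in place, your step (B) is a genuine simplification. Strict descent at $w$ excludes the straight angle directly, so the uniform constant $\varepsilon_0$ is no longer needed for strictness.
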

\begin{proof}
    Let $\beta_0,\beta_1,\dots,\beta_{K} \in \bm{\beta}$ be such that $\beta_0(t), \beta_1(t),\dots, \beta_K(t)$ are vertices of $(t_c,\bm{\beta},\bm{g})$ ordered counter-clockwise and \begin{equation*}
        \beta_0(t_c) \neq \beta_1(t_c) = \beta_2(t_c)=\cdots = \beta_{K-1}(t_c) \neq \beta_K(t_c).
    \end{equation*} Then there exists $\delta>0$ such that the lengths of the arc between $\beta_0(t)$ and $\beta_1(t)$ and the arc between $\beta_{K-1}(t)$ and $\beta_K(t)$ are lower-bounded by $2\delta$ for $t\in [t_0,t_c]$. We let $\alpha_0(t) = \alpha_0(t,\delta)$ be the point on the arc between $\beta_0(t)$ and $\beta_1(t)$ that is $\delta$ away from $\beta_1(t)$ in arc length and set $\alpha_K(t) = \alpha_K(t,\delta)$ be the point between $\beta_{K-1}(t)$ and $\beta_{K}(t)$ that is $\delta$ away from $\beta_{K-1}(t)$ in arc length (see Figure~\ref{fig:path}). Then by convexity, we have \begin{equation*}
        |\alpha_0(t) - \beta_1(t)| < \delta,\quad \text{and}\quad |\beta_{K-1}(t) - \alpha_K(t)| <\delta \quad \text{for all $t\in [t_0,t_c]$}.
    \end{equation*}
    Now due to the orientation of the regular loops, for any $t\in [t_0,t_c)$, the curve segment $\mathcal{C}_t$ of $(t,\bm{\beta},\bm{g})$ that goes from $\alpha_0(t)$ to $\alpha_K(t)$ lies on the left of the piecewise linear path $\mathcal{L}_t = \mathcal{L}_t(\delta)$ joining $\alpha_0(t), \beta_1(t),\dots,\beta_{K-1}(t)$ and $\alpha_K(t)$. It follows that since $\mathcal{C}_t$ is a simple curve, $\mathcal{L}_t$ cannot turn left in the angle of $\pi$ or enclose a domain in a counter-clockwise manner. 
    \begin{figure}[h!]
        \centering
        \begin{tikzpicture}[>=stealth, line width=1.5pt]
    \colorlet{myred}{red!90!black}
    \colorlet{mygreen}{green!70!black}
    \colorlet{lightgreen}{green!40!white}

    \coordinate (start) at (-4, -1);
    \coordinate (p0)    at (-3.4, 0);
    \coordinate (p1)    at (-1.2, 1);
    \coordinate (p2)    at (0.0, 0.0);
    \coordinate (p3)    at (0.8, 0.8);
    \coordinate (p4)    at (2.1, 0.5);
    \coordinate (p5)    at (3.5, 2);
    \coordinate (end)   at (5.0, 0.7);

    \foreach \s/\e in {p0/p1, p1/p2, p2/p3, p3/p4, p4/p5} {
        \draw[myred, line width=1.8pt, 
              decoration={markings, mark=at position 0.6 with {\arrow{>}}},
              postaction={decorate}] (\s) -- (\e);
    }

    \draw[mygreen, line width=1.8pt] (start) .. controls (-3, 1) and (-2, 1.5) .. (p1);
    \draw[mygreen, line width=1.8pt] (p1) .. controls (-0.8, 1.4) and (-0.2, 0.8) .. (p2);
    \draw[mygreen, line width=1.8pt] (p2) .. controls (0.2, 1.2) and (0.6, 1) .. (p3);
    \draw[mygreen, line width=1.8pt] (p3) .. controls (1.2, 1.6) and (1.6, 1.4) .. (p4);
    \draw[->, mygreen, line width=1.8pt] (p4) .. controls (2.6, 2.4) and (3, 2.8) .. (end);

    \foreach \p in {p0, p1, p2, p3, p4, p5} {
        \fill[myred] (\p) circle (2.5pt);
    }

    \node[anchor=north west, color=myred] at (-3.7, 0) {\large $\alpha_0(t)$};
    \node[anchor=north east, color=myred] at (-0.8, 0.8) {\large $\beta_1(t)$};
    \node[anchor=north west, color=myred] at (-0.2, 0) {\large $\beta_2(t)$};
    \node[anchor=north west, color=myred] at (0.5, 0.6) {\large $\beta_3(t)$};
    \node[anchor=north west, color=myred] at (1.8, 0.5) {\large $\beta_4(t)$};
    \node[anchor=north west, color=myred] at (3, 1.8) {\large $\alpha_5(t)$};
    \node[anchor=north west, color=myred] at (3.5, 2.8) {\huge $\mathcal{L}_t$};

    \node[anchor=south east, color=mygreen] at (6, 0) {\large $\mathcal{C}_t\subseteq(t, \bm{\beta}, \bm{g})$};

\end{tikzpicture}
        \caption{$\mathcal{L}_t$ is highlighted in red and $(t,\bm{\beta},\bm{g})$ is highlighted in green}
        \label{fig:path}
    \end{figure}
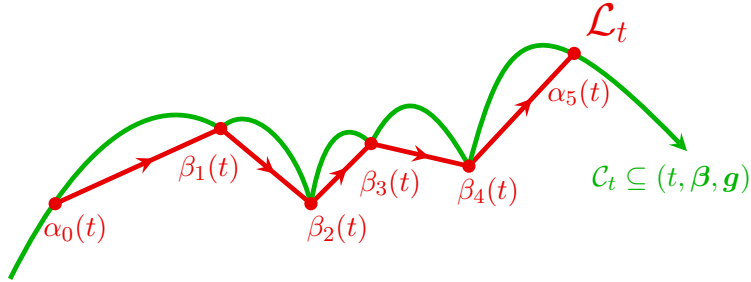
    However, we know that as $\mathcal{L}_t$ evolves continuously over $t$, \begin{enumerate}
        \item the first and last linear segments of $\mathcal{L}_t$ have lower-bounded length, and
        \item all other segments in between vanish.
    \end{enumerate}

    Now we note that the inside of $(t_0,\bm{\beta},\bm{g})$ is precompact, and that $\beta_{i,j}$'s are continuous and $\nabla g_i$'s are smooth. Therefore, by uniform continuity, there exist $\varepsilon_0, \delta_0>0$ such that for all $t\in [t_0,t_c]$, \begin{enumerate}
        \item The angle any $J^\beta_{i,j}(t)$ makes toward $U^\beta_{i,j}(t)$ is upper bounded by $\pi - 2\varepsilon_0$;
        \item For any $J^\beta_{i,j}(t)$ and $z_1, z_2\in J^\beta_{i,j}(t)$ separated by $\beta_{i,j}(t)$, $|z_1 -z_2| \leq 2\delta_0$ implies that the angle of $[z_1,\beta_{i,j}(t)]\cup [\beta_{i,j}(t),z_2]$ is $\varepsilon_0$ away from the angle of $J^\beta_{i,j}(t)$. In particular, the angle of $[z_1,\beta_{i,j}(t)]\cup [\beta_{i,j}(t),z_2]$ would be upper bounded by $\pi - \varepsilon_0$.
    \end{enumerate}

    Now we suppose $t$ is sufficiently close to $t_c$ such that $|\beta_k(t)-\beta_{k+1}(t)| < \delta_0$ for all $k=1,\dots,K-2$. This implies that the sum of left turns for $\mathcal{L}_t$ from $\beta_2(t)$ to $\beta_{K-2}(t)$ is at least $(K-3)\varepsilon_0\geq0$. In particular, this means that the sum of left turns at $\beta_1(t)$ and $\beta_K(t)$ is upper bounded by $\pi -(K-3) \varepsilon_0$, otherwise $\mathcal{L}_t$ will enclose the domain counter-clockwise. Therefore, we can conclude that at $\beta_1(t_c) = \cdots = \beta_{K-1}(t_c)$,  $\mathcal{L}_{t_c}$ cannot make the left turn for more than $\pi -(K-3)\varepsilon_0$.
    
    On the other hand, if we set $\delta$ in the definition of $\alpha_0$ and $\alpha_K$ to be less than $\delta_0$, then the left turns of $\mathcal{L}_t$ at $\beta_1(t)$ and $\beta_{K-1}(t)$ are also lower-bounded by $\varepsilon_0>0$ given $t$ sufficiently close to $t_c$. As a result, the left turn of $\mathcal{L}_{t_c}$ at $\beta_1(t_c) = \cdots =\beta_{K-1}(t_c)$ is at least $(K-1)\varepsilon_0$. 

    In conclusion, the angle $\mathcal{L}_{t_c}$ makes at $\beta_1(t_c)=\cdots = \beta_{K-1}(t_c)$ (equal to $\pi$ minus the left turn angle) is between $(K-3)\varepsilon_0$ and $\pi - (K-1)\varepsilon_0$. Thus the internal angle of $(t_c,\bm{\beta},\bm{g})$, which is upper-bounded by the angle of $\mathcal{L}_{t_c}$, is less than $\pi - (K-1)\varepsilon_0$.
    
    Thus, we rewrite $(t_c,\bm{\beta},\bm{g})$ as $(t_c,\bm{\beta}_c,\bm{g}_c)$ by removing degenerate arcs. The new tuple $(t_c,\bm{\beta}_c,\bm{g}_c)$ would satisfy Condition~$\eqref{acute}$.
\end{proof}

\begin{proof}[Proof of Proposition~\ref{breakdown}]
    Assume that $(t_c,\bm{\beta},\bm{g})$ is not a point. Note that since $\set{(t,\bm{\beta},\bm{g})}_{t\in[t_0,t_c)}$ is a family of continuously shrinking Jordan curves, Carathéodory kernel theorem implies that $(t_c,\bm{\beta},\bm{g})$ has a simply connected outside on $\hat{\C}$ and that its inside, which combined with the loop, is also simply connected. In particular, since $\nabla  g_i$'s are continuous inside the polygon, $\nabla g_j$ should still point toward the outside of $(t_c,\bm{\beta},\bm{g})$ on any non-degenerate $(t_c,\beta_{i,j},g_j,\beta_{j,k})$ and $\bm{g}$ is still ordered in a counter-clockwise fashion. As a consequence, on each non-degenerate arc $(t_c,\beta_{i,j},g_j,\beta_{j,k})$, going from $\beta_{i,j}(t_c)$ to $\beta_{j,k}(t_c)$, $\nabla g_j$ points to the right of the curve. Therefore, by Lemma~\ref{nondeg}, we can re-write $(t_c,\bm{\beta},\bm{g})$ as a new loop $(t_c,\bm{\beta}_c,\bm{g}_c)$ in such a way that none of its arcs are degenerate and Conditions~$\eqref{acute}$ holds. Then if the new loop is a Jordan curve, it has to be regular per Definition~\ref{reg_loop}.
    
    Now suppose that $(t_c,\bm{\beta}_c,\bm{g}_c)$ is not a Jordan curve. Carathéodory kernel theorem then implies that there exists a conformal map $f:\D \to \hat{\C}$ that is continuous on $\overline{\D}$ such that \begin{equation*}
        f(\partial \D) = (t_c,\bm{\beta}_c,\bm{g}_c),\quad f(\D) = \bigcup_{t\in [t_0,t_c)} \text{exterior of $(t,\bm{\beta},\bm{g})$}.
    \end{equation*}
    Then $\theta \mapsto f(e^{-i\theta})$ is a counter-clockwise parametrization of $(t_c,\bm{\beta}_c,\bm{g}_c)$. Since each arc of $(t_c,\bm{\beta}_c,\bm{g}_c)$ only intersects each other finitely many times, the equivalence relation induced by $f$ on $\partial \D$ has only finitely many non-singleton equivalence classes $\set{[z_1], [z_2],\dots,[z_n]}$. This equivalance relation is non-crossing. This implies that $(t_c,\bm{\beta}_c,\bm{g}_c)$ along with its inside is simply connected and consists of finitely many Jordan domains intersecting each other only at $f([z_i])$. Note that given $t\in [t_0,t_c)$, the external angle of the loop $(t,\bm{\beta},\bm{g})$ going counter-clockwise is $\pi$ almost everywhere and always not less than $\pi$ according to Lemma~\ref{cuts}. By the continuity of $\beta_{i,j}$'s and $\nabla g_i$'s, this also holds when $t=t_c$. Therefore, at each $f([z_i]) \in (t_c,\bm{\beta}_c,\bm{g}_c)$ where the curve has multiple angles facing outside of $(t_c,\bm{\beta}_c,\bm{g}_c)$, the externals angle can only be $\pi$ and thus $|[z_i]|=2$. Therefore, if two Jordan curves in $(t_c,\bm{\beta}_c,\bm{g}_c)$ intersect at $a$, the internal angles of both at $a$ should be $0$ and the counter-clockwise curve parametrizing either of them will turn left in an angle of $\pi$. 

    In conclusion, for each Jordan curve in $(t_c,\bm{\beta}_c,\bm{g}_c)$, \begin{enumerate}
        \item when parametrized counter-clockwise, the curve turns left for less than $\pi$ at $\beta_{i,j}(t_c)$ for $\beta_{i,j}\in \bm{\beta}_c$ and turns right everywhere else, and
        \item at the point where the curve intersects another Jordan curve, it turns left in the angle of $\pi$.
    \end{enumerate} 
    Since each Jordan curve consists of segments of level sets $g_i^{-1}(-t_c)$'s, this implies that they are regular loops and that the points where they intersect are vertices.
\end{proof}

From the proof of Proposition~\ref{breakdown}, we can also infer the following important property of the decomposition of a critical loop:

\begin{corollary}[Non-crossing matching]\label{non-cross}
    Let $(t_c,\bm{\beta}_1,\bm{g}_1),\dots,(t_c,\bm{\beta}_l,\bm{g}_l)$ be the set of regular loops that make up the critical loop $(t_c,\bm{\beta},\bm{g})$. Suppose each element of $\bm{g} \in \set{g_1,\dots,g_n}^m$ is distinct and we identify each element in $\bm{g}$ with a vertex in a convex $m$-gon $P$ in a counter-clockwise manner. Then each tuple $\bm{g}_i$ represents a sub-polygon $P_i$ inside $P$. Moreover, $P_1,\dots, P_l$ have disjoint interior (see Figure~\ref{fig:non-cross}).
\end{corollary}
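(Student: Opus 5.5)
We extract the statement from the structure found in the proof of Proposition~\ref{breakdown}: the Carathéodory map $f:\D\to\hat\C$ with $f(\partial\D)=(t_c,\bm{\beta}_c,\bm{g}_c)$, and the non-crossing equivalence relation it induces on $\partial\D$. Parametrize the critical loop counter-clockwise by $\theta\mapsto f(e^{-i\theta})$. Along this parametrization the arcs appear in the cyclic order prescribed by $\bm{g}$, because the degenerate arcs removed in Lemma~\ref{nondeg} have length zero and do not change the order of the remaining ones. Since the entries of $\bm{g}$ are distinct, each $g_k$ contributes at most one arc of $\partial\D$. This gives an order-preserving identification of the arcs of $\partial\D$ (equivalently, the vertices of the convex $m$-gon $P$) with the elements of $\bm{g}$. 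The vertices of $P$ belonging to degenerate arcs are simply absent from every $P_i$, which is harmless.

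\textbf{Step 1.} Describe each component loop $(t_c,\bm{\beta}_i,\bm{g}_i)$ in terms of $\partial\D$. The proof of Proposition~\ref{breakdown} shows that the pinch points $f([z_1]),\dots,f([z_n])$ are vertices of the loop, and each class $[z_k]$ has exactly two elements. Cutting $\partial\D$ at all the $z$'s and regluing along these pairs divides the circle into finitely many cyclic pieces. Each piece maps onto one Jordan curve, which is one of the regular loops. The arcs of that loop are exactly the arcs of $\partial\D$ lying in the piece, and they keep their counter-clockwise order. So $\bm{g}_i$ is a sub-sequence of $\bm{g}$ that is cyclically ordered in the same way, and $P_i$ is the convex hull of the corresponding vertices of $P$, i.e.\ a sub-polygon of $P$.

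\textbf{Step 2.} Show that the interiors are disjoint. Each pair $[z_k]$ is a chord of $\D$, and the set of chords is non-crossing, since $f$ is injective on $\D$ and the relation it induces is non-crossing, as noted in the proof of Proposition~\ref{breakdown}. These chords cut $\overline\D$ into regions $R_1,\dots,R_l$ with disjoint interiors, one for each $P_i$. The map sending a point of $\partial\D$ to the corresponding vertex of $P$ is cyclic-order preserving. Under it, each chord $\{z,z'\}$ goes to a segment of $P$ that separates the arcs lying on one side of the chord from those on the other. Because the chords do not cross, neither do their images. Hence each $P_i$ lies inside the image of $R_i$, and these images are the pieces of a non-crossing dissection of $P$, so they have disjoint interiors.

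\textbf{Main obstacle.} The difficulty is bookkeeping at a pinch point, where two regular loops meet at a shared vertex $\beta_{j,k}(t_c)$. That vertex sits between two arcs, and one has to decide which arcs belong to which side. I would settle it with the angle information already proved: at a pinch the internal angle of each loop is $0$ and the curve turns left by $\pi$. So the two arcs of $\partial\D$ adjacent to $z$ and the two adjacent to $z'$ pair up across the chord in a definite way. This makes the vertex-to-loop assignment consistent, and it also makes the separation argument in Step~2 valid, with the chord of $P$ drawn between the vertices on the two sides rather than through a shared vertex.
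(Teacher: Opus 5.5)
Your route is the paper's: the Carathéodory map $f$ from the proof of Proposition~\ref{breakdown} induces a non-crossing pairing of finitely many points of $\partial\D$, and you push it forward to a non-crossing dissection of $P$. The paper encodes this as the relation $\approx$, and it also adds chords that cut off runs of degenerate arcs. Dropping those vertices and taking $P_i$ to be a convex hull, as you do, is enough here.

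What is wrong is the picture of a pinch in your ``Main obstacle'' paragraph, and the step you build on it would fail. A pinch point $a=f(z)=f(z')$ is never a junction $\beta_{j,k}(t_c)$ of the critical loop. By Lemma~\ref{nondeg} the loop turns left by a positive angle at every junction, so the exterior sector there opens strictly more than $\pi$. That leaves no room at $a$ for a second exterior sector of opening at least $\pi$. So $z$ and $z'$ are interior points of the parameter intervals of two non-degenerate arcs, of $g_j$ and $g_{j'}$ say. These arcs are tangent at $a$ with antiparallel gradients, hence $a=m_{j,j'}$, and $a$ is a vertex only of the two component loops.

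Consequently the two halves of the $g_j$-arc fall on opposite sides of the chord $\{z,z'\}$, and likewise for $g_{j'}$. So \emph{both} neighbouring regular loops contain arcs of $g_j$ and of $g_{j'}$. Writing $v_k$ for the vertex of $P$ attached to $g_k$, the chord corresponds to the diagonal $v_jv_{j'}$, which is a common edge of the two sub-polygons (the shared diagonal $\bm{g}[2]\bm{g}[6]$ in Figure~\ref{fig:non-cross}). There is therefore no chord of $P$ ``between the vertices on the two sides rather than through a shared vertex'', and the vertex sets of the $P_i$ are not disjoint. For the same reason, in Step~1 the arcs ``lying in a piece'' must be sub-arcs.

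The repair is small, and it is what Step~2 actually needs: separation in the closed sense. Two distinct regions $R_i, R_{i'}$ are separated by some chord $\{z,z'\}$ as above. The boundary points of $R_i$ on $\partial\D$ lie in one closed arc between $z$ and $z'$. So the vertices of $P_i$ belong to the cyclic run $v_j,\dots,v_{j'}$ of vertices of $P$, and those of $P_{i'}$ belong to the complementary run $v_{j'},\dots,v_j$. By convexity of $P$ these runs lie in opposite closed half-planes of the line through $v_j$ and $v_{j'}$, so $P_i$ and $P_{i'}$ have disjoint interiors.

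The same cyclic-order argument shows that no $g_k$ occurs twice in one $\bm{g}_i$: two sub-arcs of one arc are separated by the chord at any cut point between them. So each $P_i$ is a genuine sub-polygon. Finally, the pairing of half-arcs across a chord is forced by the cyclic order on $\partial\D$ alone. The angle information is needed only, as in the paper, to get $|[z]|=2$ and to place the pinch in the interior of two arcs.
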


    \begin{figure}[ht]
        \centering
        \begin{tikzpicture}[scale=1.2, line width=1.7pt] 
        \begin{scope}
            \fill[orange, opacity=0.4] plot[smooth, tension=1] coordinates {(-0.8,0.25) (-0.6,0.1)(-0.5,-0.2)} --  plot[smooth, tension=1] coordinates {(-0.5,-0.2) (-0.1,0.05) (0.3,0.1)} -- plot[smooth, tension=1] coordinates {(0.3,0.1) (0.1, 0.5) (0,1)} -- plot[smooth, tension=1] coordinates {(0,1) (-0.2,0.4) (-0.8,0.25)} -- cycle;

            \fill[magenta, opacity=0.4] plot[smooth, tension=1] coordinates {(0.3,0.1) (0.6,-0.1) (0.8,-0.4)}-- plot[smooth, tension=1] coordinates {(0.8,-0.4) (0.9,-0.2) (1.2,0)} -- plot[smooth, tension=1] coordinates {(1.2,0) (0.75, -.03) (0.3,0.1)} -- cycle;
            
            \draw[blue] plot[smooth, tension=1] coordinates {(-1,-1) (-0.8,-0.6) (-0.6, -0.3) (-0.5,-0.2)};
            \draw[blue] plot[smooth, tension=1] coordinates {(1,-1) (0.9,-0.7) (0.8,-0.4)};
            \draw[blue] plot[smooth, tension=1] coordinates {(2,0) (1.6,0.05) (1.2,0)};
            \draw[blue] plot[smooth, tension=1] coordinates {(0.8,1.8) (0.6,1.6) (0.4,1.2) (0,1)};
            \draw[blue] plot[smooth, tension=1] coordinates {(0,2) (0.1,1.5) (0,1)};
            \draw[blue] plot[smooth, tension=1] coordinates {(-1.6,0.5) (-1.3,0.4) (-1,0.4) (-0.8,0.25)};

            \draw plot[smooth, tension=1] coordinates {(-1,-1) (0,-0.5) (1,-1)};
            \draw plot[smooth, tension=1] coordinates {(1,-1) (1.2, -0.3) (2,0)}; 
            \draw plot[smooth, tension=1] coordinates {(2,0) (1.1,0.6) (0.8,1.8)};
            \draw plot[smooth, tension=1] coordinates {(0.8,1.8) (0.4,1.7) (0,2)};
            \draw plot[smooth, tension=1] coordinates {(0,2) (-0.6,1) (-1.6,0.5)};
            \draw plot[smooth, tension=1] coordinates {(-1.6,0.5) (-1.1,0) (-1,-1)};
            
            \draw[teal] plot[smooth, tension=1] coordinates {(-0.5,-0.2) (0.3,0.1) (0.8,-0.4)};
            \draw[teal] plot[smooth, tension=1] coordinates {(0.8,-0.4) (0.9,-0.2) (1.2,0)}; 
            \draw[teal] plot[smooth, tension=1] coordinates {(1.2,0) (0.3,0.15) (0,1)};
            \draw[teal] plot[smooth, tension=1] coordinates {(0,1) (-0.2,0.4) (-0.8,0.25)};
            \draw[teal] plot[smooth, tension=1] coordinates {(-0.8,0.25) (-0.6,0.1)(-0.5,-0.2)};
            
            \draw[-stealth] (-1.4,0) -- (-0.6,0);
            \node[teal, left] at (-1.4,0) {\large $(t_c,\bm{\beta},\bm{g})$};
            \draw[-stealth] (-0.5,2) -- (-0.5,1.2);
            \node[above] at (-0.5,2) {\large $(t_0,\bm{\beta},\bm{g})$};
            \node at (-1.3,-0.4) {$\bm{g}[5]$};
            \node at (0,-0.8) {$\bm{g}[6]$};
            \node at (1.4,-0.5) {$\bm{g}[1]$};
            \node at (1.5,0.7) {$\bm{g}[2]$};
            \node at (0.4,2.1) {$\bm{g}[3]$};
            \node at (-1,1.1) {$\bm{g}[4]$};
        \end{scope}

        \begin{scope}[shift={(4.5,0.5)}, scale=0.8]
            \fill[orange, opacity=0.4] ({2*cos(30)},{2*sin(30)}) -- ({2*cos(150)},{2*sin(150)}) -- ({2*cos(210)},{2*sin(210)}) -- ({2*cos(270)},{2*sin(270)}) -- cycle;
            \fill[magenta, opacity=0.4] ({2*cos(270)},{2*sin(270)}) -- ({2*cos(330)},{2*sin(330)}) -- ({2*cos(30)},{2*sin(30)}) -- cycle;
            
            \draw ({2*cos(30)},{2*sin(30)}) -- ({2*cos(90)},{2*sin(90)}) -- ({2*cos(150)},{2*sin(150)}) -- ({2*cos(210)},{2*sin(210)}) -- ({2*cos(270)},{2*sin(270)}) -- ({2*cos(330)},{2*sin(330)}) --cycle;

            \node[right] at ({2*cos(30)},{2*sin(30)}) {$\bm{g}[2]$};
            \node[above] at ({2*cos(90)},{2*sin(90)}) {$\bm{g}[3]$};
            \node[left] at ({2*cos(150)},{2*sin(150)}) {$\bm{g}[4]$};
            \node[left] at ({2*cos(210)},{2*sin(210)}) {$\bm{g}[5]$};
            \node[below] at ({2*cos(270)},{2*sin(270)}) {$\bm{g}[6]$};
            \node[right] at ({2*cos(330)},{2*sin(330)}) {$\bm{g}[1]$};

            \draw ({2*cos(30)},{2*sin(30)}) -- ({2*cos(150)},{2*sin(150)});
            \draw ({2*cos(30)},{2*sin(30)}) -- ({2*cos(270)},{2*sin(270)});

        \end{scope}
        \end{tikzpicture}
        \caption{Identifying the phase transition with a partition of polygon. Note that $\bm{g}[i]$ stands for the $i$-component of $\bm{g}$ (not to be confused with $g_i$). Each regular loop in $(t_c,\bm{\beta},\bm{g})$ corresponds to a sub-polygon in the partition (color matched).}
        \label{fig:non-cross}
    \end{figure}
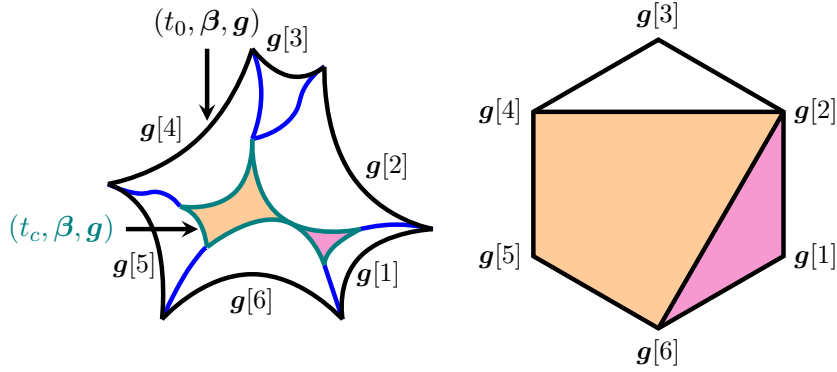

\begin{proof}
    Note that we have a conformal map $f:\D \to \hat{\C}$ continuous on $\overline{\D}$ such that \begin{align*}
        &f(\partial \D) = (t_c,\bm{\beta},\bm{g}),\quad
        f(\D) =  \text{exterior of $(t_c,\bm{\beta},\bm{g})$},\\
        &|f^{-1}(z)| \in \set{1,2},\quad \text{for all $z\in (t_c,\bm{\beta},\bm{g})$}.
    \end{align*}
    Additionally, the equivalence relation on $\partial \D$ induced by $f$ is non-crossing. Since each element in $\bm{g}$ is unique, we can denote their corresponding arcs as $C_1,\dots,C_m$ in order. Let $F: [0,2\pi]$ be a reparametrization of $\theta \mapsto f(e^{-i\theta})$ such that \begin{equation*}
        F(\left[\frac{2\pi(k-1)}{m}, \frac{2\pi k}{m}\right]) = C_k.
    \end{equation*} Note that since some of  $C_k$'s might be degenerate, this reparametrization is not reversible. That is, there exists a increasing function $r$ such that \begin{equation*}
        F(\theta) = f(e^{-ir(\theta)}),
    \end{equation*} but $r$ might not be bijective. Nevertheless, since the orientation is preserved, the equivalence relation on $[0,2\pi)$ induced by $F$ is still non-crossing. Note that this means for $a_1,a_2,b_1,b_2 \in [0,2\pi)$ if \begin{equation*}
        a_1 \sim a_2,\quad b_1\sim b_2,\quad a_1<a_2,\quad b_1<b_2,\quad a_1\leq b_1
    \end{equation*} then $a_1\leq b_1<b_2 \leq a_2$. Now let $v: [0,2\pi) \to [0,2\pi)$ be a piecewise linear continuous function such that if $\theta \in [2\pi(k-1)/m, 2\pi k/m]$ is equivalent to some $\theta' \neq \theta$, we have \begin{equation*}
        v(\theta) = 2\pi (k-1)/m.
    \end{equation*} Then non-crossing implies that $v$ is increasing. Let $\approx$ be the equivalence relation on $[0,2\pi)$ induced by $F\circ v$. Then $\approx$ is a non-crossing equivalence relation on $\set{2\pi k/m}_{k=0}^{m-1}$ such that $2\pi k_1/m  \approx  2\pi k_2/m$ if and only if $C_{k_1+1}$ and $C_{k_2+1}$ are two non-degenerate arcs that intersect in their interiors. Now for $k_1<k_2$ we also set $2\pi k_1/m \approx 2\pi k_2/m$, if $C_{k}$ is degenerate for all $k=k_1+2,k_1+2,\dots, k_2$. Since none of the degenerate arcs can intersect other arcs in the interior, the equivalence relation is still non-crossing.
    
    In summary, we have identified each $C_k$ with $2\pi(k-1)/m$. Note that since $C_k$'s follows the order of $\bm{g}$, we label the arcs in the counter-clockwise manner as $k$ increases. For $k_1<k_2$, $C_{k_1}$ is related to $C_{k_2}$ under $\approx$ if they are both non-degenerate arcs and either \begin{enumerate}
        \item $C_{k_1}$ and $C_{k_2}$ intersect in the interiors of both arcs, or
        \item $C_k$ is degenerate for all $k=k_1+1,\dots ,k_2-1$.
    \end{enumerate}
    On the other hand, as a non-crossing matching on $\set{e^{2\pi k/m}}_{k=0}^{m-1}$, $\approx$ partitions the regular $m$-gon $\mathrm{Conv}\set{z|z^m=1}$ into sub-polygons. Then each regular loop in $(t_c,\bm{\beta},\bm{g})$ can be identified with a unique sub-polygon under the partition. The result then follows.
\end{proof}

Since a triangle does not admit a non-crossing matching, it follows that \begin{corollary}\label{tri}
    For a regular loop $(t_0,\bm{\beta},\bm{g})$ with $|\bm{g}| = 3$, $(t_c,\bm{\beta},\bm{g})$ is a single point.
\end{corollary}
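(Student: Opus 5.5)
By Proposition~\ref{breakdown}, the critical loop $(t_c,\bm{\beta},\bm{g})$ is either a single point or a union of regular loops that meet only at their vertices. I would therefore assume it is not a point and derive a contradiction. The two ingredients are a lower bound on the number of arcs in any regular loop, and the combinatorics of Corollary~\ref{non-cross}.

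\textbf{Step 1: every regular loop has at least three arcs.} I would prove this by a turning-number argument.
\begin{enumerate}
\item Traverse a regular loop counter-clockwise. Condition~\eqref{concave} says $\nabla g_j$ points outward on each arc.
\item The level set $g_j^{-1}(-t)$ bounds the convex set $\set{g_j\geq -t}$ (Lemma~\ref{convex}), and the loop's interior lies on the side where $g_j<-t$. Hence the tangent turns to the right, weakly and monotonically, along each arc.
\item The total turning of a counter-clockwise Jordan curve is $2\pi$. The left turns can only occur at the $k$ vertices, and each is strictly less than $\pi$ by Condition~\eqref{acute}.
\item It follows that $2\pi < k\pi$, so $k\geq 3$.
\end{enumerate}

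\textbf{Step 2: the decomposition into pieces.} Identify the three (distinct) entries of $\bm{g}$ with the vertices of a triangle $P$, as in Corollary~\ref{non-cross}. Each regular loop in the decomposition of $(t_c,\bm{\beta},\bm{g})$ then corresponds to a sub-polygon $P_i$ of $P$, and these have disjoint interiors. Two cases arise.
\begin{enumerate}
\item \emph{Two or more pieces.} The partition would have to use a non-crossing diagonal of $P$, and a triangle has none. Equivalently, some $P_i$ would have at most two vertices, that is, a regular loop with at most two arcs, which Step~1 forbids.
\item \emph{Exactly one piece.} This piece must be the whole triangle, so all three arcs of $(t_c,\bm{\beta},\bm{g})$ are non-degenerate and the loop is a Jordan curve. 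This contradicts the lemma preceding Proposition~\ref{breakdown}: at $t_c$ either some arc degenerates (case~\eqref{lem_con1}) or the loop fails to be Jordan (case~\eqref{lem_con2}).
\end{enumerate}
The same conclusion covers the possibility that an arc degenerates. Deleting it as in Lemma~\ref{nondeg} leaves a loop with at most two arcs, and any regular loop it splits into is again excluded by Step~1.

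\textbf{Main obstacle.} The delicate point is making Step~1 rigorous, namely that each arc of a regular loop turns only to the right when the loop is traversed counter-clockwise. This needs the orientation conventions to match: the interior of the loop must lie on the complement side of the convex superlevel set of $g_j$. I would check this against Condition~\eqref{concave} and Lemma~\ref{convex}, and apply the turning (Hopf Umlaufsatz) count for piecewise smooth Jordan curves. The rest is the combinatorial observation already contained in Corollary~\ref{non-cross}.
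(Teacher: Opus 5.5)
Your route is the paper's. The paper derives this corollary in one line from Proposition~\ref{breakdown} and Corollary~\ref{non-cross}, using the fact that a triangle has no diagonal and hence admits no non-crossing matching. Your Step~1 and your one-piece case usefully make explicit two facts that line leaves implicit: a regular loop needs at least three arcs, and a single surviving piece would contradict the definition of $t_c$.

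There is one incorrect step in Step~1. Condition~\eqref{acute} bounds the \emph{internal} angle $\theta\in[0,\pi)$. The left turn at that vertex is therefore $\pi-\theta\in(0,\pi]$, not strictly less than $\pi$. The value $\pi$ genuinely occurs: the proof of Proposition~\ref{breakdown} shows that regular loops meeting at a point have internal angle $0$ there. Lemma~\ref{cuts} likewise gives angle $0$ at $m_{j,k}$ when $t=t_{j,k}$. With only weak right turning on the arcs you get $2\pi\le k\pi$, which does not exclude $k=2$.

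The strictness has to come from the arcs instead. By Lemma~\ref{convex}, each arc is a piece of a \emph{strictly} convex curve, and by Condition~\eqref{pos} it has positive length. So each arc contributes strictly positive right turning, giving $2\pi<\sum(\text{left turns})\le k\pi$ and hence $k\ge 3$. With that repair your argument goes through.

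The distinctness of the three entries of $\bm{g}$, which Corollary~\ref{non-cross} requires, is automatic. In a $3$-cycle every pair of arcs is adjacent, and adjacent entries must differ.
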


\subsection{Building the skeleton}
\begin{definition}[Loop measure]\label{loop_measure}
    Given a loop $(t,\bm{\beta},\bm{g})$ inside a convex polygon $\Omega$, the loop measure $\mu$ of $(t,\bm{\beta},\bm{g})$ is the unique measure such that \begin{equation*}
        \mu (U) = \sum_{(t,\beta_{i,j},g_j,\beta_{j,k})\subseteq (t,\bm{\beta},\bm{g})} \frac{1}{2\pi}\int_{(t,\beta_{i,j},g_j,\beta_{j,k})\cap U} |\nabla g_j(\zeta)| |d\zeta|.
    \end{equation*}
\end{definition}
Note that since we are integrating along the level sets, we have \begin{equation*}
    |\nabla g_j(\zeta)| = \left|\pder{g_j}{\mathbf{n}}(\zeta)\right|, \quad \zeta \in (t,\beta_{i,j},g_j,\beta_{j,k}).
\end{equation*}  The equilibrium measure $\mu_E$ of the polygon is the loop measure of the regular loop \begin{equation*}
    \partial \Omega = (0,(\gamma_{1,2},\gamma_{2,3},\dots,\gamma_{n-1,n},\gamma_{1,n}),(g_2,g_3,\dots,g_n,g_1)).
\end{equation*}

\begin{lemma}\label{replace1}
    Given a regular loop $(t_0,\bm{\beta},\bm{g})$ with loop measure $\mu_0$ and its corresponding critical loop $(t_c,\bm{\beta},\bm{g})$ with loop measure $\mu_c$, if we set $\lambda$ to be a measure supported on the annulus between $(t_0,\bm{\beta},\bm{g})$ and $(t_c,\bm{\beta},\bm{g})$ such that \begin{equation*}
        \lambda(U) = \sum_{\beta_{i,j}\in \bm{\beta}} \int _{\beta_{i,j}([t_0,t_c]) \cap U} |\nabla g_i(\zeta) - \nabla g_j(\zeta)| |d\zeta|,
    \end{equation*} then $U^{\mu_0}$ and $U^{\mu_c+\lambda}$ coincide on the exterior of $(t_0,\bm{\beta},\bm{g})$. That is, $\mu_0$ is the \textit{balayage} of $\mu_c + \lambda$ on  $(t_0,\bm{\beta},\bm{g})$.
\end{lemma}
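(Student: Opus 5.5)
Our plan is to build an explicit function on the annulus and apply Green's formula, in the spirit of Lemma~\ref{subhar}. Let $D_0$ be the closed region bounded by $(t_0,\bm{\beta},\bm{g})$, and $D_c$ the closed region bounded (in the sense of Proposition~\ref{breakdown}) by the critical loop $(t_c,\bm{\beta},\bm{g})$. For $t\in[t_0,t_c]$ the regular loops $(t,\bm{\beta},\bm{g})$ sweep out $A:=D_0\setminus \operatorname{int} D_c$, and they are nested and shrinking by Lemma~\ref{inward}. The annulus $A$ is cut by the curves $\beta_{i,j}([t_0,t_c])$ into sectors, one for each arc. The sector swept by $(t,\beta_{i,j},g_j,\beta_{j,k})$ will be denoted $A_j$.

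Define $h$ on $\C$ by
\begin{equation*}
h=\begin{cases} -t_0 & \text{outside } D_0,\\ g_j & \text{on } A_j,\\ -t_c & \text{on } D_c.\end{cases}
\end{equation*}
On $\partial A_j\cap \beta_{i,j}$ we have $g_i=g_j=-t$, so $h$ is well defined and continuous. Continuity across the outer loop holds because $g_j=-t_0$ there, and across the critical loop because $g_j=-t_c$ there. Also $h=\max_t(\dots)$ locally. On each sector $h$ is harmonic.

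Next we compute the distributional Laplacian $\frac{1}{2\pi}\Delta h$ as a sum of jump terms of normal derivatives.
\begin{itemize}
\item[(i)] On the outer loop, the outside value is constant and the inside value is $g_j$, with $\nabla g_j$ pointing outward (Condition~\eqref{concave}). The jump contributes $-\frac{1}{2\pi}|\nabla g_j|\,|d\zeta|$, which is $-\mu_0$.
\item[(ii)] On the critical loop, $h$ passes from $g_j$ to the constant $-t_c$. Since $\nabla g_j$ points away from $D_c$, this gives $+\mu_c$.
\item[(iii)] On each $\beta_{i,j}([t_0,t_c])$, the switch between $g_i$ and $g_j$ happens across the nodal line $S_{i,j}$ with the correct sides. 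This is exactly the orientation guaranteed by Corollary~\ref{equiv} and Lemma~\ref{cuts}. The density is $\nabla(g_i-g_j)\cdot N$, as in the proof of Lemma~\ref{cuts}.
\end{itemize}
Because $g_i=g_j$ along the curve, the tangential components agree. Hence $\nabla(g_i-g_j)$ is normal, and its normal component equals $|\nabla g_i-\nabla g_j|$ with a nonnegative sign. This term is therefore $\lambda$, up to the normalization $\frac1{2\pi}$, which we will match with the convention of Definition~\ref{loop_measure}. So $\frac{1}{2\pi}\Delta h=\mu_c+\lambda-\mu_0$ as measures.

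Since $h$ is compactly supported modulo a constant, its Laplacian has total mass zero, and the potential of $\frac{1}{2\pi}\Delta h$ equals $-h+\text{const}$. Outside $D_0$, $h$ is constant. Therefore $U^{\mu_c+\lambda}-U^{\mu_0}$ is constant on the exterior of $D_0$. Both potentials behave like $-m\log|z|+o(1)$ at $\infty$ with the same mass $m$, so the constant is $0$. This is the claimed balayage identity.

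The main obstacle is the rigorous justification of the distributional Laplacian computation. The curves are only piecewise analytic. At the vertices, and at the critical loop where several sectors pinch together (non-Jordan points and degenerate arcs from Lemma~\ref{nondeg}), we need Green's formula to hold with no extra point masses. We will handle this by exhausting with the regular loops $(t,\bm{\beta},\bm{g})$ for $t<t_c$, applying Gauss--Green on each piecewise smooth region as in Lemma~\ref{subhar}, and letting $t\uparrow t_c$. The gradients are bounded on the precompact region, so the boundary terms converge, and the finitely many corner points carry no mass.
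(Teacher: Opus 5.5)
Your proposal is correct and is essentially the paper's proof. It uses the same piecewise function ($-t_0$ outside the loop, $g_j$ on the sector swept by the $g_j$-arcs, $-t_c$ inside the critical loop), the same Gauss--Green computation of normal-derivative jumps on the outer loop, the critical loop and the curves $\beta_{i,j}([t_0,t_c])$ (with the sign on the latter coming from Corollary~\ref{equiv} and Lemma~\ref{cuts}), and the same conclusion from constancy outside $(t_0,\bm{\beta},\bm{g})$. Your sign $\frac{1}{2\pi}\Delta h=\mu_c+\lambda-\mu_0$ and your factor $\frac{1}{2\pi}$ in $\lambda$ are the consistent versions of what the paper writes: its Green identity drops a harmless overall minus sign, and its $\lambda$ omits the $\frac{1}{2\pi}$ needed for the masses to balance. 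Your exhaustion by $t\uparrow t_c$ is extra care that the paper skips by applying Gauss--Green directly on each sector.
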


\begin{proof}
    Given an arc $(t_0,\beta_{i,j},g_j,\beta_{j,k})\subseteq (t_0,\bm{\beta},\bm{g})$, we set $SQ(t_0,\beta_{i,j},g_j,\beta_{j,k})$ (see Figure~\ref{fig:web}) to be the Jordan domain enclosed by \begin{equation*}
        (t_0,\beta_{i,j},g_j,\beta_{j,k}) \cup \beta_{i,j}([t_0,t_c]) \cup (t_c,\beta_{i,j},g_j,\beta_{j,k}) \cup \beta_{j,k}([t_0,t_c]).
    \end{equation*}

    \begin{figure}[h!]
        \centering
        \begin{tikzpicture}[scale=1.6, line width=1.7pt] 
            \fill[orange, opacity=0.5] plot[smooth, tension=1] coordinates {(2,0) (1.1,0.6) (0.8,1.8)} -- plot[smooth, tension=1] coordinates {(0.8,1.8) (0.6,1.6) (0.4,1.2) (0,1)}  --plot[smooth, tension=1] coordinates {(0,1) (0.3,0.15)  (1.2,0)}--plot[smooth, tension=1] coordinates {(1.2,0) (1.6,0.05) (2,0)}--cycle;

            \draw[blue] plot[smooth, tension=1] coordinates {(-1,-1) (-0.8,-0.6) (-0.6, -0.3) (-0.5,-0.2)};
            \draw[blue] plot[smooth, tension=1] coordinates {(1,-1) (0.9,-0.7) (0.8,-0.4)};
            \draw[blue] plot[smooth, tension=1] coordinates {(2,0) (1.6,0.05) (1.2,0)};
            \draw[blue] plot[smooth, tension=1] coordinates {(0.8,1.8) (0.6,1.6) (0.4,1.2) (0,1)};
            \draw[blue] plot[smooth, tension=1] coordinates {(0,2) (0.1,1.5) (0,1)};
            \draw[blue] plot[smooth, tension=1] coordinates {(-1.6,0.5) (-1.3,0.4) (-1,0.4) (-0.8,0.25)};

            \draw plot[smooth, tension=1] coordinates {(-1,-1) (0,-0.5) (1,-1)};
            \draw plot[smooth, tension=1] coordinates {(1,-1) (1.2, -0.3) (2,0)}; 
            \draw plot[smooth, tension=1] coordinates {(2,0) (1.1,0.6) (0.8,1.8)};
            \draw plot[smooth, tension=1] coordinates {(0.8,1.8) (0.4,1.7) (0,2)};
            \draw plot[smooth, tension=1] coordinates {(0,2) (-0.6,1) (-1.6,0.5)};
            \draw plot[smooth, tension=1] coordinates {(-1.6,0.5) (-1.1,0) (-1,-1)};
            
            \draw[teal] plot[smooth, tension=1] coordinates {(-0.5,-0.2) (0.3,0.1) (0.8,-0.4)};
            \draw[teal] plot[smooth, tension=1] coordinates {(0.8,-0.4) (0.9,-0.2) (1.2,0)}; 
            \draw[teal] plot[smooth, tension=1] coordinates {(1.2,0) (0.3,0.15) (0,1)};
            \draw[teal] plot[smooth, tension=1] coordinates {(0,1) (-0.2,0.4) (-0.8,0.25)};
            \draw[teal] plot[smooth, tension=1] coordinates {(-0.8,0.25) (-0.6,0.1)(-0.5,-0.2)};
            \node at (1.8,1.3) {\large $(t_0,\beta_{i,j},g_j,\beta_{j,k})$};
            \node[orange] at (2.6,0.5) { $SQ(t_0,\beta_{i,j},g_j,\beta_{j,k})$};
            \draw[-stealth] (1.6,0.5) -- (0.9,0.5);
            \draw[-stealth] (-1.4,0) -- (-0.6,0);
            \node[teal, left] at (-1.4,0) {\large $(t_c,\bm{\beta},\bm{g})$};
        \end{tikzpicture}
        \caption{Given an arc arc $(t_0,\beta_{i,j},g_j,\beta_{j,k})$ from a regular loop, the Jordan domain $SQ(t_0,\beta_{i,j},g_j,\beta_{j,k})$ is the analytic square (or triangle) highlighted in orange}
        \label{fig:web}
    \end{figure}
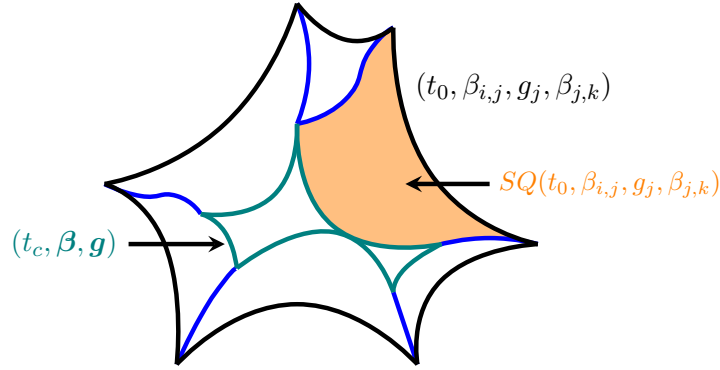
    
    Then the function \begin{equation*}
        u(z) = \begin{cases}
            -t_0,&\text{$z$ is outside $(t_0,\beta_{i,j},g_j,\beta_{j,k})$,}\\
            g_j(z), & z\in SQ(t_0,\bm{\beta},\bm{g}),\\
            -t_c,& \text{$z$ is inside $(t_c,\beta_{i,j},g_j,\beta_{j,k})$},
        \end{cases}
    \end{equation*}
    is continuous on $\C$ and harmonic away from \begin{equation*}
        \bigcup_{(t_0,\beta_{i,j},g_j,\beta_{j,k})\subseteq (t_0,\bm{\beta},\bm{g})} \partial SQ(t_0,\beta_{i,j},g_j,\beta_{j,k}).
    \end{equation*} By the Green-Gauss formula, for any $f\in C^\infty(\C)$, we have \begin{equation}\label{eq12}
        \int_\C f \Delta u = \sum_{ (t_0,\beta_{i,j},g_j,\beta_{j,k})\subseteq (t_0,\bm{\beta},\bm{g})} \int_{\partial SQ(t_0,\beta_{i,j},g_j,\beta_{j,k})} f(\zeta) \pder{g_j}{\mathbf{n}_{sq_j}}(\zeta) |d\zeta|,
    \end{equation} where $\bm{n}_{sq_j}$ is the normal vector on $\partial SQ(t_0,\beta_{i,j},g_j,\beta_{j,k})$ pointing outward. Now on $(t_0,\beta_{i,j},g_j,\beta_{j,k})$, $\nabla g_j$ is the normal vector pointing toward outside of $SQ(t_0,\beta_{i,j},g_j,\beta_{j,k})$ and on $(t_c,\beta_{i,j},g_j,\beta_{j,k})$, $\nabla g_j$ points inward, thus each summand in $\eqref{eq12}$ is equal to\begin{align*}
         \int_{(t_0,\beta_{i,j},g_j,\beta_{j,k})} f(\zeta)|\nabla g_j(\zeta)| |d\zeta| - \int_{(t_c,\beta_{i,j},g_j,\beta_{j,k})}f(\zeta) |\nabla g_j(\zeta)| |d\zeta|\\
         + \int_{\beta_{i,j}([t_0,t_c])}  f(\zeta)\pder{g_j}{\mathbf{n}_{sq_j}}(\zeta) |d\zeta|+\int_{\beta_{j,k}([t_0,t_c])} f(\zeta)\pder{g_j}{\mathbf{n}_{sq_j}}(\zeta) |d\zeta|.
    \end{align*}
    Note that we have \begin{equation*}
        \beta_{i,j}'(t) \cdot (\nabla g_i(\beta_{i,j}(t))- \nabla g_j(\beta_{i,j}(t))) = 0.
    \end{equation*} Thus the fact that the angle of $J^\beta_{i,j}(t)$ makes in the descending direction of $\beta_{i,j}$ is strictlt less than $\pi$ implies that on $\beta_{i,j}([t_0,t_c])$, \begin{equation*}
        \pder{g_i}{\mathbf{n}_{sq_i}}(\zeta) + \pder{g_j}{\mathbf{n}_{sq_j}}(\zeta) = -|\nabla g_i(\zeta) - \nabla g_j(\zeta)| \leq0.
    \end{equation*} In summary, we have \begin{align*}
        \int_\C f \Delta u =&  \sum_{ (t_0,\beta_{i,j},g_j,\beta_{j,k})\subseteq (t_0,\bm{\beta},\bm{g})}\int_{(t_0,\beta_{i,j},g_j,\beta_{j,k})} f(\zeta)|\nabla g_j(\zeta)| |d\zeta| \\
        &- \sum_{ (t_c,\beta_{i,j},g_j,\beta_{j,k})\subseteq (t_c,\bm{\beta},\bm{g})}\int_{(t_c,\beta_{i,j},g_j,\beta_{j,k})}f(\zeta) |\nabla g_j(\zeta)| |d\zeta|\\
         &-\sum_{\beta_{i,j}\in \bm{\beta}} \int_{\beta_{i,j}([t_0,t_c])} f(\zeta) |\nabla g_i(\zeta) - \nabla g_j(\zeta)||d\zeta|.
    \end{align*}
    
    It follows that we have \begin{equation*}
        \Delta u = \mu_0 - \mu_c -\lambda.
    \end{equation*} Since $u$ is constant outside $(t_0,\bm{\beta},\bm{g})$, so is $U^{\Delta u}$. Thus $U^{\Delta u}$ vanishes outside $(t_0,\bm{\beta},\bm{g})$ and the result then follows.
\end{proof}

Given a loop $(t,\bm{\beta},\bm{g})$ either regular or critical, we denote the closure of the inside of the loop by $P(t,\bm{\beta},\bm{g})$.

\begin{lemma}\label{replace2}
    Given a regular loop $(t_0,\bm{\beta},\bm{g})$ and its corresponding critical loop $(t_c,\bm{\beta},\bm{g})$, there exists a retraction \begin{equation*}
        r: P(t_0,\bm{\beta},\bm{g}) \to P(t_c,\bm{\beta},\bm{g}) \cup \bigcup_{\beta_{i,j}\in \bm{\beta}} \beta_{i,j}([t_0,t_c]).
    \end{equation*}
\end{lemma}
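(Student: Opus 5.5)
The plan is to build $r$ piece by piece on the ``analytic squares'' $SQ(t_0,\beta_{i,j},g_j,\beta_{j,k})$ introduced in the proof of Lemma~\ref{replace1}, and to paste the pieces together with the identity on $P(t_c,\bm{\beta},\bm{g})$. I first show that these pieces cover the closed annulus between the two loops and meet only along the curves $\beta_{i,j}([t_0,t_c])$. For each arc $(t_0,\beta_{i,j},g_j,\beta_{j,k})$ of the regular loop, define
\begin{equation*}
\Phi_j:[0,1]\times[t_0,t_c]\to\C,
\end{equation*}
where $\Phi_j(s,t)$ is the point of the arc $(t,\beta_{i,j},g_j,\beta_{j,k})$ at the fraction $s$ of its arc length, measured from $\beta_{i,j}(t)$. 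This map is continuous because the arcs vary continuously in the Hausdorff topology and each is a piece of a level curve of $g_j$ with $\nabla g_j\neq 0$. For $t\in[t_0,t_c)$, the loops $(t,\bm{\beta},\bm{g})$ are nested disjoint Jordan curves by Lemma~\ref{inward} and the lemma following it. Hence $\Phi_j$ is injective on $[0,1]\times[t_0,t_c)$. Moreover, the images of different $\Phi_j$ meet only along their common sides $\{s=1\}$ and $\{s=0\}$, namely the curves $\beta_{j,k}([t_0,t_c])$. Letting $t$ sweep $[t_0,t_c)$, the union of the images fills the region between $(t_0,\bm{\beta},\bm{g})$ and $(t_c,\bm{\beta},\bm{g})$.

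Next I retract each square. Let $\rho:[0,1]\times[t_0,t_c]\to\{t=t_c\}\cup\{s=0\}\cup\{s=1\}$ be radial projection from the point $(1/2,\,t_0-1)$, which lies outside the square. This is a continuous retraction of the square onto its three sides other than the bottom side $\{t=t_0\}$. I then set $r=\Phi_j\circ\rho\circ\Phi_j^{-1}$ on the image of $\Phi_j$.

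The main obstacle is well-definedness and continuity where $\Phi_j$ fails to be injective. This can only happen on the top side $\{t=t_c\}$: the arc may degenerate to a point (case \eqref{lem_con1}), or it may touch other arcs (case \eqref{lem_con2}). But $\rho$ is the identity on that side, so $\Phi_j\circ\rho$ is constant on the fibres of $\Phi_j$. Since $\Phi_j$ is a closed map from a compact space onto a Hausdorff space, it is a quotient map onto its image. Therefore $r$ is well defined and continuous on the closed set $\Phi_j([0,1]\times[t_0,t_c])$.

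Finally, the pieces agree on overlaps, because $r$ is the identity both on $\beta_{j,k}([t_0,t_c])$ and on $(t_c,\bm{\beta},\bm{g})$. Each piece maps into $P(t_c,\bm{\beta},\bm{g})\cup\bigcup\beta_{i,j}([t_0,t_c])$. I extend $r$ by the identity on $P(t_c,\bm{\beta},\bm{g})$. The pasting lemma for finitely many closed sets covering $P(t_0,\bm{\beta},\bm{g})$ then gives a continuous retraction onto $P(t_c,\bm{\beta},\bm{g})\cup\bigcup_{\beta_{i,j}\in\bm{\beta}}\beta_{i,j}([t_0,t_c])$.

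One point to check carefully is that the curves $\beta_{i,j}([t_0,t_c])$ do not meet each other before time $t_c$. This follows because, for $t<t_c$, $\beta_{i,j}(t)$ are distinct vertices of a Jordan curve.
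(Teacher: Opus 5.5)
Your proposal is correct and takes the same route as the paper: retract each closed analytic square $\overline{SQ(t_0,\beta_{i,j},g_j,\beta_{j,k})}$ onto its two lateral curves and its inner arc, then paste these retractions with the identity on $P(t_c,\bm{\beta},\bm{g})$. Your parametrization $\Phi_j$, the radial projection, and the quotient-map argument for degenerate top sides fill in details the paper leaves implicit. One small correction: a single $\Phi_j$ is already injective on a nondegenerate top side, since $g_j(\Phi_j(s,t))=-t$ and each arc is simple, so the touching in case (b) is an overlap between different pieces rather than a failure of injectivity, and your final pasting step already handles it.
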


\begin{proof}
    This follows from the fact that for each $(t_0,\beta_{i,j},g_j,\beta_{j,k}) \subseteq (t_0,\bm{\beta},\bm{g})$, we can construct a retraction \begin{equation*}
        r_j: \overline{SQ(t_0,\beta_{i,j},g_j,\beta_{j,k})}  \to \beta_{i,j}([t_0,t_c]) \cup (t_c,\beta_{i,j},g_j,\beta_{j,k}) \cup \beta_{j,k}([t_0,t_c]).
    \end{equation*} We can glue $r_j$'s with the identity map on $P(t_c,\bm{\beta},\bm{g})$ using the pasting lemma.
\end{proof}

\begin{corollary}\label{replace}
    Suppose a measure $\mu$ and a compact set $K$ inside the polygon satisfy that \begin{enumerate}
        \item $U^{\mu}$ and $U^{\mu_E}$ coincide outside of the polygon,
        \item $\supp \mu \subseteq K$,
        \item $K$ is simply connected,
        \item there exists a regular loop $(t_0,\bm{\beta},\bm{g})$ inside $K$ such that $\mu|_{P(t_0,\bm{\beta},\bm{g})}$ is the loop measure of $(t_0,\bm{\beta},\bm{g})$,
        \item for such a regular loop, we have \begin{equation*}
            \overline{ K\backslash P(t_0,\bm{\beta},\bm{g})} \cap P(t_0,\bm{\beta},\bm{g}) \subseteq \set{\beta_{i,j}(t_0): \beta_{i,j}\in \bm{\beta}},
    \end{equation*}
    \end{enumerate}
    Then we can find a new measure $\mu'$ and a compact subset $K' \subseteq K$ strict smaller in area such that  \begin{enumerate}
        \item $\mu'$ and $K'$ coincide with $\mu$ and $K$ respectively on the exterior of $(t_0,\bm{\beta},\bm{g})$,
        \item $\mu'$ and $K'$ satisfy the first three conditions above,
        \item the critical loop $(t_c, \bm{\beta},\bm{g})$ is inside $K'$ and $\mu'|_{P(t_c,\bm{\beta},\bm{g})}$ is the loop measure of $(t_c, \bm{\beta},\bm{g})$,
        \item we have \begin{equation*}
            \overline{ K'\backslash P(t_c,\bm{\beta},\bm{g})} \cap P(t_c,\bm{\beta},\bm{g}) \subseteq \set{\beta_{i,j}(t_c): \beta_{i,j}\in \bm{\beta}},
    \end{equation*}
    \item $\mu'$ is equivalent to $\mathcal{H}^1$ on \begin{equation*}
        K' \cap (P(t_0,\bm{\beta},\bm{g}) \backslash  P(t_c,\bm{\beta},\bm{g})),
    \end{equation*}
    \end{enumerate} 
\end{corollary}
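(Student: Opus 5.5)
The plan is to define $\mu'$ and $K'$ by replacing, inside $P(t_0,\bm{\beta},\bm{g})$, the loop measure $\mu_0$ of $(t_0,\bm{\beta},\bm{g})$ with $\mu_c+\lambda$ from Lemma~\ref{replace1}. Concretely, set
\begin{equation*}
    \mu' := \mu|_{\C\backslash P(t_0,\bm{\beta},\bm{g})} + \mu_c + \lambda,
\end{equation*}
and
\begin{equation*}
    K' := \overline{K\backslash P(t_0,\bm{\beta},\bm{g})} \cup P(t_c,\bm{\beta},\bm{g}) \cup \bigcup_{\beta_{i,j}\in\bm{\beta}} \beta_{i,j}([t_0,t_c]).
\end{equation*}
By hypothesis (4), $\mu|_{P(t_0,\bm{\beta},\bm{g})}=\mu_0$. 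Hence $\mu'-\mu=\mu_c+\lambda-\mu_0$, and Lemma~\ref{replace1} says this signed measure has vanishing potential outside $(t_0,\bm{\beta},\bm{g})$. In particular it vanishes outside the polygon, so $U^{\mu'}=U^{\mu}=U^{\mu_E}$ there. Conclusion (1), that $\mu'$ and $K'$ agree with $\mu$ and $K$ off the interior of $(t_0,\bm{\beta},\bm{g})$, holds by construction. For the support, $\supp\lambda$ lies in the arcs $\beta_{i,j}([t_0,t_c])$ and $\supp\mu_c$ lies in $(t_c,\bm{\beta},\bm{g})$, so $\supp\mu'\subseteq K'$. Conclusion (5) follows because on $K'\cap(P(t_0)\backslash P(t_c))$ the measure $\mu'$ is $\lambda$. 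Its density $|\nabla g_i-\nabla g_j|$ against arclength is strictly positive on $\beta_{i,j}$ by the strict descent condition, since $\nabla g_i$ and $\nabla g_j$ are not parallel there.

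Next I would check the containments. First, $K'\subseteq K$ because $P(t_0,\bm{\beta},\bm{g})\subseteq K$ by (4). Second, $K'$ is strictly smaller in area: Lemma~\ref{inward} shows the loops shrink strictly, so the annulus between $(t_0,\bm{\beta},\bm{g})$ and $(t_c,\bm{\beta},\bm{g})$ has positive area, while $K'$ meets it only in finitely many analytic arcs, which have zero area. Conclusion (4) holds because in $K'$ the only pieces touching $P(t_c,\bm{\beta},\bm{g})$ from outside are the curves $\beta_{i,j}([t_0,t_c])$. These meet $P(t_c,\bm{\beta},\bm{g})$ only at their endpoints $\beta_{i,j}(t_c)$, since $\beta_{i,j}(t)$ lies strictly outside the loop at time $t_c$ for $t<t_c$, again by the strict nesting in Lemma~\ref{inward}. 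The old part $\overline{K\backslash P(t_0)}$ touches $P(t_0)$ only at the points $\beta_{i,j}(t_0)$ by (5), and these lie outside $P(t_c)$. Conclusion (3) is immediate from the definition of $\mu'$, since $\lambda$ gives no mass to $P(t_c,\bm{\beta},\bm{g})$ except possibly at finitely many points, where it has no atoms.

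The main obstacle is simple connectivity of $K'$. I would use the retraction $r$ of Lemma~\ref{replace2}, extended by the identity on $\overline{K\backslash P(t_0,\bm{\beta},\bm{g})}$. The two definitions agree on the overlap, which by (5) is contained in the vertices $\beta_{i,j}(t_0)$, and $r$ fixes those vertices because they lie on the curves $\beta_{i,j}([t_0,t_c])$. The pasting lemma then gives a retraction $K\to K'$. A retract of a simply connected space is simply connected, since the fundamental group of the retract injects into that of the ambient space. It remains to check that $K'$ is compact: $P(t_c,\bm{\beta},\bm{g})$ is compact, being the closure of the inside of the loop, which is simply connected by the Carath\'eodory kernel argument in Proposition~\ref{breakdown}. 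The rest of $K'$ is a finite union of compact arcs together with a closed piece of $K$.
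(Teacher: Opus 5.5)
Your proposal is correct and follows essentially the same route as the paper. You use the same replacement $\mu'=\mu|_{\C\backslash P(t_0,\bm{\beta},\bm{g})}+\mu_c+\lambda$ justified by Lemma~\ref{replace1}, the same $K'$, simple connectivity via the retraction of Lemma~\ref{replace2}, and positivity of the density $|\nabla g_i-\nabla g_j|$ from strict descent. You also fill in details the paper leaves implicit: taking $\overline{K\backslash P(t_0,\bm{\beta},\bm{g})}$ so that $K'$ is compact, pasting the retraction with the identity along the vertices $\beta_{i,j}(t_0)$, and the nesting, area and no-atom arguments behind conclusions (3) and (4).
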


\begin{figure}[h!]
        \centering
        \begin{tikzpicture}[scale=0.8, line width=1.5pt]
            \begin{scope}
            \fill[gray, opacity=0.5] plot[smooth, tension=1] coordinates {(-1,-1) (0,-0.5) (1,-1)} -- plot[smooth, tension=1] coordinates {(1,-1) (1.2, -0.3) (2,0)} -- plot[smooth, tension=1] coordinates {(2,0) (1.1,0.6) (0.8,1.8)} -- plot[smooth, tension=1] coordinates {(0.8,1.8) (0.4,1.7) (0,2)} -- plot[smooth, tension=1] coordinates {(0,2) (-0.6,1) (-1.6,0.5)} -- plot[smooth, tension=1] coordinates {(-1.6,0.5) (-1.1,0) (-1,-1)} -- cycle;

            \fill[gray, opacity=0.5] plot[smooth, tension=1] coordinates {(-3,1) (-2.3,0.6) (-1.6,0.5)}
            -- plot[smooth, tension=1] coordinates {(-1.6,0.5) (-2.3,0.4)  (-3,0)} -- cycle;

            \draw[blue] plot[smooth, tension=1] coordinates {(-1,-1) (-0.8,-0.6) (-0.6, -0.3) (-0.5,-0.2)};
            \draw[blue] plot[smooth, tension=1] coordinates {(1,-1) (0.9,-0.7) (0.8,-0.4)};
            \draw[blue] plot[smooth, tension=1] coordinates {(2,0) (1.6,0.05)  (1.2,0)};
            \draw[blue] plot[smooth, tension=1] coordinates {(0.8,1.8) (0.6,1.6) (0.4,1.2) (0,1)};
            \draw[blue] plot[smooth, tension=1] coordinates {(0,2) (0.1,1.5) (0,1)};
            \draw[blue] plot[smooth, tension=1] coordinates {(-1.6,0.5) (-1.3,0.4) (-1,0.4) (-0.8,0.25)};

            \draw plot[smooth, tension=1] coordinates {(-2,-2) (-1.4,-1.6) (-1,-1)};
            \draw plot[smooth, tension=1] coordinates {(2,-2) (1.4,-1.6) (1,-1)};
            \draw plot[smooth, tension=1] coordinates {(3,0) (2.5,-0.1) (2,0) };
            \draw plot[smooth, tension=1] coordinates {(1.9,3) (1.45,2.3) (0.8,1.8)};
            \draw plot[smooth, tension=1] coordinates {(0,4) (0.2,3) (0,2)};
            \draw plot[smooth, tension=1] coordinates {(-2,3) (-0.9,2.6) (0,2)};
            \draw plot[smooth, tension=1] coordinates {(-3,1) (-2.3,0.6) (-1.6,0.5)};
            \draw plot[smooth, tension=1] coordinates {(-3,0) (-2.3,0.4) (-1.6,0.5)};

            \draw[red, line width=2pt] plot[smooth, tension=1] coordinates {(-1,-1) (0,-0.5) (1,-1)};
            \draw[red, line width=2pt]  plot[smooth, tension=1] coordinates {(1,-1) (1.2, -0.3) (2,0)}; 
            \draw[red, line width=2pt]  plot[smooth, tension=1] coordinates {(2,0) (1.1,0.6) (0.8,1.8)};
            \draw[red, line width=2pt]  plot[smooth, tension=1] coordinates {(0.8,1.8) (0.4,1.7) (0,2)};
            \draw[red, line width=2pt]  plot[smooth, tension=1] coordinates {(0,2) (-0.6,1) (-1.6,0.5)};
            \draw[red, line width=2pt]  plot[smooth, tension=1] coordinates {(-1.6,0.5) (-1.1,0) (-1,-1)};
            
            \draw[blue] plot[smooth, tension=1] coordinates {(-0.5,-0.2) (0.3,0.1)
            (0.8,-0.4)};
            \draw[blue] plot[smooth, tension=1] coordinates {(0.8,-0.4) (0.9,-0.2) (1.2,0)}; 
            \draw[blue] plot[smooth, tension=1] coordinates {(1.2,0) (0.3,0.15) (0,1)};
            \draw[blue] plot[smooth, tension=1] coordinates {(0,1) (-0.2,0.4) (-0.8,0.25)};
            \draw[blue] plot[smooth, tension=1] coordinates {(-0.8,0.25) (-0.6,0.1)(-0.5,-0.2)};
            \end{scope}

            \begin{scope}[shift={(8, 0)}]
            
            \fill[gray, opacity=0.5] plot[smooth, tension=1] coordinates {(-3,1) (-2.3,0.6) (-1.6,0.5)}
            -- plot[smooth, tension=1] coordinates {(-1.6,0.5) (-2.3,0.4)  (-3,0)} -- cycle;
            
            \draw plot[smooth, tension=1] coordinates {(-1,-1) (-0.8,-0.6) (-0.6, -0.3) (-0.5,-0.2)};
            \draw plot[smooth, tension=1] coordinates {(1,-1) (0.9,-0.7) (0.8,-0.4)};
            \draw plot[smooth, tension=1] coordinates {(2,0) (1.6,0.05)  (1.2,0)};
            \draw plot[smooth, tension=1] coordinates {(0.8,1.8) (0.6,1.6) (0.4,1.2) (0,1)};
            \draw plot[smooth, tension=1] coordinates {(0,2) (0.1,1.5) (0,1)};
            \draw plot[smooth, tension=1] coordinates {(-1.6,0.5) (-1.3,0.4) (-1,0.4) (-0.8,0.25)};

            \draw plot[smooth, tension=1] coordinates {(-2,-2) (-1.4,-1.6) (-1,-1)};
            \draw plot[smooth, tension=1] coordinates {(2,-2) (1.4,-1.6) (1,-1)};
            \draw plot[smooth, tension=1] coordinates {(3,0) (2.5,-0.1) (2,0) };
            \draw plot[smooth, tension=1] coordinates {(1.9,3) (1.45,2.3) (0.8,1.8)};
            \draw plot[smooth, tension=1] coordinates {(0,4) (0.2,3) (0,2)};
            \draw plot[smooth, tension=1] coordinates {(-2,3) (-0.9,2.6) (0,2)};
            \draw plot[smooth, tension=1] coordinates {(-3,1) (-2.3,0.6) (-1.6,0.5)};
            \draw plot[smooth, tension=1] coordinates {(-3,0) (-2.3,0.4) (-1.6,0.5)};

            \draw[dashed] plot[smooth, tension=1] coordinates {(-1,-1) (0,-0.5) (1,-1)};
            \draw[dashed] plot[smooth, tension=1] coordinates {(1,-1) (1.2, -0.3) (2,0)}; 
            \draw[dashed] plot[smooth, tension=1] coordinates {(2,0) (1.1,0.6) (0.8,1.8)};
            \draw[dashed] plot[smooth, tension=1] coordinates {(0.8,1.8) (0.4,1.7) (0,2)};
            \draw[dashed] plot[smooth, tension=1] coordinates {(0,2) (-0.6,1) (-1.6,0.5)};
            \draw[dashed] plot[smooth, tension=1] coordinates {(-1.6,0.5) (-1.1,0) (-1,-1)};
            
            \fill[gray, opacity=0.5] plot[smooth, tension=1] coordinates {(-0.5,-0.2) (0.3,0.1) (0.8,-0.4)} -- plot[smooth, tension=1] coordinates {(0.8,-0.4) (0.9,-0.2) (1.2,0)} -- plot[smooth, tension=1] coordinates {(1.2,0) (0.3,0.2) (0,1)} -- plot[smooth, tension=1] coordinates {(0,1) (-0.2,0.4) (-0.8,0.25)} -- plot[smooth, tension=1] coordinates {(-0.8,0.25) (-0.6,0.1)(-0.5,-0.2)} -- cycle;

            \draw[red,line width=2pt] plot[smooth, tension=1] coordinates {(-0.5,-0.2) (0.3,0.1) (0.8,-0.4)};
            \draw[red,line width=2pt] plot[smooth, tension=1] coordinates {(0.8,-0.4) (0.9,-0.2) (1.2,0)}; 
            \draw[red,line width=2pt] plot[smooth, tension=1] coordinates {(1.2,0) (0.3,0.2) (0,1)};
            \draw[red,line width=2pt] plot[smooth, tension=1] coordinates {(0,1) (-0.2,0.4) (-0.8,0.25)};
            \draw[red,line width=2pt] plot[smooth, tension=1] coordinates {(-0.8,0.25) (-0.6,0.1)(-0.5,-0.2)};
            \end{scope}
        \end{tikzpicture}
        \caption{Replace $(K,\mu)$ with $(K',\mu')$ where the loop measures on $(t_0,\bm{\beta},\bm{g})$ and  $(t_c,\bm{\beta},\bm{g})$ are highlighted in red}
        \label{fig:replace}
    \end{figure}
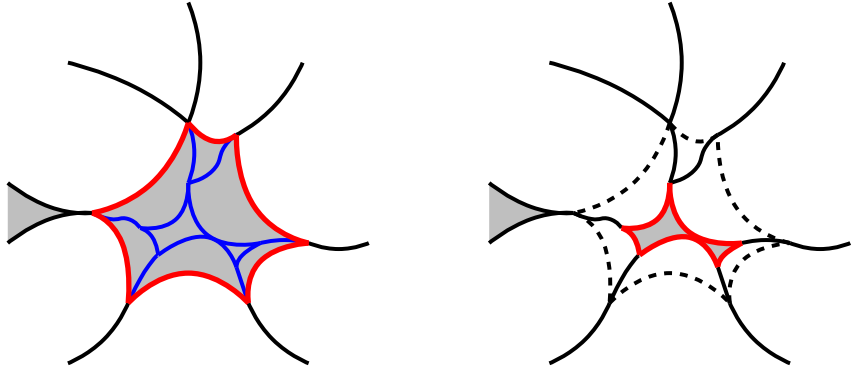

\begin{proof}
    The idea here is illustrated by Figure~\ref{fig:replace}. We replace $\mu$ with \begin{equation*}
        \mu'=\mu|_{P(t_0, \bm{\beta}, \bm{g})^C} + \mu_c+\lambda, \text{where $\mu_c$ and $\lambda$ are measures in Lemma~\ref{replace1}}, 
    \end{equation*} and $K$ with \begin{equation*}
        K'=\left(K\backslash P(t_0,\bm{\beta},\bm{g})\right) \cup P(t_c,\bm{\beta},\bm{g}) \cup \bigcup_{\beta_{i,j}\in \bm{\beta}} \beta_{i,j}([t_0,t_c]).
    \end{equation*} Then $\mu'$ generates the same logarithmic potential as $\mu$ outside $(t_0,\bm{\beta},\bm{g})$ due to Lemma~\ref{replace1}. According to Lemma~\ref{replace2}, we can construct a retraction from $K$ to $K'$, implying $K'$ is simply connected. Moreover, the strict descent condition implies that the density of $\lambda$, which is $|\nabla g_i - \nabla g_j|$, is strictly positive on $\beta_{i,j}([t_0,t_c])$. Thus $\lambda$ is equivalent to $\mathcal{H}^1$ on $\bigcup_{\beta_{i,j}\in \bm{\beta}} \beta_{i,j}([t_0,t_c])$.
\end{proof}

\subsection{Proof of Theorem~\ref{theo_descent}}
Given a convex polygon $\Omega$, we note that $\mu_E$ is the loop measure of \begin{equation*}
    \partial \Omega = (0,(\gamma_{1,2},\gamma_{2,3},\dots,\gamma_{n-1,n},\gamma_{1,n}),(g_2,g_3,\dots,g_n,g_1)).
\end{equation*}
Thus the pair $(\mu_E, \overline{\Omega})$ satisfies the five conditions in Corollary~\ref{replace}, implying that we can replace the pair with a new one $(\mu',K')$ which contains the loop measure of the critical loop \begin{equation}\label{eq67}
    (t_c,(\gamma_{1,2},\gamma_{2,3},\dots,\gamma_{n-1,n},\gamma_{1,n}),(g_2,g_3,\dots,g_n,g_1)).
\end{equation} Note that if this critical loop is not a point, then it is a union of regular loops intersecting each other only at the vertices per Proposition~\ref{breakdown}. Moreover, the restriction of the loop measure on each regular loop is, according to Definition~\ref{loop_measure}, the loop measure of the regular loop. Thus if the critical loop in $\eqref{eq67}$ is not a point, $(\mu',K')$ again satisfies the five conditions in Corollary~\ref{replace}. It follows that we can repeatedly apply Corollary~\ref{replace} to get a new pair $(\mu,K)$ where $K$ is a strict subset of the previous one. Note that after each replacement, $K$ is always a union of finitely many $P(t_0,\bm{\beta},\bm{g})$'s and $\beta_{i,j}([t_0,t_c])$'s.

According to Corollary~\ref{non-cross}, each time we replace a regular loop $(t_0,\bm{\beta},\bm{g})$ with its corresponding critical loop $(t_c,\bm{\beta},\bm{g})$, each regular loop inside $(t_c,\bm{\beta},\bm{g})$ (if any) corresponds to a sub-polygon of the polygon represented by $\bm{g}$. Since we cannot partition sub-polygons further than triangles, by Corollary~\ref{tri} the process terminates after finitely many steps where all critical loops we arrive at are degenerate. After the process terminates, $K$ will consist of solely analytic curves of the form $\beta_{i,j}([t_0,t_c])$. And the density of $\mu$ is strictly positive on each curve.

Now we bound the number of analytic curves on the resultant $K$. Note that when we rewrite a critical loop $(t_c,\bm{\beta},\bm{g})$ as a union of regular loops $\set{(t_c,\bm{\beta}_k,\bm{g}_k)}_{k=1}^K$, $\beta_{i,j}$ is in $\bm{\beta}_k$ for some $k$ but not in original $\bm{\beta}$ if and only if either \begin{enumerate}
    \item arcs corresponding to $g_i$ and $g_j$ in $(t_c,\bm{\beta},\bm{g})$ intersect in their interiors, or
    \item arcs corresponding to $g_i$ and $g_j$ in $(t_c,\bm{\beta},\bm{g})$ intersect at their end points (arcs in between become degenerate).
\end{enumerate} 
Each analytic curve in $K$ is either the image of one of $\gamma_{i,i+1}$'s or that of a new $\beta_{i,j}$ added in either case above. Moreover, in the first case, we are adding both a segment of $\gamma_{i,j}$ and of $\eta_{i,j}$ to $K$ but their images are connected at $m_{i,j}$ and thus make up a singular analytic curve. Therefore, when shrinking $(t_0,\bm{\beta},\bm{g})$ to $(t_c,\bm{\beta},\bm{g})$, the number of analytic curves we add into $K$ is upper bounded by the number of matching pairs in $\approx$ per Corollary~\ref{non-cross}. As a result, the total number of arcs added is bounded by the number of non-crossing matching pairs of a convex $n$-gon, which is $n-3$. Therefore, $K$ contains at most $2n-3$ analytic curves.

\bibliographystyle{plain}
\bibliography{bib}

\end{document}